\numberwithin{equation}{section}
\newcommand{\Sp}{\operatorname{Sp}}
\renewcommand\a{\alpha}
\renewcommand\b{\beta}
\def\l{\lambda}
\def\eps{\varepsilon }
\renewcommand\a{\alpha}
\renewcommand\b{\beta}
\newcommand\R{\mathbb R}
\newcommand\C{\mathbb C}
\def\eps{\varepsilon}
\def\l{\lambda}
\newcommand\br{\begin{remark}}
\newcommand\er{\end{remark}}
\newcommand\brs{\begin{remarks}}
\newcommand\ers{\end{remarks}}
\newcommand\bp{\begin{pmatrix}}
\newcommand\ep{\end{pmatrix}}
\newcommand{\be}{\begin{equation}}
\newcommand{\ee}{\end{equation}}
\newcommand\ba{\begin{equation}\begin{aligned}}
\newcommand\ea{\end{aligned}\end{equation}}
\newcommand\ds{\displaystyle}
\newcommand{\bap}{\begin{app}}
\newcommand{\eap}{\end{app}}
\newcommand{\begs}{\begin{exams}}
\newcommand{\eegs}{\end{exams}}
\newcommand{\beg}{\begin{example}}
\newcommand{\eeg}{\end{exaplem}}
\newcommand{\bpr}{\begin{proposition}}
\newcommand{\epr}{\end{proposition}}
\newcommand{\bt}{\begin{theorem}}
\newcommand{\et}{\end{theorem}}
\newcommand{\bc}{\begin{corollary}}
\newcommand{\ec}{\end{corollary}}
\newcommand{\bl}{\begin{lemma}}
\newcommand{\el}{\end{lemma}}
\newcommand{\bd}{\begin{definition}}
\newcommand{\ed}{\end{definition}}
\newcommand{\B }{\mathcal{B}}
\newcommand{\RR}{{\mathbb R}}
\newcommand{\ZZ}{{\mathbb Z}}
\newtheorem{theorem}{Theorem}[section]
\newtheorem{proposition}[theorem]{Proposition}
\newtheorem{corollary}[theorem]{Corollary}
\newtheorem{lemma}[theorem]{Lemma}
\theoremstyle{remark}
\newtheorem{remark}[theorem]{Remark}
\newtheorem{remarks}[theorem]{Remarks}
\theoremstyle{definition}
\newtheorem{definition}[theorem]{Definition}
\newtheorem{example}[theorem]{Example}
\newcommand{\lb}{\label}
\newcommand{\ran}{\text{\rm{ran}}}
\newcommand{\beq}{\begin{equation}}
\newcommand{\eeq}{\end{equation}}
\title{
Diffusive stability 
of spatially periodic solutions of the Brusselator model
}
\author{Alim Sukhtayev}
\address{Indiana University, Bloomington}
\email{alimsukh@iu.edu}
\author{Kevin Zumbrun}
\address{Indiana University, Bloomington, IN 47405}
\email{kzumbrun@indiana.edu} 
\thanks{Research of K.Z. was partially supported
under NSF grants no. DMS-0300487 and DMS-0801745.}
\author{Soyeun Jung}
\address{Kongju National University, Korea}
\email{soyjung@kongju.ac.kr}
\thanks{Research of S.J. was supported by the National Research Foundation of Korea (NRF) grant funded by the
Korea government (MSIP) (No. 2016009978).}
\author{Raghavendra Venkatraman}
\address{Indiana University, Bloomington, IN 47405}
\email{rvenkatr@umail.iu.edu}
\thanks{Research of R.V. was partially supported under
	NSF grants no. DMS-1101290 and DMS-1362879.  }
\begin{document}

\begin{abstract}
Applying the Lyapunov-Schmidt reduction approach introduced by Mielke and Schneider in their analysis of the
fourth-order scalar Swift-Hohenberg equation, we carry out a rigorous small-amplitude stability analysis of Turing
patterns for the canonical second-order system of reaction diffusion equations given by the Brusselator model.
Our results confirm that stability is accurately predicted in the small-amplitude limit by the 
formal Ginzburg Landau amplitude equations, 
rigorously validating the standard weakly unstable approximation and Eckhaus criterion.
\end{abstract}

\date{\today}
\maketitle

\tableofcontents

\section{Introduction}

The topic of pattern formation has been the object of considerable attention since the 
fundamental
observation of Turing \cite{T,C} that reaction diffusion systems 
modeling biological/chemical processes
can spontaneously develop patterns through destabilization of the homogeneous state.  
A parallel impetus has come from the study through bifurcation theory
of hydrodynamic pattern formation phenomena such as Taylor-Couette flow and Rayleigh-B\`enard convection
\cite{KS,NPL,M3}.

Going beyond the question of existence, an equally 
fundamental
topic is stability, or ``selection,'' of
periodic patterns, and linear and nonlinear behavior under perturbation \cite{E,NW,M1,M2,M3,S1,S2,DSSS,SSSU,JZ,JNRZ1,JNRZ2}.
Here, two particular landmarks 
are
the formal ``weakly unstable,'' or small-amplitude, theory of Eckhaus \cite{E} 
deriving the Ginzburg Landau equation as a canonical model for behavior near the threshold of instability in
a variety of processes, and the rigorous linear and nonlinear verification of this theory in \cite{M1,M2,S1}
for the Swift-Hohenberg equation, a canonical model for hydrodynamic pattern formation.

The first-mentioned analysis is completely general, and the second in principle equally so.
Indeed, the passage from spectral to nonlinear stability has by now been established for small- and large-amplitude
patterns alike \cite{S1,S2,JZ,JNRZ1,JNRZ2,SSSU}, with in addition considerable information on modulational behavior.
However, up to now the rigorous characterization of spectral stability has been carried out in all details only for
the particular case of the (scalar) Swift-Hohenberg equation \cite{M1,M2,S1}
\be\label{SH}
\partial_tu= -(1+\partial_x^2)^2 u + \varepsilon^2 u- u^3, \quad u\in \R^1,  
\ee
where $\eps \in \R^1$ is a bifurcation parameter.

The purpose of the present paper is to carry out the program of \cite{M1,M2,S1} also for a system of reaction diffusion equations
\be\label{rd}
u_t = D\partial_x^2 u + f(u, \mu),
\quad u\in \R^n, \, D\in \R^{n\times n}, \, \mu \in \R^1,
\ee
in the case $n=2$ originally considered by Turing, {\it rigorously characterizing spectral stability
in the small-amplitude, or weakly-unstable, limit}.

Specifically, we consider the Brusselator model \cite{PL} 
\ba \label{bruss1}
\partial_t u_1 & = D_1 \partial_x^2u_1 +a -(\b+1)u_1+u_1^2u_2 \\
\partial_t u_2 & =D_2 \partial_x^2 u_2+ \b u_1-u_1^2u_2
\ea
a canonical model for pattern formation in autocatalytic chemical reaction, 
with equilibrium states $u \equiv (a,\beta/a)$.
Here, $u_j\in \R^1$ represent species concentrations,
$D_j\in \R^1$ species diffusion constants and $a$ and $\beta$ ambient concentrations
of precursor species.
As is standard, we consider $a, D_j$ as model parameters and $\beta>0$ a bifurcation parameter,
for concreteness fixing the ``typical'' values $a=2$, $D_1=4$, $D_2=16$ throughout.
The analysis readily generalizes to general values of $a$, $D_j$.

For this model, there is a Turing instability of the equilibrium state at $\beta=4$, with linear oscillating
modes $c e^{\pm ik_0 x}r$, $k_0=1/2$, $r\in \mathbb R^2$.
Thus, setting $\beta=4 + \eps^2$ following standard convention, 
we expect, similarly as for \eqref{SH}, a smooth branch of solutions 
\be\label{branch}
u=(2,2)^\top+\{ \eps e^{i(k_0+ \eps \omega ) x}r + O(\eps^2)\} + c.c.,
\ee
bifurcating from $\eps=0$, where $c.c.$ denotes complex conjugate, and $\omega$ lies in an appropriate
range consisting of an $\eps$-order perturbation of a fixed open interval $I_E$ determined by
an associated formal amplitude equation given in this case by the real Ginzburg Landau equation \cite{E,M1,M2,M3,S1};
moreover, stability and behavior under perturbation of these solutions
are expected to be governed to lowest order by this same Ginzburg Landau equation, with stability determined by
the simple {\it Eckhaus criterion} that $\omega$ lie in an $\eps$-perturbation of a fixed open subinterval
$I_S$ of the interval of existence $I_E$.

Following the Lyapunov-Schmidt reduction program laid out in \cite{M1,M2,S1}, we rigorously validate
\eqref{branch} as describing the unique branch of solutions bifurcating from equilibrium in a neighborhood of the
Turing instability, and give a detailed description of the spectra of the linearized operator about the bifurcating solution,
showing that it agrees to lowest order with that of the linearization of the Ginzburg Landau equation about
\eqref{branch}.
This verifies in particular that stability is indeed predicted by the simple Eckhaus criterion of the formal theory.

The analysis, and computations, turn out to be surprisingly more complicated than in the Swift-Hohenberg case.
In particular, it is here necessary to compute the $\eps^2$-order and $\eps^3$-order correctors
 in \eqref{branch}, whereas in the Swift-Hohenberg case, due to the twin properties that it is scalar with only third-order 
nonlinearities, the $\eps^2$-order term can be seen to identically vanish and the $\eps^3$-order term need not be computed for
the analysis of the reduced equation.
This amounts to computing third-order instead of first-order Taylor expansions, which, in the vectorial case, grow exponentially
in computation effort with degree.
Moreover, it is not a priori clear that the associated new remainder terms in the ultimately resulting $2\times 2$ reduced equations
will be sufficiently small to yield the desired spectral description.
To carry out the details of the program of \cite{M1,M2,S1} in this more generic case, and to verify that the 
argument indeed closes, 
is one of the main contributions of this paper.

A second contribution is to reframe the stability analysis of the Ginzburg Landau equation in a way
illuminating the connection with Lyapunov--Schmidt reduction, for which, under appropriate interpretation/scaling,
the two processes can be seen not only to generate the same final results but to match operation-by-operation.
In the analyses of the Swift--Hohenberg equation in \cite{M1,M2,S1}, 
these final results were instead obtained by apparently quite different computations, then seen
by direct comparison to correspond, leaving unclear the mechanism by which this correspondence 
should extend to more general models.
An interesting further detail arising in the present case that was not present in the Swift--Hohenberg case is that
the linearized equations are not self-adjoint, so that there is a transition between the large-scale spectrum of
the linearized operator, which is in general complex, and the small-scale spectrum, expected by analogy to the approximating
self-adjoint linearized Ginzburg Landau operator to be real.
By a higher order Lyapunov--Schmidt reduction computation,
we are able to pinpoint the location and nature of this transition rather precisely, showing that
within an order $\eps$ ``Ginzburg Landau'' regime, spectra of the full equations 
are real, while in an order $\eps$ transition regime there exist spectra that are complex.
We hope, finally, that it may be useful simply to gather here in one place the elements of the weakly unstable/small-amplitude
theory in the concrete but general context of reaction diffusion systems.

\subsection{Turing instability and the Ginzburg Landau approximation}\label{s:GLapprox}
We first recall the general Ginzburg Landau approximation, following \cite{M3}.
Consider a general reaction diffusion system \eqref{rd} with $D>0$ diagonal and constant, assuming without loss of generality
$f(0,\mu)\equiv 0$, so that $u\equiv 0$ is an equilibrium solution for all $\mu$.
Following Turing, suppose moreover that matrix $M_\mu:=(\partial f/\partial u)(0,\mu)$ is stable (has eigenvalues of strictly negative real part).
(Here and elsewhere, $\Sp(N)$ denotes spectrum of an operator or matrix $N$).
Then, the dispersion relation $\lambda \in \Sp (-k^2 D + M_\mu)$ determined by the Fourier symbol of the linearized operator about $u\equiv 0$,
where $k\in \R$ denotes Fourier frequency, is evidently stable ($\Re \lambda<0$) for $|k|$ sufficiently small or large; thus, instabilities, should
they occur, must occur for finite wave numbers, bounded away from $0$ and $\pm \infty$.
This cannot happen for $n=1$, as $D$ and $M_\mu$ then commute.
However, it can occur for any $n\geq 2$, with appropriate choices of parameters \cite{C}.
Of particular interest is the transition at value $\mu_0$ from stability to instability of the constant solution $u\equiv 0$, at which 
one or more eigenvalues of the symbol $(-k^2 D + M_\mu)$ pass through the imaginary axis for $k=\pm k_0\neq 0$.
In the case $n=2$ considered by Turing, this crossing necessarily occurs at $\lambda=0$ and involves a simple root \cite{C}.

More generally, for an $n$-dimensional system \eqref{rd}, we denote as a {\it Turing instability}
a system \eqref{rd} and values $\mu_0$, $k_0$ for which $\Re \Sp (-Dk^2+M_\mu)<0$ for all $k\in \R$ for $\mu<\mu_0$ but not for $\mu\geq \mu_0$,
with $\Re \Sp(-k^2 D + M_{\mu_0})<0$ except for simple eigenvalues $\lambda=0$ at $k=\pm k_0$, whose real parts
are nondegenerate maxima with respect to $k$ and grow at nonvanishing rate with respect to $\mu$.
By matrix perturbation theory \cite{K}, these eigenvalues may in the vicinity of $(\mu_0, k_0)$ be extended along with their associated
eigenvectors $r$ as smooth functions
\be\label{disp}
\lambda=\lambda_0(\mu,k), \quad r=r_0(\mu,k).
\ee 
By reflection symmetry of the linearized equations with respect to $x$ (indeed, there holds $O(2)$ symmetry 
given by translation and reflection invariance in $x$), eigenfunctions for a given $\lambda$
value appear in pairs $e^{\pm ikx}r$, whence we may deduce, noting that also by real-valuedness of the operator, $\bar \lambda$, $e^{-ik x}\bar r$
must be another eigenvalue, eigenfunction pair, that simplicity of $\lambda_0(\mu, k)$ implies $(\lambda_0,r_0)(\mu, k)$ real (see, e.g., \cite{PYZ,GS} and references therein).

Thus, we have
\be\label{derivfacts}
(\partial \lambda_0/\partial k)(\mu_0,k_0)=0, \quad
(\partial^2 \lambda_0/\partial k^2)(\mu_0,k_0)= real < 0;
\qquad 
(\partial \lambda_0/\partial \mu)(\mu_0,k_0)=real> 0.
\ee

Motivated by the parabolic behavior \eqref{derivfacts},
introduce the diffusive scaling 
\be\label{dscale}
\mu=\mu_0 +\eps^2, \quad  X=\eps x,\quad T=\eps^2 t,
\ee
and make the multi-scale ansatz
\be\label{multiscale}
u(x,t)\approx  \{ \eps A(\eps x, \eps^2 t)e^{ik_0 x}r + \eps^2 v_2(\eps x, \eps^2 t; x) + \dots \} + c.c.,
\quad A\in \C,
\ee
where $c.c.$ denotes complex conjugate and $v_j$ are to be determined.
Denote $L= D\partial_x^2 + M_{\mu_0}$ and
$f(u, \mu_0)- M_{\mu_0} u =N_2(u,u) + N_3(u,u,u) + O(|u|^4)$, where $N_j$ are symmetric multilinear forms corresponding to mixed directional
derivatives, so that 
\be\label{eq}
D\partial_x^2 +f(u,\mu)= L + \eps^2 \hat M + N_2(u,u) + N_3(u,u,u) + O(\eps^4+|u|^4),
\ee
where $\hat M:=(\partial M_{\mu} /\partial \mu)|{\mu=\mu_0}$.

Then, substituting in \eqref{rd} and matching powers of $\eps$, we obtain, at order $\eps^1$, the equation
$Le^{ik_0x}r_0=0$, as follows automatically by definition of $k_0$, $r_0$.
At order $\eps^2$, we obtain
\be\label{2nd}
\{Lv_2 + 2ik_0 D \partial_X A e^{ik_0 x} r_0 + \frac12 N_2(r,r)( e^{2ik_0 x} + 1)
\} + c.c.=0
\ee
where $L$ is applied in the second (fast) variable only.
By the assumptions on $\Sp(L)$, this is soluble for $v_2$ precisely if the eigenprojection $P$ of $(-k_0^2D+ M_{\mu_0})$ onto its kernel
annihilates the term $ ik_0 D \partial_X A e^{ik_0 x} r_0 $, or 
\be\label{Dannil}
P  D r_0=0 .
\ee
By standard spectral perturbation theory \cite{K}, this is equivalent to
$(\partial \lambda_0/\partial k)(\mu_0,k_0)=0$, as
holds by assumption \eqref{derivfacts}(i). 
At order $\eps^2$, the corresponding solvability condition gives, finally, the {\it real Ginzburg Landau equation}
\be\label{GL}
\partial_T A= d\partial_X^2 A + e A - f|A|^2 A,
\ee
$A\in \C$, for appropriate $d,e,f\in \R$.  Here, the linear coefficients may be computed simply through
$$
e= (\partial \lambda_0/\partial \mu)|_{\mu_0, k_0}, \qquad d=- (1/2)(\partial^2 \lambda_0/\partial k^2)|_{\mu_0,k_0},
$$
while the nonlinear coefficient $f$ is given by a more complicated formula involving $N_2$, $N_3$, $r_0$, $L$;
see Section \ref{s:shortcut} for details.

Equation \eqref{GL} may then be solved explicitly for solutions $A=c(\omega) e^{i\omega X}= e^{i\omega \eps x}$, yielding the aforementioned
prediction \eqref{branch} regarding existence, for 
\be\label{exist1}
\omega \in I_E:= [-\sqrt{e/d},+\sqrt{e/d}].
\ee
Likewise, the linearized stability problem may be solved explicitly \cite{T,TB,M1} 
(see Section \ref{s:GLcompare})
to yield the {\it Eckhaus stability criterion} 
\be\label{eck}
\omega \in I_S:= (-\sqrt{e/3d},+\sqrt{e/3d} ) \subset I_E.
\ee
This recovers the formal theory of Eckhaus \cite{E} as applied to reaction diffusion systems.

\br[Shortcut computations]\label{s:shortcut}
As noted by Mielke \cite[Section 2]{M3}, 
once one knows existence of a valid Ginzburg Landau expansion, one may compute coefficients efficiently by various shortcuts.
For example, consider the dispersion relation $\lambda(\mu, k)$ around the given base (constant) state.
Introducing the Ginzburg-Landau scalings
$\mu= \eps^2$, $k=k_0 + \eps \omega$, expand $\lambda(\eps^2, k_0+ \eps \omega)$ in powers of $\eps$,
we obtain (via the Chain rule):
\be\label{ccexp}
\lambda( \eps^2, k_0+ \eps \omega) =
(\partial \lambda/\partial \mu)|_{0, k_0} \eps^2 + (1/2)(\partial^2 \lambda/\partial_k^2)|_{0,k_0}\eps^2 \omega^2
+ O(\eps^3).
\ee
	Then, the coefficients of \eqref{ccexp} agree with the linear parts of the Ginzburg-Landau equation \eqref{GL2},
	with 
	$e= (\partial \lambda/\partial \mu)|_{0, k_0}$ and 
	$d=- (1/2)(\partial^2 \lambda/\partial_k^2)|_{0,k_0}$.
	This may be proved alternatively by:
	(i) direct comparison, computing via implicit differentiation from the characteristic polynomial, or
	(ii) observation that the Ginzburg-Landau expansion procedure, omitting nonlinear terms, is exactly the spectral expansion procedure
	for determining the Taylor expansion of $\lambda(\eps^2, k_0+ \eps \omega^2)$.
As noted in \cite{M3}, the constant-coefficient dispersion relation is typically computed in the course of locating
Turing instability in the first place, so 
often
already available.  In the scalar case, or for $2\times 2$ systems,
it is also considerably easier to compute the partial derivatives of $\lambda(\cdot, \cdot)$ than to carry out the complete
Ginzburg Landau expansion.
Likewise \cite{M3}, it is not necessary to include slow time-dependence in the derivation of nonlinear coefficients, thus eliminating a number of terms.
\er

\subsection{The Brusselator model}\label{s:bruss}
The Brusselator model \eqref{bruss1} corresponds to reactions
\be\label{reactions}
A \rightarrow X,
\quad
2X + Y \rightarrow 3X\quad
B + X \rightarrow Y + D\quad
X \rightarrow E,
\ee
with $u_1=\{X\}$, $u_2=\{Y\}$, $a=\{A\}$, $\beta=\{B\}$,
in the situation that precursor species A and B are present in inexhaustible, essentially fixed, concentrations,
yielding rate equations for product species X and Y (setting rate constants $=1$) of
\ba\label{brate}
{d \over dt}\left\{ X \right\} &= \left\{A \right\} 
+ \left\{ X \right\}^2 \left\{Y \right\}  - \left\{B \right\} \left\{X \right\} - \left\{X \right\} ,  \\ 
{d \over dt}\left\{ Y \right\} &=   \left\{B \right\} \left\{X \right\} - \left\{ X \right\}^2 \left\{Y \right\}    ,
\ea
where $\{ \cdot \}$ denotes concentration, with fixed point
$\left\{ X \right\} =   \left\{ A \right\}   $, $\left\{ Y \right\} =   \left\{B \right\} /\left\{
 A \right\}$.

The fixed point is stable for $\left\{B \right\} < 1+\left\{A \right\}^2$, 
at which point there is a Hopf bifurcation to chemical oscillation, or ``clock reactions.''
For us, $\{A\}=2$, $\{B\}= 4$, so that we are indeed in the stable regime envisioned by Turing.
As discussed in \cite{C}, Turing instability can occur for $2\times 2$ systems only for ratios of
diffusion constants $D_1/D_2$ rather far from $1$, hence our choice of $D_j$: (from \cite{C})
``Since the diffusion coefficients
of most small ions in water have the same value of about $10^{-9} m^2/sec$, some ingenuity is required to
create a Turing instability. Experimentalists found (by accident!) that one way to achieve a large disparity
in diffusion coefficients was to introduce a third molecule (such as starch...)
that was fixed to an immobile matrix in the solution...''

The real Ginzburg Landau equation corresponding to the Brusselator model with our choice of parameters 
may be computed (see Section \ref{s:GLcompare}) to be
\be\label{Brusselator_GL}
\partial_t A= (32/3)\partial_x^2 A + (2/3) A - |A|^2 A;
\ee
in the notation of \eqref{GL}--\eqref{eck}, $d=32/3$, $e=2/3$, $f=1$.
Hence, for the Brusselator model that we study here, the stability and existence intervals given in \eqref{eck} are
\be\label{beck}
I_S= \Big[-\frac{1}{4\sqrt{3}},\frac{1}{4\sqrt{3}}\Big] \subset I_E=\Big[-\frac{1}{4},\frac{1}{4}\Big].
\ee

\subsection{Diffusive stability condition}\label{s:diff}
We next briefly recall the {\it diffusive stability condition} of Schneider \cite{S1,S2}.
Linearizing \eqref{rd} about a periodic solution $u(x,t)\equiv \bar u(x)$, we obtain 
the linearized equations
\be\label{lin}
u_t=Lu:= D\partial_x^2 u + df(\bar u, \mu) u.
\ee
Differentiating with respect to $x$ the profile ODE $\partial_x^2 D \bar u+ f(\bar u)=0$, we
find that $\partial_x \bar u$ is a bounded solution of the eigenvalue ODE $(L-\lambda)w=0$ for
$\lambda=0$, whence, by general Bloch expansion/Floquet theory \cite{S2,M2,M3}, there exists a
continuous curve $\lambda_*(\sigma)$ of spectra of $L$, defined for $\sigma \in \R$ sufficiently
small. The diffusive stability condition is that this branch be isolated, in the sense that all other
spectra have strictly negative real part, and multiplicity one in the sense of Bloch expansion;
we describe this last more carefully in Section \ref{s:stability}.
The second condition (see \cite{S2}) is that
\be\label{scond}
\Re \lambda_*(\sigma)\leq -\eta |\sigma|^2, \quad \eta>0.
\ee

\subsection{Main results}\label{s:mainresults} 
We are now in position to state our main results.
Let $H^s_{per}([0,2\pi], \RR^2)$ denote the space of $H^s$ functions that are periodic on the interval $[0,2\pi]$.
Making the coordinate shift $u \to u- (2,\beta/2)$ and the shift in $\beta$: $\beta \to b+4$, we may rewrite 
\eqref{bruss1} in the general form \eqref{rd}, with all equilibrium states centered at $u\equiv u_*=0$ and Turing instability occurring at $b=0$.
Introducing the wave number $k$ and making the independent coordinate change $x\to kx$, we may further normalize the set of periodic solutions
with wave number $k$ to periodic solutions on the fixed interval $[0,2\pi]$ of
\be \label{B model1}
0=N(b, k, \tilde u):=k^2D\partial_\xi^2 \tilde u + f(\tilde u),
\qquad N(b,k,0)\equiv 0.
\ee

Set now $b:=\eps^2$, $k=\frac{2\eps\omega+\sqrt{4\eps^2\omega^2+1}}{2}$.
Our first result rigorously characterizes Turing bifurcation of periodic solutions of \eqref{bruss1} from 
equilibrium states $u \equiv (2,\beta/2)$. 

\begin{theorem}[Existence]\label{existence}
	There is an $\eps_0$ such that for all $\eps\in[0,\eps_0)$ and all 
		$\omega\in I_E= [-\frac{1}{4},\frac{1}{4}]$ there is a unique small solution $\tilde u_{\eps,\omega}\in H^2_{per}([0,2\pi], \RR^2)$ of \eqref{B model1} which is even in $w$, positive at $w=0$ and has the expansion formula:
	\be
	\begin{split}
	\tilde u_{\eps,\omega}&=\sqrt{\frac{2}{3}(1-16\omega^2)}\cos \xi \bp 2 \\ -1 \ep\eps+\big(-\frac{4}{3}\sqrt{\frac{2}{3}(1-16\omega^2)}\cos \xi \bp 1 \\ -2 \ep\\
	&-\frac{5}{9}\omega\sqrt{\frac{2}{3}(1-16\omega^2)}\cos \xi \bp 2 \\ -1 \ep\big)\eps^2+\mathcal{O}(\eps^3).
	\end{split}
	\ee
	Note that when $\omega=\pm\frac{1}{4}$, $\tilde u_{\eps,\omega}\equiv0$ reduces to the equilibrium (zero) solution.
\end{theorem}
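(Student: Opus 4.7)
The plan is to apply Lyapunov--Schmidt reduction to \eqref{B model1} on the closed subspace of even functions in $H^2_{per}([0,2\pi],\RR^2)$, adapting the Mielke--Schneider scheme used for Swift--Hohenberg to the vector-valued, non-self-adjoint Brusselator. Linearizing $N(\eps^2,k,\cdot)$ at the trivial solution $\tilde u\equiv 0$ yields
\[
L_{\eps,\omega} \;:=\; k(\eps,\omega)^2 D\partial_\xi^2 + M(\eps^2),\qquad M(b):=\partial_u f(0;b).
\]
At the bifurcation point $\eps=0$, $k=k_0=1/2$, the explicit Brusselator matrices produce a simple kernel in the even subspace spanned by $\varphi_0(\xi):=\cos(\xi)\,r_0$ with $r_0=(2,-1)^\top$, and a one-dimensional cokernel spanned by $\psi_0(\xi):=\cos(\xi)\,\ell_0$, where $\ell_0$ is the left null vector of $-k_0^2D+M(0)$ normalized by $\ell_0\cdot r_0=1$.

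Introduce the associated spectral projections $P$ (onto $\Span\{\varphi_0\}$) and $Q=I-P$, and make the rescaled ansatz $\tilde u=\eps A\,\varphi_0+\eps^2 W$ with $A\in\RR$ and $W$ in the range of $Q$. Inserting into $N=0$ and projecting by $Q$ produces an equation of the form $L_0W=G(A,W,\eps,\omega)$, where $G$ is smooth in all arguments down to $\eps=0$ with $G|_{\eps=0}=-\tfrac12 A^2\,QN_2(\varphi_0,\varphi_0)$; since $L_0$ is invertible on the range of $Q$, the implicit function theorem supplies a unique smooth $W=\mathcal{W}(A,\eps,\omega)$. Substituting this $W$ back into the $P$-component leaves a scalar bifurcation equation $\Phi(A,\eps,\omega)=0$. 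The $\ZZ_2$-symmetry $\xi\mapsto\xi+\pi$, a translation preserving the even subspace, acts as $A\mapsto -A$ and fixes the equation, so $\Phi$ is odd in $A$; writing $\Phi=A\,\Psi(A^2,\eps,\omega)$, the equation $\Psi=0$ evaluated at $\eps=0$ is the steady-state real Ginzburg--Landau relation $(e-d\omega^2)-fA^2=0$ with the Brusselator coefficients $e=2/3$, $d=32/3$, $f=1$ computed in Section~\ref{s:GLcompare}. For $\omega$ in the interior of $I_E=[-1/4,1/4]$ this yields $A_0^2(\omega)=(2/3)(1-16\omega^2)>0$, and since $\partial_{A^2}\Psi|_{\eps=0}=-f\ne 0$, a second application of the implicit function theorem gives a unique smooth branch $A(\eps,\omega)=A_0(\omega)+O(\eps)$; the sign is fixed by the stated positivity of $\tilde u_{\eps,\omega}$ at $\xi=0$. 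At $\omega=\pm 1/4$ one has $A_0=0$ and the branch collapses to the trivial equilibrium, exactly as asserted.

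The explicit expansion in the theorem is then recovered by iterating this scheme: write $A=A_0+\eps A_1+O(\eps^2)$ and $W=W_{(0)}+\eps W_{(1)}+O(\eps^2)$, substitute into both projected equations, and match powers of $\eps$. The leading corrector $W_{(0)}$ is obtained by inverting $L_0$ on the range of $Q$ against $-\tfrac12 A_0^2\,QN_2(\varphi_0,\varphi_0)$; this right-hand side decomposes into a $\cos(2\xi)$-mode and a constant mode, and inversion produces the $\cos(\xi)(1,-2)^\top$ contribution displayed in the theorem. The $\omega$-linear piece $-\tfrac{5}{9}\omega A_0\cos(\xi)(2,-1)^\top$ arises from $A_1$, determined by the $\eps^1$ solvability condition that couples the detuning $k(\eps,\omega)^2=k_0^2+2k_0\eps\omega+O(\eps^2)$ acting on $\varphi_0$ to the cubic feedback $N_3(\varphi_0,\varphi_0,\varphi_0)$ paired against $\psi_0$. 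The main obstacle, as the authors emphasize in the introduction, is algebraic rather than analytic: unlike the scalar cubic Swift--Hohenberg equation, the vector Brusselator carries a nontrivial quadratic nonlinearity $N_2$, forcing explicit computation of both $W_{(0)}$ and $W_{(1)}$, and one must verify that the resulting $\eps^3$ solvability condition leaves the cubic coefficient $f$ nonzero (no accidental cancellation), so that the second implicit function theorem remains non-degenerate and the asserted real-analytic branch exists for all $\omega\in I_E$.
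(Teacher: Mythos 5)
Your overall strategy is the same as the paper's: Lyapunov--Schmidt reduction about the degenerate linearization at $(b,k)=(0,\tfrac12)$, an implicit-function-theorem solve for the complement $W$, a scalar bifurcation equation made odd in $A$ by symmetry, and a second IFT for the amplitude. The two genuine (and harmless) variations are that you restrict to the even subspace from the outset, so the kernel is one-dimensional and the shift $\xi\mapsto\xi+\pi$ replaces the paper's $O(2)$-equivariance argument for the form of the reduced equation, and that you impose the scaling $\tilde u=\eps A\varphi_0+\eps^2W$ before reducing, whereas the paper expands $V$ in powers of the unscaled amplitude $\alpha$ and introduces $\eps,\omega$ only at the end (your version is closer to the paper's Section 4 ``preconditioned'' comparison than to its Section 2 proof).

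There is, however, a concrete error in your description of the order-$\eps^2$ step. With $k(\eps,\omega)^2=k_0^2+2k_0\omega\eps+O(\eps^2)$, the term $(k^2-k_0^2)D\partial_\xi^2(\eps A\varphi_0)$ contributes $2k_0\omega A\,D\partial_\xi^2\varphi_0=-8\omega A\cos\xi\,(1,-2)^\top\cdot(\text{const})$ at order $\eps^2$, so $G|_{\eps=0}$ is \emph{not} just $-\tfrac12A^2QN_2(\varphi_0,\varphi_0)$: the detuning term survives the limit $\eps\to0$ (its solvability is exactly the condition $\ell_0\cdot Dr_0=0$, i.e.\ $(2,1)\cdot(8,-16)=0$). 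Moreover $N_2(\varphi_0,\varphi_0)\propto\cos^2\xi$ lives entirely in the $\{1,\cos2\xi\}$ Fourier modes, which $L_0$ preserves, so inverting $L_0$ against it can only produce constant and $\cos2\xi$ correctors --- it cannot produce the $\cos\xi\,(1,-2)^\top$ term in the theorem. That term comes from inverting $L_0$ against the detuning term you dropped (this is visible in the paper's computation of $f_1=\tfrac{-8k^2+(2b+6)/3}{20k^2-(b-3)/3}=-\tfrac{4\omega}{3}\eps+O(\eps^2)$, and in the Ginzburg--Landau corrector $\Psi_1=\tfrac43 i\partial_{\hat x}A\,(1,-2)^\top$). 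A second, smaller gap: your final IFT gives $A^2=\tfrac23(1-16\omega^2)+O(\eps)$, and for $\omega$ near $\pm\tfrac14$ the leading term is comparable to the error, so positivity of $A^2$ --- hence existence of a real branch with $\eps_0$ uniform in $\omega$ up to the endpoints --- does not follow from what you wrote. The paper handles this by the exact parametrization $\tfrac{4k^2-1}{2k}=4\eps\omega$, which makes the reduced coefficient $\mathcal A$ a positive multiple of $b(1-16\omega^2)$ exactly, and by the substitution $\alpha=\sqrt{|\mathcal A|}\,\mathcal B$ so that the IFT is applied to $\mathcal B$ rather than to $A$ near the degenerate endpoints; some such device is needed to get the closed interval $I_E=[-\tfrac14,\tfrac14]$ as stated.
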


\begin{proof}
Given in Section \ref{s:existence}.
\end{proof}

Our second result rigorously characterizes diffusive stability/instability of bifurcating solutions.

\begin{theorem}[Stability]\label{stabthm}
	Let $u_{\eps,\omega}$ be the solution from Theorem \ref{existence}.  Then there exist
	 $\tilde\eps_0\in(0,\eps_0]$, where $\eps_0$ is taken from Theorem \ref{existence}, $\sigma_0>0$ and $\delta>0$ such that for all $\eps\in[0,\tilde\eps_0)$, all $\sigma\in[0,\sigma_0)$  and all $\omega\in[-\frac{1}{4},\frac{1}{4}]$ the spectrum of $B(\eps,\omega,\sigma)$ has the decomposition:
	\be
	\begin{split}
		\Sp(B(\eps,\omega,\sigma))=S\cup\{\lambda_1,\lambda_2\}.
	\end{split}
	\ee
	where $\Re\lambda<-\delta$ for $\lambda\in S$ and $|\lambda_j|<<1$.
	 Moreover, for each fixed 
	 $\omega\in I_S= (-\frac{1}{4\sqrt{3}}, \frac{1}{4\sqrt{3}})$ there exists $\hat\eps_0\in(0,\tilde\eps_0)$ such that for all $\eps\in[0,\hat\eps_0)$, all $\sigma\in[0,\sigma_0)$
	 \be\label{lamexp}
	\begin{split}
		&	\Re\lambda_{1}\leq c(\eps,\omega)+\tilde c(\eps,\omega)\sigma-\tilde{\tilde c}_1(\eps,\omega)\sigma^2+\mathcal O(\sigma^3),\\
		&	\Re\lambda_{2}\leq -\tilde{\tilde c}_2(\eps,\omega)\sigma^2+\mathcal O(\sigma^3),
	\end{split}
	\ee
	for $c(\eps)<0<\tilde{\tilde c}_j(\eps,\omega)$, giving \emph{diffusive stability},
	while if 
	$\omega \in I_E\setminus \overline{I_S}=[-\frac{1}{4}, -\frac{1}{4\sqrt{3}})\cup
	(\frac{1}{4\sqrt{3}}, \frac{1}{4}]$, then
	\be\label{unst}
\max_\sigma\{\Re\lambda_1,\Re\lambda_2\}>0,
\ee
giving \emph{diffusive instability}.

\end{theorem}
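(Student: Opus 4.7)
The plan is to follow the Lyapunov--Schmidt reduction framework of Mielke--Schneider, adapted to the vectorial reaction--diffusion setting. First, one sets up the Bloch-decomposed linearization $B(\eps,\omega,\sigma)$ of \eqref{B model1} about $\tilde u_{\eps,\omega}$, acting on $H^2_{per}([0,2\pi],\CC^2)$, where $\sigma$ is the Floquet parameter. At the critical point $(\eps,\sigma)=(0,0)$ the bifurcating solution collapses to equilibrium and $B$ reduces to the constant-coefficient operator $k_0^2 D\partial_\xi^2 + M_0$, whose kernel is spanned by the neutral Turing modes $e^{\pm i\xi}r_0$; by the Turing hypothesis the rest of its spectrum is bounded away from $0$ by a spectral gap $\delta>0$.

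By analytic perturbation theory this gap persists for all small $\eps$ and $\sigma$, yielding the decomposition $\Sp(B(\eps,\omega,\sigma))=S\cup\{\lambda_1,\lambda_2\}$ with $\Re S<-\delta$ and $|\lambda_j|\ll 1$. Letting $P_0$ denote the spectral projection onto the kernel of $B(0,\omega,0)$, I would characterize $\lambda_1,\lambda_2$ as the eigenvalues of a $2\times 2$ reduced matrix $M(\eps,\omega,\sigma)$ obtained by inverting $B$ on $\ran(I-P_0)$ to eliminate the strongly stable component and then projecting the residual spectral equation onto $\ran P_0$.

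The heart of the argument is to expand $M(\eps,\omega,\sigma)$ through sufficient order in $\eps$ and $\sigma$. Because the vectorial $\eps^2$-profile corrector in Theorem \ref{existence} does not vanish (unlike in Swift--Hohenberg), one must substitute the expansion of $\tilde u_{\eps,\omega}$ through order $\eps^3$ into the linearized equation and retain terms through $O(\eps^2+\sigma^2)$ in the reduction. After the Ginzburg--Landau reframing described in the introduction, the leading-order reduced matrix should match operation-by-operation the Bloch reduction of the linearized Ginzburg--Landau operator \eqref{Brusselator_GL} about $A(X)=c(\omega)e^{i\omega X}$, whose spectrum is given explicitly by the classical Eckhaus computation. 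For $\omega\in I_S$, the leading matrix then has negative trace and positive determinant (modulo higher-order perturbation), producing real eigenvalues satisfying \eqref{lamexp} with $c(\eps,\omega)<0$ capturing the $O(\eps^2)$ gap opened by the bifurcation and $\tilde{\tilde c}_j(\eps,\omega)>0$ inherited from the diffusive coefficient $d$ of the Ginzburg--Landau linearization; the mode $\lambda_2$ vanishes to leading order at $\sigma=0$ on account of translation invariance of $\tilde u_{\eps,\omega}$. For $\omega\in I_E\setminus\overline{I_S}$, the $\sigma^2$-coefficient on one branch flips sign in accordance with the Eckhaus criterion \eqref{eck}, and maximizing over $\sigma$ produces \eqref{unst}.

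The main obstacle will be twofold. First, as the introduction emphasizes, closing the expansion requires verifying that the $O(\eps^3)$ remainders generated by the vectorial profile expansion do not swamp the sign predictions of the formal Eckhaus criterion inside the reduced $2\times 2$ matrix; this demands careful bookkeeping of the errors arising from $N_2$, $N_3$, and the inverse of $B(0,\omega,0)$ on $\ran(I-P_0)$, which is precisely the technical step that is automatic in the scalar Swift--Hohenberg setting. Second, because the Brusselator linearization is \emph{not} self-adjoint, $M(\eps,\omega,\sigma)$ carries nontrivial skew-symmetric corrections that can in principle generate complex eigenvalues. Pinpointing the transition between the real small-scale spectrum (Ginzburg--Landau regime) and the possibly complex large-scale spectrum requires carrying the reduction one further order in $\eps$ and isolating the skew part of $M$ precisely enough to show that the discriminant of $M$ remains positive on the order-$\eps$ Ginzburg--Landau scale in $\sigma$ while becoming negative only in a separate transition regime; I expect this to be the most delicate step of the proof.
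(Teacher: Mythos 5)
Your outline follows the same broad strategy as the paper (Bloch decomposition, spectral gap for the far spectrum, Lyapunov--Schmidt reduction to a $2\times 2$ problem, comparison with the Ginzburg--Landau/Eckhaus computation), but it leaves unresolved precisely the two points on which the proof actually turns, and in one place asserts something that does not follow. First, the error control. A naive reduction produces a matrix of the form \eqref{spmatrix}--\eqref{det} whose remainder entries are of size $\mathcal O(\eps^3)$ off the diagonal and $\mathcal O(\eps^2(\eps+|\lambda|))$ on the diagonal; since the Eckhaus sign change lives in the $\sigma^2\eps^2$ coefficient of the determinant (the factor $1-48\omega^2$), and the relevant regime is $\sigma\sim\eps$, such remainders would swamp the very quantity that decides stability versus instability. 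You name this as ``careful bookkeeping'' but do not supply the mechanism that closes it. The paper needs two structural inputs: (i) translation invariance forces $\lambda_2(\eps,\omega,0)=0$ \emph{exactly}, which via the uniqueness clause of the Implicit Function Theorem upgrades the diagonal remainders to carry a factor of $|\lambda|$ (Lemma \ref{remainder}); and (ii) the reflection and conjugation symmetries $R_1,R_2$ force the diagonal entries of the reduced matrix to be even and the off-diagonal entries odd in $\sigma$, eliminating the dangerous $\mathcal O(\sigma\eps^2)$ and $\mathcal O(|\lambda|\eps^3)$ terms (Corollary \ref{errcorr}). Without these refinements the expansion does not close, and this is exactly the step the paper identifies as the new difficulty beyond Swift--Hohenberg.

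Second, your claim that for $\omega\in I_S$ the reduced matrix has ``negative trace and positive determinant \dots producing real eigenvalues satisfying \eqref{lamexp}'' is not correct as stated: negative trace and positive determinant do not imply reality, and the paper's Theorem \ref{reality} shows the small eigenvalues are in fact \emph{complex} in a transition regime $\sigma\sim\sqrt{\eps}$. This matters because the possible complexity is precisely why the Mielke--Schneider device of studying the product $\lambda_1\lambda_2$ in the intermediate regime is unavailable here; the paper substitutes a division into three regimes in $\hat\sigma=\sigma/\eps$ ($|\hat\sigma|\ll1$ by Taylor expansion of the roots of the Weierstrass-prepared quadratic, $1/C\le|\hat\sigma|\le C$ by continuity from the explicitly computable Ginzburg--Landau spectrum, $|\hat\sigma|\gg1$ by dominance of the $-\tfrac{16}{3}\sigma^2$ term). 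Your proposal does not address how the full range $\sigma\in[0,\sigma_0)$ is covered, nor does it account for the fact that the reduced $2\times 2$ system is itself $\lambda$-dependent (a nonlinear eigenvalue problem), which the paper resolves with the Weierstrass Preparation Theorem before extracting the roots. These are genuine gaps rather than routine omissions.
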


\begin{proof}
Given in Section \ref{s:stability}.
\end{proof}

Theorems \ref{existence} and \ref{stabthm} together rigorously validate the predictions of the Ginzburg Landau approximation
regarding existence and stability of small bifurcating solutions; cf. \eqref{beck}.
Our third main result addresses reality of the critical modes $\lambda_j(\cdot)$, identifying a transition from
a Ginzburg Landau zone $\sigma \sim \eps$ in which $\lambda_j$ are real to a transition zone
$\sigma\sim\sqrt{\eps}$ for which $\lambda_j$ can be complex.

\begin{theorem}[Reality]\lb{reality} Let $\lambda_1$ and $\lambda_2$ be as in Theorem \ref{existence}. Then there exist $\tilde{\tilde \eps}_0\in(0,\tilde \eps_0)$ and ${\tilde \sigma}_0\in(0, \sigma_0)$ such that for all $\eps\in[0,\tilde{\tilde \eps}_0)$, all $\sigma\in[0,\tilde\sigma_0)$ and all $\omega\in[-\frac{1}{4},0]$, $\lambda_1(\eps,\omega,\sigma)$ and $\lambda_2(\eps,\omega,\sigma)$ are real. Also, for the ansatz $\sigma=\sqrt{6\omega\eps}$ there exist $\tilde{\tilde \eps}_0\in(0,\tilde \eps_0)$ and $\omega_0\in(0,\frac{1}{4})$ such that for all $\eps\in[0,\tilde{\tilde \eps}_0)$ and all $\omega\in(\omega_0,\frac{1}{4}]$, $\lambda_1(\omega,\eps)$ and $\lambda_2(\omega,\eps)$ are complex (not real).
 	
\end{theorem}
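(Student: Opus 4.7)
The plan is to reduce reality of $\lambda_1, \lambda_2$ to a sign condition on the discriminant of the $2\times 2$ reduced matrix $B(\eps,\omega,\sigma)$ supplied by the Lyapunov--Schmidt reduction in Theorem \ref{stabthm}. Since the Brusselator is real and reflection--symmetric, the Bloch decomposition yields a matrix $B$ whose trace and determinant are real analytic functions of $(\eps,\omega,\sigma)$. Hence $\Delta := (\mathrm{tr}\,B)^2 - 4\det B$ is itself real analytic, and $\lambda_1, \lambda_2$ are real if and only if $\Delta \geq 0$; in the complex--eigenvalue case, complex--conjugation invariance of the Brusselator spectrum forces $\lambda_2 = \bar\lambda_1$ and $\Delta = -4(\mathrm{Im}\,\lambda_1)^2 < 0$. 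The theorem thus reduces to a sign analysis of $\Delta$ in the two specified regimes.

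Next I would extract from the proof of Theorem \ref{stabthm} a Taylor expansion of $\mathrm{tr}\,B$ and $\det B$ in $(\eps,\omega,\sigma)$, carried to one order beyond what is needed for pure GL--validation. The leading part must reproduce the discriminant of the linearization of \eqref{Brusselator_GL} about the GL--periodic solution $A = c(\omega)e^{i\omega X}$: writing the complex perturbation as $Ce^{i\omega X}$ with $C = C_1 + iC_2$ and Fourier--transforming in the slow variable at wave number $\sigma_{GL}$, a direct $2\times 2$ computation gives
\[
\Delta_{GL}(\omega,\sigma_{GL}) = 16\,d^2\,\omega^2\,\sigma_{GL}^2 + 4(e - d\omega^2)^2 \geq 0.
\]
In particular, the GL approximation alone always produces real eigenvalues, so the complex pair in the second part of the theorem must arise from the subleading corrections to $\Delta$ beyond GL, whose computation requires the $\eps^2$-- and $\eps^3$--correctors to $\tilde u_{\eps,\omega}$ supplied in Theorem \ref{existence}.

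For the first part ($\omega\in[-1/4,0]$, small $\eps,\sigma$), the leading part of $\Delta$ is nonnegative and I would show it dominates the subleading corrections uniformly on this closed interval, once $\eps$ and $\sigma$ are taken small enough. The sign restriction $\omega\leq 0$ enters through a cross--term in the subleading correction that is linear in $\omega$ with a definite sign, inherited from the linear--in--$\omega$ term of the $\eps^2$--corrector of $\tilde u_{\eps,\omega}$ displayed in Theorem \ref{existence}. For the second part, the ansatz $\sigma^2 = 6\omega\eps$ is tuned so that the GL leading contribution, which along the ansatz scales as $96 d^2 \omega^3 \eps^3 + 4(e-d\omega^2)^2\eps^4$, becomes comparable in size with the subleading correction (which also contains an $\eps^3$--piece, because $\sigma_{GL}^2 = 6\omega/\eps$ is large along the ansatz); the coefficient $6$ is chosen precisely so that the two combine into a single $\eps^3$ expression whose sign is determined by the subleading piece. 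For $\omega$ close enough to $1/4$, the positive $(e-d\omega^2)^2\eps^4$ contribution is negligible, so the subleading (negative) term dominates, giving $\Delta<0$ and hence a complex conjugate pair $\lambda_2 = \bar\lambda_1$.

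The main obstacle is computing this subleading correction to $\Delta$ explicitly enough to pin down both its sign and the numerical value $6$ in the ansatz. This forces the Lyapunov--Schmidt eigenvalue reduction one order beyond the GL--validation of Theorem \ref{stabthm}, propagating the $\eps^2$-- and $\eps^3$--correctors of $\tilde u_{\eps,\omega}$ from Theorem \ref{existence} through the reduction and into the trace and determinant of $B$; this is the most involved piece of the bookkeeping and, as emphasized in the introduction, is where the vectorial Brusselator case is genuinely harder than the Swift--Hohenberg case. Once the correction is in hand, the sign analysis in each regime reduces to elementary estimation of low--degree polynomials in $\omega,\eps,\sigma$.
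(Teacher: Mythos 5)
Your overall strategy is the paper's: reduce reality to the sign of the discriminant of the reduced $2\times 2$ spectral problem, observe that at Ginzburg--Landau order the discriminant is a sum of squares $4(e-d\omega^2)^2+16d^2\omega^2\hat\sigma^2\ge 0$ so that complexity can only enter through the beyond-GL corrections, and then tune $\sigma$ so that the dominant positive term is destroyed. Two points, however, are genuine gaps rather than deferred bookkeeping. First, the object whose discriminant you analyze is not a $\lambda$-independent matrix $B(\eps,\omega,\sigma)$: the Lyapunov--Schmidt complement $\mathcal V$ depends on $\lambda$, so the reduced matrix is $m(\eps,\omega,\sigma,\lambda)$ and $\det m=0$ is not a priori a quadratic in $\lambda$ with real coefficients. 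The paper needs two extra ingredients here: a symmetry lemma (from $\xi\mapsto-\xi$ and complex conjugation) showing that for real $\lambda$ the diagonal entries of $m$ are real and even in $\sigma$ while the off-diagonal entries are purely imaginary and odd in $\sigma$, so that $\det m$ is real-valued; and the Weierstrass Preparation Theorem, $q\cdot\det m=\lambda^2+a_1\lambda+a_0$ with $q,a_0,a_1$ real analytic, to convert the $\lambda$-dependent determinant into an honest monic real quadratic whose discriminant controls reality. Without this step your claim that ``$\Delta$ is real analytic and $\lambda_j$ are real iff $\Delta\ge0$'' does not yet make sense.

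Second, your order count along the ansatz is off in a way that would derail the sign analysis. The paper's discriminant has the form $c^2(\eps)+\tfrac{32^2}{9}\sigma^2\bigl(\tfrac13\sigma^2-2\omega\eps\bigr)^2+\tfrac{2^8}{3^5}\sigma^4(\cdots)+\text{errors}$; the coefficient $6$ in $\sigma^2=6\omega\eps$ is chosen so that $\tfrac13\sigma^2-2\omega\eps$ vanishes \emph{identically}, killing the would-be $O(\eps^3)$ positive term entirely (not ``combining it'' with the correction into an $\eps^3$ expression). The competition then takes place at order $\eps^4$, where the remaining terms assemble into $-\tfrac{2^{12}}{3^3}\eps^4(\omega^2-\omega_1)(\omega^2-\omega_2)+\mathcal O(\eps^5)$ with $\omega_1\approx0.007$, $\omega_2\approx-1.63$, which is negative only for $\omega>\sqrt{\omega_1}$ --- this is where the threshold $\omega_0$ in the statement comes from, and it cannot be seen at the $\eps^3$ level where you propose to read off the sign. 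Relatedly, for the first part the restriction $\omega\le 0$ is used exactly to prevent cancellation inside the square $\bigl(\tfrac13\sigma^2-2\omega\eps\bigr)^2$ (both summands then have the same sign), so the positive leading terms dominate the error uniformly; your attribution of this to a cross-term of definite sign is consistent with expanding that square, but the controlling structure is the perfect square itself. With the Weierstrass preparation step added and the $\eps^4$-order computation actually carried out, your outline matches the paper's proof.
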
	
\begin{proof}
	Given in Section \ref{s:stability}.
\end{proof}

Our fourth main result states that, within the Ginzburg Landau regime $\lambda\sim \eps^2$, $\sigma \sim \eps$,
the Ginzburg Landau approximation not only well-predicts stability/instability, but to lowest order also
the linearized dispersion relations for the two smallest eigenmodes.

\bt\label{dispthm}
Setting $\sigma =:2\eps \hat \sigma$, $\lambda_j=:\eps^2 \hat \lambda_j$ in accordance with the Ginzburg Landau 
scaling \eqref{dscale}, $\lambda_j$ as in \eqref{lamexp}, we obtain expansions 
\be\label{exactdisp}
	 		 		 	\begin{split}
	&\hat\lambda_{1}(\hat\sigma)=-\frac{4}{3}(1-16\omega^2)
			-\frac{32(1+16\omega^2)}{3(1-16\omega^2)}\hat\sigma^2+\mathcal O(|\hat\sigma|^3 + \eps(1+\hat \sigma)),\\
	 		 		 	&\hat\lambda_{2}(
	 		 		\hat\sigma)=-\frac{32(1-48\omega^2)}{3(1-16\omega^2)}\hat\sigma^2
		+\mathcal O(|\hat\sigma^3| +\eps \hat \sigma^2).
	 		 		 		\end{split}
	 		 		 			\ee
for $|\hat\sigma|<<1$,
agreeing to lowest order with the corresponding expansions for the associated Ginzburg Landau 
approximation (cf. \eqref{gldisp}).
Moreover, for $|\hat \sigma |\leq C$ and $\omega \in I_E^{int}$,
$\lambda_j$ are real for $\eps<<1$.
\et

\begin{proof}
Given in Section \ref{s:GLcompare}.
\end{proof}

\subsection{Discussion and open problems}\label{s:discussion}
The dispersion relation \eqref{exactdisp} agrees to lowest order with 
\be\label{formal}
\hat \lambda_j(\hat \sigma)\in \Sp
\bp -\frac{32}{3}\hat \sigma^2-\frac{4}{3}(1-16\omega^2) & \frac{64}{3}i\omega\hat\sigma\\ -\frac{64}{3}i\omega\hat \sigma& -\frac{32}{3}\hat \sigma^2 \ep,
\ee
a self-adjoint matrix eigenvalue problem coming from the spectral stability analysis of the approximating Ginzburg Landau
equation linearized about the periodic solution corresponding to wave number $\omega$, for which $\hat \lambda_j$ are
evidently real; see \eqref{gldisp}.
(As described in Section \ref{s:ccred}, the spectral stability problem for the 
Ginzburg Landau equation is reducible to a constant-coefficient analysis; see, for example, \cite{TB}.)

In the Swift--Hohenberg case \cite{M1,M2,S1}, the linearized operator for the full system is self-adjoint, and
so it is known a priori that {\it all} exact eigenvalues are real-valued as well.
In the present (general) situation, this property is replaced by reflection-symmetry of the linearized system, 
which persists even when self-adjointness is lost.
By the same argument applied to the constant-coefficient problem below \eqref{disp}, plus simplicity of eigenvalues
$\lambda_j(\sigma)$ at $\sigma=0$, it follows
that the two small eigenvalues $\lambda_j(\sigma)$ remain real so long as they remain
distinct.
However, the rest may be real or complex, depending on the particular system, as may be readily seen even for the constant-coefficient case described in \eqref{disp}.
Moreover, as $\lambda_j(0)$ differ only to order $\eps^2$, we cannot conclude by this argument reality for $\sigma >> \eps^2$,
in particular not for the range $|\sigma|\leq \sigma_0$ of our main analysis.

This has implications for the stability argument, as we can therefore not use the approach of \cite{M1,M2,S1}
on the intermediate regime $\eps^2<< |\sigma| \leq \sigma_0$ of studying the sign of the better-behaved product
$\lambda_1\lambda_2$ rather than the real parts of individual eigenvalues $\lambda_j$.
We substitute for this a different argument subdividing into cases (i) $|\sigma|<< \eps^2$, 
(ii) $\eps^2/C\leq |\sigma|\leq C\eps^2$, and (iii) $|\sigma|>>\eps^2$, treating (i) and (iii) 
by $|\hat \sigma|\to 0$ and $|\hat \sigma |\to \infty$ asymptotics
and (ii) by continuity from the Ginzburg-Landau approximation.
In this way we obtain finally, by a rather different and more complicated route, all of the information regarding 
the critical modes $\lambda_j$ that was obtained in \cite{M1,M2,S1} for the Swift--Hohenberg case.

In particular, as in \cite{M1,M2,S1}, we rigorously validate the predictions of the formal Ginzburg Landau approximation,
both for existence and stability of periodic solutions bifurcating from a constant, homogeneous state.
In \cite{M1,M2,S1}, this was done by a posteriori comparison of the two sets of results, obtained by 
apparently quite different computations.
Here, with an eye toward greater generality, we explore this issue further, seeking an equivalence also at the
level of computations.
Namely, we show in Section \ref{s:GLcompare} that {\it after appropriate preconditioning,} the two processes of
Ginzburg Landau approximation and rigorous Lyapunov--Schmidt reduction may be matched exactly at each order of
$\eps$ up the the top order $\eps^3$ involved in the Ginzburg Landau expansion: that is, the two methods involve 
solving identical sets of equations with identical compatibility conditions at each step.
This not only verifies their correspondence, but indicates the mechanism by which it arises.

The preconditioning steps are, on the Lyapunov--Schmidt side, to impose the Ginzburg Landau scaling \eqref{dscale},
and, on the Ginzburg Landau side, to make 
the ansatz $A=A_{\omega}+e^{i\omega\hat x}(b_r-ib_i)$, 
where $A_{\omega}(\hat x)=\sqrt{\frac{2}{3}(1-16\omega^2)}e^{i\omega\hat x}$
is the background periodic solution,
observing that this reduces the linearized equations to constant coefficients and the operator $\partial_{\hat x}$ to
multiplication by $i\omega$.
With these adjustments, the correspondence between the two analyses is then revealed.

The latter observation so far as we know is new, and appears to point the way to a more general proof of correspondence not requiring computations; to carry this out in detail would be a very interesting direction for future study.  
The correspondence so obtained is at formal level: at $\eps^3$ order in the reduced spectral problem \eqref{formal},
ignoring higher-order truncation errors.
A second important open problem is to carry out a rigorous analysis, 
as here and in \cite{M1,M2,S1}, (i) verifying that these higher-order truncation errors result in acceptable, higher-order
approximation errors in the resulting eigenmodes $\hat\lambda_j(\hat \sigma)$, and (ii) justifying by separate analysis the
reduction to the ``Ginzburg Landau regime'' $|\hat \lambda, \hat \sigma|\leq C$.
This would have the important contribution of validation/illumination of the 
formal Ginzburg Landau approximation, commonly used
without proof to study stability in studies both mathematical and physical.
See for example \cite{PP-G,GLSS} and references therein.
We hope that our analysis here will serve as a useful blueprint for this more general case.


\section{Existence of periodic solutions from Turing instability}\label{s:existence}
In this section we study existence of periodic solutions, carrying out the proof of Theorem \ref{existence}.
Setting $D_1=4$, $D_2=16$ and $a=2$ (just for convenience, no chemical reason), the Brusselator model is
\be \label{Brusselator model}
\begin{split}
\partial_t u_1 & = 4\partial_x^2u_1 +2-(\b+1)u_1+u_1^2u_2 \\
\partial_t u_2 & =16\partial_x^2 u_2+ \b u_1-u_1^2u_2
\end{split}
\ee
Then the uniform state is $u^*=(2, \frac{\b}{2})$, where $\b$ is the bifurcating parameter. In this section, we consider existence of periodic solutions bifurcating from $u^*=(2, \frac{\b}{2})$ by Turing instability.

\bigskip

We start with the Turing instability. Linearizing the Brusselator model around $u^*$, we have the Jacobian matrix
\be
A=df(u^*)= \bp \b-1 & 4 \\ -\b & -4 \ep.
\notag
\ee
 To satisfy the conditions of Turing instability, $A$ is stable, that is, $det A = 4 >0$ and $trA <0$ which means $\b<5$. In order to find the critical value $\b_c$ (that is, $u^*$ is not stable anymore at $\b=\b_c$ by adding diffusion terms, we consider $det (A-k^2D)=0$ for some wave number $k \neq 0$. By a simple calculation, Turing instability occurs at $\b=4$ with the corresponding wave number $k=\pm \frac{1}{2}$. (That is, the parameter $\b$ should satisfy $4< \b < 5$ and $\b_c= 4$).

\bigskip

Now, we consider the normalized model (obtained by translating $u_1 \longrightarrow u_1+2$, $u_2 \longrightarrow u_2+\frac{\beta}{2}$, $\beta \longrightarrow b+4$):
\be \label{New Brusselator model}
\begin{split}
\partial_t u_1 & = 4\partial_x^2u_1 + (b+3)u_1 + 4u_2 +(\frac{b}{2}+2)u_1^2 + 4u_1u_2+ u_1^2 u_2 ,\\
\partial_t u_2 & =16\partial_x^2u_2 - (b+4)u_1 - 4u_2 -(\frac{b}{2}+2)u_1^2 - 4u_1u_2- u_1^2 u_2 .
\end{split}
\ee
Then  Turing instability occurs at $b=0$ with the corresponding wave number $k=\pm \frac{1}{2}$ and we consider a two-paramametric $(b, k)$ family of stationary solutions $\tilde u_{b,k}$ which bifurcate for $b=0$ from $u^*=(0,0)$. In order to show that there are bifurcating periodic stationary solutions from $u^*=(0, 0)$, we use the Lyapunov-Schmidt reduction.  Let $\tilde u(b,k,\xi)=(\tilde u_1, \tilde u_2)(b,k,\xi)$ be $2\pi$ - periodic in $\xi$ where $\xi=kx$ (that is, we assume $\frac{2\pi}{k}$ - periodic in $x$). We will look at the expression of the periodic solution $\tilde u$ in a neighborhood of $(b,k,u)=(0, \pm\frac{1}{2}, (0, 0))$. By \eqref{New Brusselator model},  $\tilde u$ satisfies
\be \label{B model}
0=N(b, k, \tilde u):=k^2D\partial_\xi^2 \tilde u + f(\tilde u),
\ee
where  $N: \RR^2 \times H^2_{per}([0,2\pi], \RR^2) \longrightarrow  L^2_{per}([0,2\pi], \RR^2) $ is a $C^{\infty}$ mapping and
\be
D= \bp 4 & 0 \\ 0 & 16 \ep,  \quad  \quad f(\tilde u)=\bp f_1(\tilde u_1, u_2) \\ f_2(\tilde u_1, \tilde u_2) \ep = \bp  (b+3)u_1 + 4u_2 +(\frac{b}{2}+2)u_1^2 + 4u_1u_2+ u_1^2 u_2  \\   -(b+4)u_1 - 4u_2 -(\frac{b}{2}+2)u_1^2 - 4u_1u_2- u_1^2 u_2  \ep.
\notag
\ee

\subsection{The Lyapunov-Schmidt reduction for the equation \eqref{B model}}

\noindent We first sketch the Lyapunov-Schmidt reduction for the equation \eqref{B model}. Since $N(0, \pm\frac{1}{2}, \bp 0 \\ 0 \ep) =0$, we want to study the stationary periodic solutions of the equation \eqref{B model} in a neighborhood of $(0, \pm\frac{1}{2}, \bp 0 \\ 0 \ep)$ in $\RR^2 \times H^2_{per}([0,2\pi], \RR^2)$. We define
\be
L_{per}:=\partial_{\tilde u}N(0, \pm\frac{1}{2}, \bp 0 \\ 0 \ep) = \ds\frac{1}{4}D\partial_\xi^2+A_c,
\ee
where
\be
D= \bp 4 & 0 \\ 0 & 16 \ep,  \quad A_c = \bp 3 & 4 \\ -4 & -4 \ep .
\ee
If $L_{per}$ is invertible and $L^{-1}_{per}$ is bounded from $L^2_{per}([0,2\pi], \RR^2)$ to $H^2_{per}([0,2\pi], \RR^2)$, then by the Implicit Function Theorem, in a neighborhood of $(0, \pm\frac{1}{2}, \bp 0 \\ 0 \ep)$, there exist a unique solution $\tilde u(\xi) = \phi (b,k)$ satisfying \eqref{B model} for some $C^{\infty}$ function $\phi$. In this case, however, $L_{per}$ is not invertible, so we apply the Lyapunov-Schmidt reduction. We first denote the kernel and range of $L_{per}$ by $\ker (L_{per})$ and $\ran (L_{per})$, respectively. Moreover, we assume the decompositions:
\be
H^2_{per}([0,2\pi], \RR^2)=\ker(L_{per}) \oplus X_1 \quad \text{and}  \quad L^2_{per}([0,2\pi], \RR^2) = \ran(L_{per}) \oplus Y_1,
\ee
where $X_1$ and $Y_1$ are topological complements of $\ker(L_{per})$ and $\ran (L_{per})$ in $H^2_{per}([0,2\pi], \RR^2)$ and $L^2_{per}([0,2\pi], \RR^2)$. Then there are two continuous projection $P:H^2_{per}([0,2\pi], \RR^2) \longrightarrow H^2_{per}([0,2\pi], \RR^2)$ and $Q : L^2_{per}([0,2\pi], \RR^2)  \longrightarrow L^2_{per}([0,2\pi], \RR^2)$ such that
\be \label{condition of projections}
\ran (P)=\ker (L_{per}) \quad \text{and} \quad \ker(Q)=\ran(L_{per}),
\ee
that is, $P(H^2_{per}([0,2\pi], \RR^2))=\ker(L_{per})$ and $(I-Q)(L^2_{per}([0,2\pi], \RR^2))=\ran(L_{per})$. Now we decompose $\tilde u - \bp 0 \\ 0 \ep \in H^2_{per}([0,2\pi], \RR^2)$ into $U + V$, where $u=P\Big(\tilde u - \bp 0 \\ 0 \ep \Big) \in \ker (L_{per})$ and $V=(I-P)\Big(\tilde u - \bp 0 \\ 0 \ep \Big) \in X_1$. Then one can rewrite the equation \eqref{B model} as
\be\label{two equations}
QN(b, k,  U +V) = 0, \quad (I-Q)N(b, k,  U +V) = 0.
\ee
We first focus on the second equation. Defining
\be
G(b, k, U, V) := (I-Q)N(b, k, U +V) = 0,
\ee
notice that $\ds G(0, \pm\frac{1}{2}, 0,0)=(I-Q)N(0, \pm\frac{1}{2}, \bp 0 \\ 0 \ep)=0$ and
\be
\partial_V G(0, \pm \frac{1}{2}, 0, 0)=(I-Q)\partial_{\tilde u}N(0, \pm\frac{1}{2}, \bp 0 \\ 0 \ep)= (I-Q)L_{per}=L_{per}.
\ee
Since $L_{per}:(I-P)H^2_{per}([0,2\pi], \RR^2)  \longrightarrow (I-Q)L^2_{per}([0,2\pi], \RR^2)$ is bijective, applying the Implicit Function Theorem, $G(b,k,U,V)$ can be solved for $V$ in $(I-P)H^2_{per}([0,2\pi], \RR^2)$ as a function of $(b,k,U)$. More precisely, there exist an open neighborhood $\Omega$ of $U=\bp 0 \\ 0 \ep$ in $\ker(L_{per})$, an open neighborhood $\Gamma$ of $(b,k)=(0, \pm\frac{1}{2})$ in $\RR^2$, and a $C^{\infty}$ function $\Phi : \Omega \times \Gamma \longrightarrow \ker(P) (=X_1) $ such that $\Phi (0, \pm\frac{1}{2}, 0) = 0$ and
\be
(I-Q)N(b,k,U+\Phi (b, k, U) ) \equiv 0,
\ee
for all $(b,k,U) \in \Gamma \times \Omega$. We now substitute $V=\Phi(b,k,U)$ into the first equation of \eqref{two equations} in order to obtain the bifurcation equation:
\be \label{GFE}
QN(b,k, U +\Phi (b, k, U) )= 0.
\ee
Setting
\be
B(b,k,U)=QN(b,k, U +\Phi (b, k, U) ),
\ee
B is a $C^{\infty}$ function from $\Gamma \times \Omega$ to $Y_1$ which has a finite dimension, $B(0, \pm\frac{1}{2},0)=0$ and $\partial_U B(0, \pm\frac{1}{2}, 0)=0$. Actually, solving \eqref{GFE} is equivalent to solving the original equation \eqref{B model}, that is, it is enough to solve the finite -dimensional problem $B(b,k,U)=0$ locally in $\RR^2 \times \ker( L_{per})$.

\begin{remark}
In the above argument, $(I-P)H^2_{per}([0,2\pi], \RR^2) =  \ker(P) = X_1$
\end{remark}

\subsection{Periodic solutions $\tilde u$ of \eqref{B model}}

By linearization of \eqref{B model} about $u^* =(0, 0)$, we have
\be \label{linearization}
\partial_{\tilde u}N(b, k, u^*)[U]=k^2D\partial_\xi^2 U+AU=(k^2D\partial_\xi^2+A)U,
\ee
where
\be
A=df(u^*)= \bp b+3 & 4 \\ -(b+4) & -4 \ep \quad \text{and} \quad U=\tilde u - u^*  \in \RR^2.
\notag
\ee
In particular, putting $b=b_c=0$, $k^2=\frac{1}{4}$, we have $A_c=\bp 3 & 4 \\ -4 & -4\ep$ and
\be \label{linear}
L_{per}U:=\partial_{\tilde u}N\left(0,\pm\frac{1}{2}, (0,0) \right)[U]=\Big(\frac{1}{4}D\partial_\xi^2+A_c\Big)U,
\ee
and the adjoint of $L_{per}$:
\be \label{adjoint of linear}
L^*_{per}U:= \Big(\frac{1}{4}D\partial_\xi^2+A_c^T \Big)U, \quad A_c^T: \text{transpose of $A_c$}.
\ee
Then the kernels of $L_{per}$ and $L^*_{per}$ are spanned by
\be \label{kernel}
U_1(\xi)=\cos \xi\bp 2 \\ -1 \ep \quad  \text{and} \quad U_2(\xi)=\sin \xi\bp 2 \\ -1 \ep,
\ee
\be \label{kernel of adjoint}
\bar U_1(\xi)=\cos \xi\bp 2 \\ 1 \ep \quad  \text{and} \quad \bar U_2(\xi)=\sin \xi\bp 2 \\ 1 \ep.
\ee
Now, in order to use Lyapunov-Schmidt reduction, we first define the zero eigenprojection
\be \label{projection P}
Qu= \langle \bar U_1, u\rangle U_1+\langle \bar U_2, u\rangle U_2, \quad \text{where} \quad \langle u, v \rangle = \frac{1}{3\pi} \int_0^{2\pi}u \cdot v d\xi,
\ee
and define the mapping
\be \label{projection tilde P}
\tilde  Q: L^2_{per}([0,2\pi], \RR^2) \rightarrow \RR^2 ; u \mapsto (\langle \bar U_1, u\rangle, \langle \bar U_2, u\rangle)^{T},
\ee
that is, $\tilde Q$ is just a vector form in $\RR^2$ of the projection $Q$.  
Decomposing $\tilde u  \in H^2_{per}([0,2\pi], \RR^2)$ into $\a_1 U_1+\a_2 U_2 + V$, where $PV=0$, we see that the linearization \eqref{linear} is invertible on $(I-P)H^2_{per}([0,2\pi], \RR^2)$. Moreover, recalling \eqref{B model}, we have
\be \label{projection equations}
\begin{split}
& \tilde  QN(b,k, \a_1 U_1+\a_2 U_2 + V )=0, \\
& (I-Q)N(b,k, \a_1 U_1+\a_2 U_2 + V)=0,
\end{split}
\ee
where
\be
(I-Q)N(b,k, \a_1 U_1+\a_2 U_2 + V):\mathbb{R}^4\times\ran(I-P)\to\ran(I-Q).
\ee

By the Implicit Function Theorem, there exists  an open neighborhood $U\subset\mathbb{R}^4$ of $(0,\frac{1}{2},0,0)$ and a unique function $V: U\to(I-P)H^2_{per}([0,2\pi], \RR^2)$ that solves the second equation of \eqref{projection equations} for $(b,k,\alpha_1,\alpha_2)\in U$. After we substitute $V$ into the first equation of \eqref{projection equations}, the reduced equation (or the bifurcation equation) will be $O(2)$ equivariant. This is due to the fact that the original problem is translation invariant and reflection symmetric. Hence, we can conclude that the reduced equation is of the form

\be\label{be}
 f(b,k,|\a|^2)\bp \a_1 \\ \a_2 \ep=0,
 \ee

  $f$ is a real-valued scalar function (c.f. \cite[Chapters 2,5]{CL}).

Next, let us find asymptotic expansion of $V$ with respect to parameter $\alpha_1$ and set $\alpha_2=0$.\\
1.) First of all, it is clear that $V(b,k,0)=0$.\\
2.) Now, we differentiate the second equation of \eqref{projection equations} with respect to $\alpha_1$.
\be\label{alpha1}
\begin{split}
& \partial_{\alpha_1}(I-Q)N(b,k, \a_1 U_1 + V)=(I-Q)[(k^2D\partial_\xi^2+ A)(U_1+\partial_{\alpha_1}V)\\&+  \bp (b+4) u_1+ 4 u_2+2 u_1  u_2  & 4 u_1+ u_1^2 \\ -(b+4) u_1- 4 u_2-2 u_1  u_2  & -4 u_1- u_1^2 \ep (U_1+\partial_{\alpha_1}V)]=0,
\end{split}
\ee
where $\bp u_1 \\ u_2 \ep=\a_1 U_1 + V$.\\
Hence, by step 1.),
\be\label{partial1}
\begin{split}
& \partial_{\alpha_1}(I-Q)N(b,k, \a_1 U_1 + V)|_{\alpha=0}=(I-Q)[(k^2D\partial_\xi^2+ A)(U_1+\partial_{\alpha_1}V|_{\alpha=0})=0.
\end{split}
\ee
Formula \eqref{partial1} implies that $(I-Q)[(k^2D\partial_\xi^2+ A)\partial_{\alpha_1}V|_{\alpha=0}=-(I-Q)[(k^2D\partial_\xi^2+ A)U_1$.\\
Notice that \be\label{U1}
D\bp 2 \\ -1 \ep = 8 \bp 1 \\ -2 \ep , \quad A\bp 2 \\ -1 \ep = \bp 2b+2 \\ -2b-4 \ep,
\ee

\be\label{QU1}
Q  k^2D\partial^2_\xi(  \a_1 U_1) =0, \quad Q A ( \a_1 U_1 ) = \frac{2b\a_1}{3} \bp 2 \\ -1 \ep \cos \xi,
\ee

\be
\begin{split}
&(I-Q) k^2D\partial^2_\xi(  \a_1 U_1) =k^2D\partial^2_\xi(  \a_1 U_1)=-8k^2\a_1  \bp 1 \\ -2 \ep \cos \xi ,\\
& (I-Q) A ( \a_1 U_1 ) = \frac{2b+6}{3}  \a_1\bp 1 \\ -2 \ep \cos \xi ,
\end{split}
\ee
Therefore,
\be\label{partial11}
(I-Q)[(k^2D\partial_\xi^2+ A)\partial_{\alpha_1}V|_{\alpha=0}=-(-8k^2+\frac{2b+6}{3})\bp 1 \\ -2 \ep \cos \xi.
\ee
Since $\mathbb{R}^2\cos \xi$ is an invariant subspace for the invertible operator $(I-Q)(k^2D\partial_\xi^2+ A)(I-P)$, $\partial_{\alpha_1}V|_{\alpha=0}$ is of the form $\bp a \\ b \ep \cos \xi$. Also, since $\partial_{\alpha_1}V|_{\alpha=0}\in\ran(I-P)$, $\partial_{\alpha_1}V|_{\alpha=0}\in\ker P$. Therefore, $\bp a \\ b \ep$ should be orthogonal to the vector $\bp 2 \\ 1 \ep$, which means that $\partial_{\alpha_1}V|_{\alpha=0}=f_1\bp 1 \\ -2 \ep \cos \xi$. \\
Next, note that
\be
D\bp 1 \\ -2 \ep =  \bp 4 \\ -32 \ep , \quad A\bp 1 \\ -2 \ep = \bp b-5 \\ -b+4 \ep.
\ee

\be\label{1,-8}
Q \Big[D\bp 1 \\ -2 \ep \cos \xi \Big] =  -8   \bp 2 \\ -1 \ep\cos \xi, \quad (I-Q) \Big[D\bp 1 \\ -2 \ep \cos \xi \Big] =  20  \bp 1 \\ -2 \ep\cos \xi
\ee

\be\label{1,-2}
Q \Big[A\bp 1 \\ -2 \ep \cos \xi \Big] = \frac{b-6}{3}  \bp 2 \\ -1 \ep\cos \xi,  \quad (I-Q) \Big[A\bp 1 \\ -2 \ep \cos \xi \Big] = \frac{b-3}{3} \bp 1 \\ -2 \ep \cos \xi.
\ee

Using \eqref{partial11}-\eqref{1,-2}, we derive that
\be
(-20k^2 + \frac{b-3}{3})f_1=-(-8k^2+\frac{2b+6}{3}).
\ee
Hence,
\be
\partial_{\alpha_1}V|_{\alpha=0}=\frac{-8k^2+\frac{2b+6}{3}}{20k^2 - \frac{b-3}{3}}\bp 1 \\ -2 \ep \cos \xi.
\ee

So far, we have shown that $V(b,k,\a_1,0)=f_1\cos \xi\bp 1 \\ -2 \ep \alpha_1+\mathcal{O}(|\alpha_1|^2)$.\\
3.) Next, we would like to compute $\partial^2_{\alpha_1}V|_{\alpha=0}$.\\
We differentiate \eqref{alpha1} with respect to $\alpha_1$.\\
\be\label{alpha1^2}
\begin{split}
& \partial^2_{\alpha_1}(I-Q)N(b,k, \a_1 U_1 + V)=(I-Q)[(k^2D\partial_\xi^2+ A)\partial^2_{\alpha_1}V\\&+  \bp (b+4)+2 u_2  & 0 \\ -(b+4) -2   u_2  & 0 \ep (U_1+\partial_{\alpha_1}V)^{\circ2}+ \bp 4 +2 u_1  & 4 + 2u_1 \\ - 4 -2 u_1  & -4 - 2u_1 \ep \left\{(\hat U_1+\partial_{\alpha_1}\hat V)\circ(U_1+\partial_{\alpha_1}V)\right\}\\
&+\bp (b+4) u_1+ 4 u_2+2 u_1  u_2  & 4 u_1+ u_1^2 \\ -(b+4) u_1- 4 u_2-2 u_1  u_2  & -4 u_1- u_1^2 \ep \partial^2_{\alpha_1}V]=0,
\end{split}
\ee
where $\circ$ indicates the Hadamard product sign.
Therefore,
\be\label{partial1^2}
\begin{split}
& \partial^2_{\alpha_1}(I-Q)N(b,k, \a_1 U_1 + V)|_{\alpha=0}=(I-Q)[(k^2D\partial_\xi^2+ A)\partial^2_{\alpha_1}V\\&+  \bp (b+4)  & 0 \\ -(b+4)    & 0 \ep (U_1+\partial_{\alpha_1}V|_{\alpha=0})^{\circ2}+ \bp 4   & 4  \\ - 4   & -4  \ep \left\{(\hat U_1+\partial_{\alpha_1}\hat V)\circ(U_1+\partial_{\alpha_1}V)\right\}]=0.
\end{split}
\ee

Notice that
\be\label{cos^2}
\begin{split}
& U_1+\partial_{\alpha_1}V|_{\alpha=0}=\bp 2 \\ -1 \ep \cos \xi+f_1\bp 1 \\ -2 \ep \cos \xi=\bp 2+f_1 \\ -1-2f_1 \ep \cos \xi,\\
&(U_1+\partial_{\alpha_1}V|_{\alpha=0})^{\circ2}=\bp (2+f_1)^2 \\ (-1-2f_1)^2 \ep \cos^2 \xi,\\
&(\hat U_1+\partial_{\alpha_1}\hat V|_{\alpha=0})\circ(U_1+\partial_{\alpha_1}V|_{\alpha=0})=\bp -1-2f_1 \\ 2+f_1 \ep \cos \xi\bp 2+f_1 \\ -1-2f_1 \ep \cos \xi\\
&=\bp(-1-2f_1)( 2+f_1) \\ (-1-2f_1)( 2+f_1) \ep \cos \xi=\bp-2-5f_1-2f_1^2 \\ -2-5f_1-2f_1^2  \ep \cos^2 \xi.
\end{split}
\ee
Hence,
\be
\begin{split}
&\bp (b+4)  & 0 \\ -(b+4)    & 0 \ep (U_1+\partial_{\alpha_1}V|_{\alpha=0})^{\circ2}+ \bp 4   & 4  \\ - 4   & -4  \ep \left\{(\hat U_1+\partial_{\alpha_1}\hat V)\circ(U_1+\partial_{\alpha_1}V)\right\}\\
&=(b+4)\bp (2+f_1)^2 \\- (2+f_1)^2 \ep \cos^2 \xi+8\bp-2-5f_1-2f_1^2 \\ 2+5f_1+2f_1^2  \ep \cos^2 \xi\\
&=((b-12)f_1^2+(4b-24)f_1+4b)\bp 1 \\ -1  \ep \cos^2 \xi.
\end{split}
\ee
It is easy to see that 
\be
Q \Big[\bp 1 \\ -1 \ep \Big] = 0, \quad Q \Big[\bp 1 \\ -1 \ep \cos 2\xi \Big] = 0.
\ee
Therefore, using \eqref{partial1^2}, we obtain
\be
\begin{split}\label{formula1^2a}
&(I-Q)(k^2D\partial_\xi^2+ A)\partial^2_{\alpha_1}V|_{\alpha=0}=-((b-12)f_1^2+(4b-24)f_1+4b)\bp 1 \\ -1  \ep \cos^2 \xi.
\end{split}
\ee
Since $\mathbb{R}^2$ and $\mathbb{R}^2\cos 2\xi$ are invariant subspaces for the invertible operator $(I-Q)(k^2D\partial_\xi^2+ A)(I-P)$, $\partial^2_{\alpha_1}V|_{\alpha=0}$ is of the form $\bp a \\ b \ep +\bp \tilde a \\ \tilde b \ep \cos 2\xi$. Note that $\bp a \\ b \ep$ and $\bp \tilde a \\ \tilde b \ep \cos 2\xi$ belong to $\ran(I-P)$. It follows from \eqref{formula1^2a} that
\be
\begin{split}
& A\bp a \\ b \ep=-\frac{1}{2}f_2\bp 1 \\ -1  \ep,\\
& (I-Q)(k^2D\partial_\xi^2+ A)\bp \tilde a \\ \tilde b \ep \cos 2\xi=-\frac{1}{2}f_2\bp 1 \\ -1  \ep\cos 2\xi,
\end{split}
\ee
where $f_2=(b-12)f_1^2+(4b-24)f_1+4b$.
Therefore,
\be\label{ab}
\bp a \\ b \ep=-\frac{1}{4}f_2\bp 0 \\ 1  \ep. 
\ee
And\\
\be
\begin{split}
& (-4k^2\bp 4\tilde a \\ 16\tilde b \ep+\bp (b+3)\tilde a+4\tilde b \\ -(b+4)\tilde a-4\tilde b \ep) \cos 2\xi=-\frac{1}{2}f_2\bp 1 \\ -1  \ep\cos 2\xi.
\end{split}
\ee
Hence, 
\be\label{abtilde}
\begin{split}
\bp \tilde a \\ \tilde b \ep =\frac{f_2}{-256k^4+16bk^2+32k^2-1}\bp -16k^2 \\ \frac{1+16k^2}{4} \ep.
\end{split}
\ee
Collecting terms from \eqref{ab} and \eqref{abtilde}, we arrive at 
\be
\begin{split}
 &\partial^2_{\alpha_1}V|_{\alpha=0}=-\frac{1}{4}f_2\bp 0 \\ 1  \ep+\bp \tilde a \\ \tilde b \ep\cos 2\xi.
 \end{split}
 \ee

4.) Next, we compute  $\partial^3_{\alpha_1}V|_{\alpha=0}$.\\
We differentiate \eqref{alpha1^2} with respect to $\alpha_1$.\\
\be\label{alpha1^3}
\begin{split}
& \partial^3_{\alpha_1}(I-Q)N(b,k, \a_1 U_1+ V)=(I-Q)[(k^2D\partial_\xi^2+ A)\partial^3_{\alpha_1}V\\
&+  \bp 2 \partial_{\alpha_1}u_2  & 0 \\ -2  \partial_{\alpha_1}  u_2  & 0 \ep (U_1+\partial_{\alpha_1}V)^{\circ2}+\bp (b+4)+2 u_2  & 0 \\ -(b+4) -2   u_2  & 0 \ep2\left\{(U_1+\partial_{\alpha_1}V)\circ\partial^2_{\alpha_1}V\right\}
\\&+ \bp 2\partial_{\alpha_1} u_1  & 2\partial_{\alpha_1}u_1 \\-2\partial_{\alpha_1} u_1  & - 2\partial_{\alpha_1}u_1 \ep \left\{(\hat U_1+\partial_{\alpha_1}\hat V)\circ(U_1+\partial_{\alpha_1}V)\right\}\\
&+\bp 4 +2 u_1  & 4 + 2u_1 \\ - 4 -2 u_1  & -4 - 2u_1 \ep [\partial^2_{\alpha_1}\hat V\circ(U_1+\partial_{\alpha_1}V)+(\hat U_1+\partial_{\alpha_1}\hat V)\circ\partial^2_{\alpha_1}V]\\
&+\bp (b+4)\partial_{\alpha_1} u_1+ 4\partial_{\alpha_1} u_2+2 \partial_{\alpha_1}u_1  u_2+2u_1\partial_{\alpha_1}  u_2  & 4\partial_{\alpha_1} u_1+ 2u_1\partial_{\alpha_1}u_1 \\ -(b+4)\partial_{\alpha_1} u_1- 4\partial_{\alpha_1} u_2-2 \partial_{\alpha_1}u_1  -u_2-2u_1\partial_{\alpha_1}  u_2  & -4\partial_{\alpha_1} u_1- 2u_1\partial_{\alpha_1}u_1  \ep \partial^2_{\alpha_1}V\\
&+\bp (b+4) u_1+ 4 u_2+2 u_1  u_2  & 4 u_1+ u_1^2 \\ -(b+4) u_1- 4 u_2-2 u_1  u_2  & -4 u_1- u_1^2 \ep \partial^3_{\alpha_1}V]=0.
\end{split}
\ee

Therefore,
\be\label{partial1^3}
\begin{split}
& \partial^3_{\alpha_1}(I-Q)N(b,k, \a_1 U_1 + V)|_{\alpha=0}=(I-Q)[(k^2D\partial_\xi^2+ A)\partial^3_{\alpha_1}V\\
&+  \bp -2(1+2f_1)\cos \xi  & 0 \\ 2(1+2f_1)\cos \xi  & 0 \ep \bp (2+f_1)^2 \\ (1+2f_1)^2 \ep \cos^2 \xi\\
&+2\bp (b+4)  & 0 \\ -(b+4)  & 0 \ep \left\{\bp 2+f_1 \\ -1-2f_1 \ep \cos \xi\circ(-\frac{1}{4}f_2\bp 0 \\ 1  \ep+\bp \tilde a \\ \tilde b \ep\cos 2\xi)\right\}
\\&+ \bp 2(2+f_1)\cos \xi  & 2(2+f_1)\cos \xi \\-2(2+f_1)\cos \xi   & - 2(2+f_1)\cos \xi \ep \bp-2-5f_1-2f_1^2 \\ -2-5f_1-2f_1^2  \ep \cos^2 \xi\\
&+\bp 4   & 4  \\ - 4   & -4  \ep [(-\frac{1}{4}f_2\bp 1 \\ 0  \ep+\bp \tilde b \\ \tilde a \ep\cos 2\xi)\circ\bp 2+f_1 \\ -1-2f_1 \ep \cos \xi\\
&+\bp -1-2f_1 \\ 2+f_1 \ep \cos \xi\circ(-\frac{1}{4}f_2\bp 0 \\ 1  \ep+\bp \tilde a \\ \tilde b \ep\cos 2\xi)]\\
&+\bp (b+4)(2+f_1)\cos \xi- 4(1+2f_1)\cos \xi  & 4(2+f_1)\cos \xi \\ -(b+4)(2+f_1)\cos \xi+ 4(1+2f_1)\cos \xi   & -4(2+f_1)\cos \xi  \ep (-\frac{1}{4}f_2\bp 0 \\ 1  \ep+\bp \tilde a \\ \tilde b \ep\cos 2\xi)]
\end{split}
\ee

\be\label{partial1^3c}
\begin{split}
&=(I-Q)[(k^2D\partial_\xi^2+ A)\partial^3_{\alpha_1}V-2(1+2f_1)(2+f_1)^2\bp 1 \\ -1 \ep (\frac{1}{4}\cos 3\xi+\frac{3}{4}\cos \xi)\\
& +2\bp (b+4)  & 0 \\ -(b+4)  & 0 \ep ( \frac{1}{4}f_2(1+2f_1)\bp 0 \\ 1  \ep\cos \xi+\bp (2+f_1)\tilde a \\ (-1-2f_1)\tilde b \ep\frac{1}{2}(\cos \xi +\cos 3\xi))
\\&-4(2+f_1)^2(1+2f_1) \bp 1 \\ -1  \ep (\frac{1}{4}\cos 3\xi+\frac{3}{4}\cos \xi)\\
&+\bp 4   & 4  \\ - 4   & -4  \ep [-\frac{1}{4}f_2(2+f_1)\bp 1 \\ 1  \ep\cos \xi+((2+f_1)\tilde b-(1+2f_1)\tilde a)\bp 1 \\ 1 \ep \frac{1}{2}(\cos \xi +\cos 3\xi)]\\
&-(2+f_1)f_2\bp 1 \\ -1  \ep\cos \xi+((b+4)(2+f_1)\tilde a- 4(1+2f_1)\tilde a+4(2+f_1)\tilde b)\bp 1 \\ -1  \ep\frac{1}{2}(\cos \xi +\cos 3\xi)]\\
&=(I-Q)[(k^2D\partial_\xi^2+ A)\partial^3_{\alpha_1}V+\bp 1 \\ -1 \ep\cos \xi\big(-\frac{9}{2}(1+2f_1)(2+f_1)^2+(b+4)(2+f_1)\tilde a-3(2+f_1)f_2\\&+4\{(2+f_1)\tilde b-(1+2f_1)\tilde a\}+\frac{1}{2}\{(b+4)(2+f_1)\tilde a- 4(1+2f_1)\tilde a+4(2+f_1)\tilde b\}\big)+\bp * \\ * \ep\cos 3\xi]\\
&=(I-Q)[(k^2D\partial_\xi^2+ A)\partial^3_{\alpha_1}V+f_3\bp 1 \\ -1 \ep\cos \xi+\bp * \\ * \ep\cos 3\xi]=0,
\end{split}
\ee
where 
\be\label{partial1^3cc}
\begin{split}
f_3&=-\frac{9}{2}(1+2f_1)(2+f_1)^2+(b+4)(2+f_1)\tilde a-3(2+f_1)f_2\\&+4\{(2+f_1)\tilde b-(1+2f_1)\tilde a\}+\frac{1}{2}\{(b+4)(2+f_1)\tilde a- 4(1+2f_1)\tilde a+4(2+f_1)\tilde b\}\\
&=-\frac{9}{2}(1+2f_1)(2+f_1)^2+\frac{3}{2}(b+4)(2+f_1)\tilde a-3(2+f_1)f_2\\&+6\{(2+f_1)\tilde b-(1+2f_1)\tilde a\}.
\end{split}
\ee
Next, it is easy to see that 
\be\label{qcos}
Q \Big[\bp 1 \\ -1 \ep\cos \xi \Big] = \frac{1}{3}U_1, \quad (I-Q) \Big[\bp 1 \\ -1 \ep \cos \xi \Big] = \frac{1}{3}\bp 1 \\ -2 \ep\cos \xi.
\ee
It follows from the last line of \eqref{partial1^3c} and \eqref{qcos} that
\be\label{partial1^3ccc}
\begin{split}
(I-Q)[(k^2D\partial_\xi^2+ A)\partial^3_{\alpha_1}V]=-\frac{1}{3}f_3\bp 1 \\ -2 \ep\cos \xi+\bp * \\ * \ep\cos 3\xi
\end{split}
\ee

Since $\mathbb{R}^2\cos \xi$ and $\mathbb{R}^2\cos 3\xi$ are invariant subspaces for the invertible operator $(I-Q)(k^2D\partial_\xi^2+ A)(I-P)$, $\partial^3_{\alpha_1}V|_{\alpha=0}$ is of the form $f_4\bp 1 \\ -2 \ep\cos \xi +\bp * \\ * \ep \cos 3\xi$.

Using \eqref{partial11}-\eqref{1,-2} and \eqref{partial1^3ccc}, we derive at
\be
(-20k^2 + \frac{b-3}{3})f_4=-\frac{1}{3}f_3.
\ee
Hence,
\be\label{partial1^3cccc}
\partial^3_{\alpha_1}V|_{\alpha=0}=\frac{\frac{1}{3}f_3}{20k^2 - \frac{b-3}{3}}\bp 1 \\ -2 \ep \cos \xi+\bp * \\ * \ep\cos 3\xi.
\ee

So far, we have shown that 
 \be
 \begin{split}\label{V}
 &V(b,k,\a_1,0)=f_1\cos \xi\bp 1 \\ -2 \ep \alpha_1+\frac{1}{2}(-\frac{1}{4}f_2\bp 0 \\ 1  \ep+\bp \tilde a \\ \tilde b \ep\cos 2\xi)\alpha_1^2\\
 &+\frac{1}{6}(\frac{\frac{1}{3}f_3}{20k^2 - \frac{b-3}{3}}\bp 1 \\ -2 \ep \cos \xi+\bp * \\ * \ep\cos 3\xi)\alpha_1^3+\mathcal{O}(|\a_1|^4).
 \end{split}
 \ee

In order to obtain the reduced equation, we substitute \eqref{V} into the first equation from \eqref{projection equations}, obtaining
for $\bp u_1 \\ u_2 \ep=\a_1 U_1 + V$ the equation

\be\label{QN}
 \tilde QN(b,k, \a_1 U_1 + V) =\tilde Q \Big[(k^2D\partial_\xi^2+ A)(\a_1 U_1 + V)
+  \bp  (\frac{b}{2}+2)u_1^2 + 4u_1u_2+ u_1^2 u_2  \\    -(\frac{b}{2}+2)u_1^2 - 4u_1u_2- u_1^2 u_2  \ep  \Big]=0.
\ee

Next, we split the left-hand side of \eqref{QN} into two parts.

a.) {\bf Linear part.}
In order to treat linear terms, we use the following computations.
\be
\begin{split}
	Q  k^2D\partial^2_\xi(  \a_1 U_1 ) &=0, \quad Q A ( \a_1 U_1 ) = \frac{2b\a_1}{3} \bp 2 \\ -1 \ep \cos \xi,\\
	Q\Big[D\bp 1 \\ -2 \ep \cos \xi \Big] &=  -8   \bp 2 \\ -1 \ep\cos \xi,\,\,\,
	Q \Big[A\bp 1 \\ -2 \ep \cos \xi \Big] = \frac{b-6}{3}  \bp 2 \\ -1 \ep\cos \xi.
\end{split}
\ee

Therefore, we have the following expression for the linear part from \eqref{QN}
\be
\begin{split}
& \tilde QN(b,k, \a_1 U_1+ V)=\tilde Q(k^2D\partial_\xi^2+ A)(\a_1 U_1 + V)= \frac{2}{3}b  \a_1 +( 8k^2+\frac{b-6}{3})f_1 \a_1\\& +
\frac{1}{6}( 8k^2+\frac{b-6}{3})\frac{\frac{1}{3}f_3}{20k^2 - \frac{b-3}{3}}  \a_1^3 +\mathcal{O}(|\a_1|^4).
\end{split}
\ee

b.) {\bf Non-linear part.}
We next treat the nonlinear terms
\be\label{nonlinear}
\begin{split}
&\tilde Q\bp  (\frac{b}{2}+2)u_1^2 + 4u_1u_2+ u_1^2 u_2  \\    -(\frac{b}{2}+2)u_1^2 - 4u_2u_2- u_1^2 u_2  \ep =\tilde Q\big( ((\frac{b}{2}+2)u_1^2 + 4u_1u_2+ u_1^2 u_2)\bp 1 \\ -1 \ep\big)\\
&=\tilde Q\big((\frac{b}{2}+2)\{(2+f_1)\a_1\cos \xi+\frac{1}{2}\tilde a\alpha_1^2\cos 2\xi\\
 &+\mathcal{O}(|\a_1|^3)\}^2 + 4\{(2+f_1)\a_1\cos \xi+\frac{1}{2}\tilde a\alpha_1^2\cos 2\xi\\
  &+\mathcal{O}(|\a_1|^3)\}\{(-1-2f_1)\a_1\cos \xi-\frac{1}{8}f_2\alpha_1^2+\frac{1}{2}\tilde b\alpha_1^2\cos 2\xi\\
    &+\mathcal{O}(|\a_1|^3)\}+ \{(2+f_1)\a_1\cos \xi+\mathcal{O}(|\a_1|^2)\}^2\{(-1-2f_1)\a_1\cos \xi\\
    &+\mathcal{O}(|\a_1|^2)\})\bp 1 \\ -1 \ep\big)\\
    &=\tilde Q\big((\frac{b}{2}+2)\{(2+f_1)\tilde a\a_1\alpha_1^2\cos \xi\cos 2\xi+\mathcal{O}(|\a_1|^4)\} \\&+ 4\{(2+f_1)\a_1\cos \xi(-\frac{1}{8}f_2\alpha_1^2+\frac{1}{2}\tilde b\alpha_1^2\cos 2\xi)\\
     &+\frac{1}{2}\tilde a\alpha_1^2\cos 2\xi(-1-2f_1)\a_1\cos \xi+\mathcal{O}(|\a_1|^4)\}\\
      &+ \{(2+f_1)^2(-1-2f_1)\a^3_1\cos^3 \xi+\mathcal{O}(|\a_1|^4)\})\bp 1 \\ -1 \ep\big).
\end{split}
\ee    
By direct computation, we have:
\be\label{fs}
\begin{split}
	Q\Big[D\bp 1 \\ -1 \ep \cos \xi \Big] = \frac{1}{3}U_1 ,\,\,\,
Q\Big[D\bp 1 \\ -1 \ep \cos \xi\cos 2\xi \Big] = \frac{1}{6}U_1 ,\,\,\,
Q\Big[D\bp 1 \\ -1 \ep \cos^3 \xi \Big] = \frac{1}{4}U_1.\,\,\,
\end{split}
\ee
   Using formulas \eqref{nonlinear} and \eqref{fs}, we have the following expression for the non-linear part from \eqref{QN}
\be
\begin{split}\label{f5}      
&\tilde Q\big( ((\frac{b}{2}+2)u_1^2 + 4u_1u_2+ u_1^2 u_2)\bp 1 \\ -1 \ep\big)\\
&=\frac{1}{6}(\frac{b}{2}+2)(2+f_1)\tilde a \a_1^3+4\{\frac{1}{-24}(2+f_1)f_2\a_1^3\\
&+\frac{1}{12}(2+f_1)\tilde b\a_1^3+\frac{1}{12}(-1-2f_1)\tilde a\a_1^3\}\\
&+\frac{1}{4}(2+f_1)^2(-1-2f_1)\a_1^3+\mathcal{O}(|\a_1|^5)\\
&=\frac{1}{6}f_5\a_1^3++\mathcal{O}(|\a_1|^5).
\end{split}
\ee
Hence, taking into account formulas \eqref{f5} and \eqref{be}, the reduced equation has the form:
\be\label{reduced}
\begin{split}
&\big\{ \frac{2}{3}b  +( 8k^2+\frac{b-6}{3})f_1   + 
\frac{1}{6}f_5\big(1+\frac{8k^2+\frac{b-6}{3}}{20k^2 - \frac{b-3}{3}} \big)(\a_1^2+\a_2^2)  +\mathcal{O}(|\a|^4)\big\}\bp \a_1 \\ \a_2 \ep=0.
\end{split}
\ee

From now on, we take without loss of generality $\a_2=0$ and $\a_1=\a$. Our goal is to solve \eqref{reduced} for $\a$ in terms of $b$ and $k$. 
Let us introduce $\mathcal{A}$:
\be
\begin{split}
\mathcal{A}=&\frac{\frac{2}{3}b  +( 8k^2+\frac{b-6}{3})f_1}{-\frac{1}{6}f_5\big(1+\frac{8k^2+\frac{b-6}{3}}{20k^2 - \frac{b-3}{3}}\big)}  ,
\end{split}
\ee
or
\be
\begin{split}
\mathcal{A}=&\frac{\frac{48k^2}{60k^2-b+3}}{-\frac{1}{6}f_5\big(1+\frac{8k^2+\frac{b-6}{3}}{20k^2 - \frac{b-3}{3}}\big)} (b-\frac{(4k^2-1)^2}{4k^2}) .
\end{split}
\ee
Notice that 
\be\label{2/3}
\begin{split}
\frac{\frac{48k^2}{60k^2-b+3}}{-\frac{1}{6}f_5\big(1+\frac{8k^2+\frac{b-6}{3}}{20k^2 - \frac{b-3}{3}}\big)}=\frac{2}{3}+\mathcal{O}((k-\frac{1}{2}),b) .
\end{split}
\ee
Solving \eqref{reduced} is equivalent to solving
\be\label{short}
\begin{split}
&\mathcal{A}-\a^2  +\mathcal{O}(|\a|^4)=0.
\end{split}
\ee
Next, plugging $\a=\sqrt{|\mathcal{A}|}\mathcal{B}$ into \eqref{short}, we obtain
\be
\begin{split}
&\mathcal{A}-|\mathcal{A}|\mathcal{B}^2  +\mathcal{O}(|\mathcal{A}|^2)=0,
\end{split}
\ee
or
\be
\begin{split}\label{B}
&\mathcal{A}(1-\mathcal{B}^2+\mathcal{O}(|\mathcal{A}|))=0\,\,\hbox{if}\,\, \mathcal{A}\geq0,\\
&\mathcal{A}(1+\mathcal{B}^2+\mathcal{O}(|\mathcal{A}|))=0\,\,\hbox{if}\,\, \mathcal{A}\leq0.
\end{split}
\ee
We need to solve \eqref{B} in terms of $\mathcal{A}$. The second equation in \eqref{B} has no solutions. By the Implicit Function Theorem, there exists  an open neighborhood $U\subset\mathbb{R}$ of $0$ and a unique function $\mathcal{B}: U\to\R$ that solves the first equation of \eqref{B} for $\mathcal{A}\in U$. Therefore, we have the restriction $\mathcal{A}\geq0$. Hence, using formula \eqref{2/3} and the restriction on $\mathcal{A}$, we conclude that $(b-\frac{(4k^2-1)^2}{4k^2})$ must be greater or equal to $0$ (note that $b$ must be greater or equal to $0$ as well). Next, we introduce a scaling parameter $\omega$ defined by the equation
\be
\begin{split}\label{w}
\frac{4k^2-1}{2k}=4\sqrt{b}\omega.
\end{split}
\ee
We can solve \eqref{w} for $k$, i.e.
\be
\begin{split}\label{k1}
k=\frac{2\sqrt{b}\omega\pm\sqrt{4b\omega^2+1}}{2}.
\end{split}
\ee
Then, 
\be
\begin{split}
\mathcal{A}\geq0\,\,\hbox{if and only if}\,\, k=\frac{2\sqrt{b}\omega+\sqrt{b\omega^2+1}}{2}\,\,\hbox{and}\,\,\omega\in[-\frac{1}{4},\frac{1}{4}].
\end{split}
\ee
Note that when $\omega=\pm\frac{1}{4}$, $\mathcal{A}=0$.
For convenience, we introduce $\eps$ defined by $\eps=\sqrt{b}$. Then,
\be
\begin{split}\label{k} k=\frac{2\eps\omega+\sqrt{4\eps^2\omega^2+1}}{2}\,\,\hbox{and}\,\,\omega\in[-\frac{1}{4},\frac{1}{4}].
\end{split}
\ee
Next, using the first equation in \eqref{B}, we arrive at the asymptotic formula for $\mathcal{B}$:
\be
\begin{split} 
\mathcal{B}=1+\mathcal{O}(\mathcal{A}),
\end{split}
\ee
which implies that
\be\label{alphatoo}
\a=\sqrt{|\mathcal{A}|}\mathcal{B}=\sqrt{\mathcal{A}}+\mathcal{O}(\mathcal{A}^{3/2}).
\ee
Since, $k$ and $b$ are functions of $\eps$. $\mathcal{A}$ is a function of $\eps$ as well. In particular,
\be
\begin{split} \label{Acal}
	\mathcal{A}=\frac{2}{3}(1-16\omega^2)\eps^2+\frac{20}{27}\omega(16\omega^2-1)\eps^3+\mathcal{O}(\eps^4).
\end{split}
\ee
Therefore, using \eqref{alphatoo}, we arrive at the asymptotic formula for $\a$:
\be
\begin{split} \label{alpha}
	\a=\sqrt{\frac{2}{3}(1-16\omega^2)}\eps-\frac{5}{9}\omega\sqrt{\frac{2}{3}(1-16\omega^2)}\eps^2+\mathcal{O}(\eps^3).
\end{split}
\ee
Note that when $\omega=\pm\frac{1}{4}$, $\a=0$.
By direct computation, we obtain the expansions:
\be\label{expansions}
\begin{split}
	2+f_1=2-\frac{4\omega}{3}\eps+\mathcal{O}(\eps^2) ,\,\,\,
	-1-2f_1=-1+\frac{8\omega}{3}\eps+\mathcal{O}(\eps^2),\\
	\tilde a=\frac{64\omega}{9}\eps+\mathcal{O}(\eps^2) ,\,\,\,
	\tilde b=-\frac{20\omega}{9}\eps+\mathcal{O}(\eps^2),\,\,f_2=16\omega\eps+\mathcal{O}(\eps^2) ,\,\,\,
\end{split}
\ee
Using formulas \eqref{V} and \eqref{expansions}, we 
obtain the result of Theorem \ref{existence}.


\section{Stability of periodic solutions }\label{s:stability}
In this section we study stability of the bifurcating periodic solutions established in Section \ref{s:existence}, carrying out the proof of Theorem \ref{stabthm}.
Linearizing \eqref{Brusselator model} about $\tilde u_{\eps,\omega}$, we have
\be \label{linearization about tilde u}
\hat B_{\eps,\omega}(\partial_\xi)v:=k^2D\partial^2_\xi v + df(\tilde u_{\eps,\omega})v,
\ee
where
\be\label{df}
\begin{split}
df(\tilde u_{\eps,\omega})
& = \bp \eps^2+3 & 4 \\ -(\eps^2+4) & -4 \ep +   \bp (\eps^2+4)\tilde u_1+ 4\tilde u_2+2\tilde u_1 \tilde u_2  & 4\tilde u_1+\tilde u_1^2 \\ -(\eps^2+4)\tilde u_1- 4\tilde u_2-2\tilde u_1 \tilde u_2  & -4\tilde u_1-\tilde u_1^2 \ep \\
& = A + \bp (\eps^2+4)\tilde u_1+ 4\tilde u_2+2\tilde u_1 \tilde u_2  & 4\tilde u_1+\tilde u_1^2 \\ -(\eps^2+4)\tilde u_1- 4\tilde u_2-2\tilde u_1 \tilde u_2  & -4\tilde u_1-\tilde u_1^2 \ep \\
&=\bp 3 & 4 \\ -4 & -4 \ep+4\sqrt{\frac{2}{3}(1-16\omega^2)}\cos \xi\bp 1 & 2 \\ -1 & -2 \ep\eps+\{\bp 1& 0 \\ -1 & 0 \ep \\
&-\frac{4}{3}\sqrt{\frac{2}{3}(1-16\omega^2)}\bp -4& 4 \\ 4 & -4 \ep\cos \xi-\frac{20}{9}\omega\sqrt{\frac{2}{3}(1-16\omega^2)}\cos \xi\bp 1 & 2 \\ -1 & -2 \ep\\
&+\frac{2}{3}(1-16\omega^2)\bp -4& 4 \\ 4 & -4 \ep\cos^2 \xi\}\eps^2+\mathcal O(\eps^3).
\end{split}
\ee
Since $df(\tilde u_{\eps,\omega})$ is $2\pi$-periodic, every coefficient of the linear operator $\hat B_{\eps,\omega}$ is $2\pi$-periodic. By substituting $v(\xi)= e^{i \sigma \xi} V(\xi)$ we define the Bloch operator family: for $\sigma \in \RR$,
\be\label{Bloch}
B(\eps,\omega, \sigma)V= k^2(\eps,\omega)D(\partial_\xi + i\sigma)^2V + df(\tilde u_{\eps,\omega})V,
\ee
where $B(\eps,\omega,\sigma) : H^2_{per}[0,2\pi] \longrightarrow  L^2_{per} [0,2\pi]$ and $k=\frac{2\eps\omega+\sqrt{4\eps^2\omega^2+1}}{2}$. However, in order to study the spectral stability of $\tilde u_{\eps,\omega}$, it is enough to consider $\sigma \in [-\frac{1}{2},\frac{1}{2})$ because for any $\sigma \in \RR$, $\sigma = \sigma^*+m$, where $\sigma^* \in  [-\frac{1}{2},\frac{1}{2})$ and $m \in \ZZ$; hence we consider $e^{im\xi}V(\xi)$ instead of $V(\xi)$. We now  define the operator $B_0$:
\be
B_0(\sigma):=B(0,\omega,\sigma)=\frac{1}{4}D(\partial_\xi + i\sigma)^2 + \bp 3 & 4 \\ -4 & -4 \ep,
\ee
which has constant coefficients. Here, we consider Bloch operators $B(\eps,\omega,\sigma)$ as small perturbations of $B_0(\sigma)$. So we first study the eigenvalue problem of $B_0(\sigma)$:
\be
B_0(\sigma)\bp a_m \\ b_m \ep e^{im\xi} = \mu_m \bp a_m \\ b_m \ep e^{im\xi},
\notag
\ee
which is equivalent to the eigenvalue problem of the matrix $\mathcal{B}$:
\be \label{B0}
\mathcal{B} (\sigma,m) = \bp -(m+\sigma)^2+3 & 4 \\-4 & -4(m+\sigma)^2-4 \ep.
\ee
Since trace of $\mathcal{B}$ is negative, at least one of the eigenvalues of $\mathcal{B}$ has the negative real part. So we need to consider $\sigma \in [-\frac{1}{2},\frac{1}{2})$ such that the determinant of $\mathcal{B}(\sigma,m) = 0$. Notice that
\be
det \mathcal{B} =4 (m+\sigma)^4-8(m+\sigma)^2+4=4((m+\sigma)^2-1)^2.
\ee
$det \mathcal{B} = 0$ becomes $(m+\sigma)^2 = 1$. Since $m\in \ZZ$, the possible values of $m$ are $1$ and $-1$, and so we consider the following ``dangerous set:'' for some sufficiently small $\eta >0$.
\be
\Gamma= \{ \sigma | -\eta <\sigma < \eta \}.
\ee

Therefore, as long as $\sigma$ is bounded away from $0$, the real part of the spectrum of $B_0(\sigma)$ has negative upper bound. Similarly, one can show that the real part of the spectrum of the constant-coefficient operator $k^2(\eps,\omega)D(\partial_\xi + i\sigma)^2+\bp 3 & 4 \\ -4 & -4 \ep$ has negative upper bound (the bound might depend on $\eps$) if $\sigma$ is bounded away from $0$. Finally, since 
$df(\tilde u_{\eps,\omega})-\bp 3 & 4 \\ -4 & -4 \ep$ represents a bounded small perturbation,  the real part of the spectrum of $B(\eps,\omega,\sigma)$ has negative upper bound. for $\sigma \in [-\frac{1}{2},\frac{1}{2})\setminus \Gamma $.
\bigskip

\subsection{Stability of the bifurcating periodic solutions: coperiodic case $\sigma=0$}

We now consider the eigenvalue problem of $B(\eps,\omega,0)$:
\be
0=\Big[ B(\eps,\omega,0)-\l I \Big] W.
\ee
In order to use the Lyapunov-Schmidt reduction, we decompose $W = \b_1 U_{1}+\b_{2} U_{2} + \mathcal V$ and we first solve
\be\label{lambda}
\begin{split}
	0
	& =(I-Q)\Big[ B(\eps,\omega,0)-\l I \Big] (\b_1 U_{1}+\b_{2} U_{2} + \mathcal V),
\end{split}
\ee

where
\be
(I-Q)\Big[ B(\eps,\omega,0)-\l I \Big] (\b_1 U_{1}+\b_{2} U_{2} + \mathcal V):\mathbb{R}\times\mathbb{C}\times\mathbb{R}^2\times\ran(I-P)\to\ran(I-Q).
\ee

By the Implicit Function Theorem, there exists  an open neighborhood $U\subset\mathbb{R}\times\mathbb{C}\times\mathbb{R}^2$ of $(0,0,0,0)$ and a unique function $\mathcal V: U\to(I-P)H^2_{per}([0,2\pi], \RR^2)$ that solves  \eqref{lambda} for $(\eps,\lambda,\b_1,\b_2)\in U$.

Next, it is clear that the relation between $\b$ and $\mathcal V$ is linear. Then, let $\mathcal V(\eps,\omega,\lambda,\b)=\mathcal V_1(\eps,\omega,\lambda)\b_1+\mathcal V_2(\eps,\omega,\lambda)\b_2$. Now let us find asymptotic expansions of $\mathcal V_1$ and $\mathcal V_2$ with respect to parameter $\eps$.\\
1.) First, we compute $\mathcal V_i(0,\omega,\lambda)=\partial_{\b_i}\mathcal V|_{\eps=0}$. We differentiate \eqref{lambda} with respect $\b_i$ and plug in $0$ for $\eps$.

\be
\begin{split}
	0
	& =(I-Q)\Big[ B(0,\omega,0)-\l I \Big] (U_{1} + \partial_{\b_i}\mathcal V|_{\eps=0}).
\end{split}
\ee
Notice that $ B(0,\omega,0)  U_{1}=L_{per}U_{1}=0$ and $(I-Q)U_1=0$. Since $(I-Q)\Big[ B(0,\omega,0)-\l I \Big](I-P)$ is invertible for small values of $\lambda$, we conclude that
\be
\mathcal V_i(0,\omega,\lambda)=\partial_{\b_i}\mathcal V|_{\eps=0}=0.
\ee
2.) Now, we differentiate the second equation of \eqref{lambda} with respect to $\b_1$ and $\eps$ and, then, plug in $0$ for $\eps$.
Note that it follows from \eqref{k} that
\be\label{kk}
k=\frac{1}{2}+\omega\eps+\mathcal O(\eps^2),\,\,\,k^2=\frac{1}{4}+\omega\eps+2\omega^2\eps^2+\mathcal O(\eps^3).
\ee
Therefore,
\be
\begin{split}
	0
	& =(I-Q)\partial_{\eps}B(0,\omega,0)U_{1} + (I-Q)(B(0,\omega,0)-\lambda)\partial_{\eps}\partial_{\b_1}\mathcal V|_{\eps=0},
\end{split}
\ee
or
\be
\begin{split}
	(I-Q)(B(0,\omega,0)-\lambda)\partial_{\eps}\partial_{\b_1}\mathcal V|_{\eps=0}
	& =-(I-Q)\partial_{\eps}B(0,\omega,0)U_{1}.
\end{split}
\ee
Taking into account formulas \eqref{U1}, \eqref{df}, \eqref{Bloch} and \eqref{kk}, we arrive at
\be\label{ebetta}
\begin{split}
	&-(I-Q)\partial_{\eps}B(0,\omega,0)U_{1}=-(I-Q)[\omega D\partial^2_\xi + 4\sqrt{\frac{2}{3}(1-16\omega^2)}\bp 1 & 2 \\ -1 & -2 \ep\cos \xi]U_1\\
	&=-\omega(I-Q) D\partial^2_\xi U_1=8\omega\bp 1 \\ -2 \ep\cos \xi.
\end{split}
\ee
Since $\mathbb{R}^2\cos \xi$ is an invariant subspace for the invertible operator $(I-Q)(B(0,\omega,0)-\lambda)(I-P)$, $\partial_{\eps}\partial_{\b_1}\mathcal V|_{\eps=0}$ is of the form $\bp a \\ b \ep \cos \xi$. Also, since $\partial_{\eps}\partial_{\b_1}\mathcal V|_{\eps=0}\in\ran(I-P)$, $\bp a \\ b \ep$ should be orthogonal to the vector $\bp 2 \\ 1 \ep$, which means that $\partial_{\eps}\partial_{\b_1}\mathcal V|_{\eps=0}=h_1\bp 1 \\ -2 \ep \cos \xi$. \\
Next, note that
\be
\begin{split}\label{h1}
&(I-Q)(B(0,\omega,0)-\lambda)\bp 1 \\ -2 \ep h_1 \cos \xi=h_1(I-Q)\big(\frac{1}{4}D\partial_\xi^2 + \bp 3 & 4 \\ -4 & -4 \ep-\lambda\big)\bp 1 \\ -2 \ep \cos \xi\\
&=(-6-\lambda)h_1\bp 1 \\ -2 \ep  \cos \xi
\end{split}
\ee

Using \eqref{ebetta}-\eqref{h1}, we derive that
$
(-6-\lambda)h_1=8\omega.
$
Hence,
$
\partial_{\eps}\partial_{\b_1}\mathcal V|_{\eps=0}=\frac{8\omega}{-6-\lambda}\bp 1 \\ -2 \ep  \cos \xi.
$
Similarly,
$
\partial_{\eps}\partial_{\b_2}\mathcal V|_{\eps=0}=\frac{8\omega}{-6-\lambda}\bp 1 \\ -2 \ep  \sin \xi.
$
So far, we have shown that 
$$
\mathcal V(\eps,\omega,\lambda,\b)=\mathcal (\frac{8\omega}{-6-\lambda}\cos \xi\bp 1 \\ -2 \ep  \eps+\mathcal O(\eps^2))\b_1+\mathcal (\frac{8\omega}{-6-\lambda}\sin \xi\bp 1 \\ -2 \ep  \eps+\mathcal O(\eps^2))\b_2.
$$
3.) Now, we would like to compute $\partial^2_{\eps}\partial_{\b_i}\mathcal V|_{\eps=0}$. Differentiating the second equation of \eqref{lambda} with respect to $\b_1$ and $\eps$ twice and, then, plugging in $0$ for $\eps$, we obtain
$$
	0
	 =(I-Q)\partial^2_{\eps}B(0,\omega,0)U_{1} + 2(I-Q)\partial_{\eps}B(0,\omega,0)\partial_{\eps}\partial_{\b_1}\mathcal V|_{\eps=0}+(I-Q)(B(0,\omega,0)-\lambda)\partial^2_{\eps}\partial_{\b_1}\mathcal V|_{\eps=0},
$$
or
$$
	(I-Q)(B(0,\omega,0)-\lambda)\partial^2_{\eps}\partial_{\b_1}\mathcal V|_{\eps=0}
	 =-(I-Q)\partial^2_{\eps}B(0,\omega,0)U_{1} - 2(I-Q)\partial_{\eps}B(0,\omega,0)\partial_{\eps}\partial_{\b_1}\mathcal V|_{\eps=0}.
$$
Therefore,
\be
\begin{split}
	&(I-Q)(B(0,\omega,0)-\lambda)\partial^2_{\eps}\partial_{\b_1}\mathcal V|_{\eps=0}
	 =-(I-Q)\partial^2_{\eps}B(0,\omega,0)U_{1} - 2(I-Q)\partial_{\eps}B(0,\omega,0)\partial_{\eps}\partial_{\b_1}\mathcal V|_{\eps=0}\\
	&=-2(I-Q)\big\{(2\omega^2D\partial_\xi^2+\bp 1& 0 \\ -1 & 0 \ep-\frac{4}{3}\sqrt{\frac{2}{3}(1-16\omega^2)}\bp -4& 4 \\ 4 & -4 \ep\cos \xi\\
	&-\frac{20}{9}\omega\sqrt{\frac{2}{3}(1-16\omega^2)}\cos \xi\bp 1 & 2 \\ -1 & -2 \ep+\frac{2}{3}(1-16\omega^2)\bp -4& 4 \\ 4 & -4 \ep\cos^2 \xi)U_1\\
	&+(\omega D\partial^2_\xi + 4\sqrt{\frac{2}{3}(1-16\omega^2)}\bp 1 & 2 \\ -1 & -2 \ep\cos \xi)\frac{8\omega}{-6-\lambda}\bp 1 \\ -2 \ep  \cos \xi\big\}\\
	&=-2(I-Q)\big\{(-16\omega^2\bp 1 \\ -2 \ep  \cos \xi+2\bp 1 \\ -1 \ep  \cos \xi+16\sqrt{\frac{2}{3}(1-16\omega^2)}\bp 1 \\ -1 \ep\frac{1}{2}(1+\cos 2\xi)\\
	&+\frac{-24}{3}(1-16\omega^2)\bp 1 \\ -1 \ep\frac{1}{4}(\cos 3\xi+3\cos \xi)\\
		&+(-4\omega \frac{8\omega}{-6-\lambda}\bp 1 \\ -8 \ep  \cos \xi + 4\sqrt{\frac{2}{3}(1-16\omega^2)}\frac{-24\omega}{-6-\lambda}\bp 1 \\ -1 \ep  \frac{1}{2}(1+\cos 2\xi)\big\}.
\end{split}
\ee		
 Using \eqref{1,-8} and \eqref{qcos}, we arrive at		
	\be
	\begin{split}	
	(I-Q)(B(0,\omega,0)-\lambda)\partial^2_{\eps}\partial_{\b_1}\mathcal V|_{\eps=0}	&=(32\omega^2-\frac{4}{3}+4(1-16\omega^2)+ \frac{320\omega^2}{-6-\lambda})\bp 1 \\ -2 \ep  \cos \xi\\
	&+\bp * \\ * \ep +\bp * \\ * \ep \cos 2\xi+\bp * \\ * \ep \cos 3\xi.
\end{split}
\ee

Since $\mathbb{R}^2$, $\mathbb{R}^2\cos \xi$, $\mathbb{R}^2\cos 2\xi$ and $\mathbb{R}^2\cos 3\xi$ are invariant subspaces for the invertible operator $(I-Q)(B(0,\omega,0)-\lambda)(I-P)$, $\partial^2_{\eps}\partial_{\b_1}\mathcal V|_{\eps=0}$ is of the form
	\be
	\begin{split}	
	\partial^2_{\eps}\partial_{\b_1}\mathcal V|_{\eps=0}	&=h_2\bp 1 \\ -2 \ep  \cos \xi+\bp * \\ * \ep +\bp * \\ * \ep \cos 2\xi+\bp * \\ * \ep \cos 3\xi.
\end{split}
\ee
Similarly,
	\be
	\begin{split}	
	\partial^2_{\eps}\partial_{\b_2}\mathcal V|_{\eps=0}	&=\tilde h_2\bp 1 \\ -2 \ep  \sin \xi+\bp * \\ * \ep +\bp * \\ * \ep \sin 2\xi+\bp * \\ * \ep \sin 3\xi.
\end{split}
\ee
	\be\label{Ve2}
	\begin{split}
\mathcal V(\eps,\omega,\lambda,\b)&=\mathcal (\frac{8\omega}{-6-\lambda}\cos \xi\bp 1 \\ -2 \ep  \eps+\frac{1}{2}\partial^2_{\eps}\partial_{\b_1}\mathcal V|_{\eps=0}\eps^2+\mathcal O(\eps^3))\b_1\\
&+\mathcal (\frac{8\omega}{-6-\lambda}\sin \xi\bp 1 \\ -2 \ep  \eps+\frac{1}{2}\partial^2_{\eps}\partial_{\b_2}\mathcal V|_{\eps=0}\eps^2+\mathcal O(\eps^3))\b_2.
\end{split}
\ee

Note that 
\be\label{Ve^2}
\begin{split}	
	&\mathbb{R}^2, \mathbb{R}^2\cos 2\xi, \mathbb{R}^2\cos 3\xi\in\ker \tilde Q, \\
	&\tilde Q \big(\frac{1}{4}D\partial_\xi^2 + \bp 3 & 4 \\ -4 & -4 \ep\big)\bp 1 \\ -2 \ep \cos \xi=-6\tilde Q\bp 1 \\ -2 \ep \cos \xi=0.
\end{split}
\ee

In order to obtain the reduced equation for the spectral problem, we substitute $W = \b_1 U_{1}+\b_{2} U_{2} + \mathcal V$, where $\mathcal V$ is given by \eqref{Ve2} into the equation 

\be\label{sred}
\begin{split}
	0
	& =\tilde Q\Big[ B(\eps,\omega,0)-\l I \Big] W.
\end{split}
\ee
Using \eqref{QU1}, \eqref{1,-8}, \eqref{qcos}, \eqref{fs}, \eqref{df}, \eqref{kk} and \eqref{Ve^2}, we arrive at
\be
\begin{split}
	0
	& =\tilde Q\Big[ B(\eps,\omega,0)-\l I \Big](\b_1 U_{1}+\b_{2} U_{2})+\tilde Q\Big[ B(\eps,\omega,0)-\l I \Big]\mathcal V\\
	&=\bp \frac{2}{3}\eps^2-2(1-16\omega^2)\eps^2-\lambda & 0 \\ 0 & \frac{2}{3}\eps^2-\frac{2}{3}(1-16\omega^2)\eps^2-\lambda \ep\bp \b_1 \\ \b_2 \ep\\
	&+\bp 8\omega\eps\frac{8\omega\eps}{-6-\lambda} & 0 \\ 0 & 8\omega\eps\frac{8\omega\eps}{-6-\lambda} \ep\bp \b_1 \\ \b_2 \ep+
	\bp \mathcal O(\eps^3) & \mathcal O(\eps^3) \\ \mathcal O(\eps^3) & \mathcal O(\eps^3) \ep\bp \b_1 \\ \b_2 \ep,
\end{split}
\ee
or
\be\label{spmatrix}
\begin{split}
	0
	&=\bp -\frac{4}{3}(1-16\omega^2)\eps^2-\lambda & 0 \\ 0 & -\lambda \ep\bp \b_1 \\ \b_2 \ep\\
	&+
	\bp \mathcal O(\eps^2(\eps+|\lambda|)) & \mathcal O(\eps^3) \\ \mathcal O(\eps^3) & \mathcal O(\eps^2(\eps+|\lambda|)) \ep\bp \b_1 \\ \b_2 \ep.
\end{split}
\ee
Now, we will establish the following refined remainder estimate.
\begin{lemma}\label{remainder}
The remainder in \eqref{spmatrix} has the form
\be
\begin{split}
	\bp \mathcal O(\eps^2(\eps+|\lambda|)) & \mathcal O(\eps^3) \\ \mathcal O(\eps^3) & \mathcal O(\eps^2(\eps+|\lambda|)) \ep=\bp \mathcal O(\eps^2(\eps+|\lambda|)) & \mathcal O(\eps^3|\lambda|) \\  \mathcal O(\eps^3|\lambda|)& \mathcal O(\eps^2|\lambda|) \ep.
\end{split}
\ee
\end{lemma}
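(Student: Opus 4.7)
The plan is to exploit two structural features of the problem that have not been used explicitly yet: translation invariance of the Brusselator and reflection symmetry of the profile $\tilde u_{\eps,\omega}$. Differentiating the profile equation $N(\eps^2,k,\tilde u_{\eps,\omega})=0$ in $\xi$ gives $B(\eps,\omega,0)\,\partial_\xi \tilde u_{\eps,\omega}=0$, so $\lambda=0$ is always in $\Sp(B(\eps,\omega,0))$. From Theorem \ref{existence},
\[
\partial_\xi \tilde u_{\eps,\omega} = -\eps\sqrt{\tfrac{2}{3}(1-16\omega^2)}\sin\xi\bp 2 \\ -1 \ep + \mathcal{O}(\eps^2),
\]
a nonzero multiple of $U_2$ to leading order and odd in $\xi$.

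Since $\tilde u_{\eps,\omega}$ is even in $\xi$ (Theorem \ref{existence}), $B(\eps,\omega,0)$ commutes with the reflection $R\colon \xi\mapsto -\xi$. The basis $\{U_1,U_2\}$ of $\ker L_{per}$ and the dual basis $\{\bar U_1,\bar U_2\}$ split cleanly into even $(U_1,\bar U_1)$ and odd $(U_2,\bar U_2)$ parts, so the projections $P$, $Q$, $I-P$, $I-Q$ all preserve parity. The right-hand side of the defining equation for $\mathcal V_i$ therefore has the same parity as $U_i$, and the invertible operator $(I-Q)(B(\eps,\omega,0)-\lambda)(I-P)$ preserves parity; hence $\mathcal V_1$ is even and $\mathcal V_2$ is odd. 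Applying $\tilde Q$ to $(B(\eps,\omega,0)-\lambda)(\b_1 U_1 + \b_2 U_2 + \mathcal V)$ then yields a block-diagonal reduced matrix: $M_{12}\equiv M_{21}\equiv 0$, which is in particular $\mathcal{O}(\eps^3|\lambda|)$.

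For the $(2,2)$ entry I combine the two ingredients. Writing the translation mode in reduced coordinates via the implicit relation $W=\b_1 U_1+\b_2 U_2+\mathcal V(\eps,\omega,0,\b)$, the leading-order formula above identifies $\b=(0,\b_2^*)$ with $\b_2^*\neq 0$ (since $\mathcal V$ vanishes linearly in $\b$ at $\b=0$, and the $U_2$ component of the translation mode is nonzero of order $\eps$). Substituting into the reduced equation $M(\eps,\omega,0)\b=0$ forces $M_{22}(\eps,\omega,0)=0$ identically in $(\eps,\omega)$. Smooth dependence of the Lyapunov--Schmidt inverse on $\lambda$ (the perturbed operator remains invertible for small $\lambda$) then lets me factor $M_{22}(\eps,\omega,\lambda)=\lambda\,g(\eps,\omega,\lambda)$ with $g$ smooth, and comparison with the already established bound $M_{22}=-\lambda+\mathcal{O}(\eps^2(\eps+|\lambda|))$ gives $g=-1+\mathcal{O}(\eps^2)$. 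Hence $M_{22}=-\lambda+\mathcal{O}(\eps^2|\lambda|)$, as claimed.

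The main obstacle I anticipate is bookkeeping: verifying that the order-by-order expansion of $\mathcal V$ from Section \ref{s:existence} decomposes cleanly under reflection parity at every step, and inverting the implicit map $\b\mapsto W$ on the translation mode to confirm $\b=(0,\b_2^*)$ with $\b_2^*\neq 0$. Both are straightforward algebraic consequences of the even/odd splitting of $\{U_i,\bar U_i\}$ and the leading-order formula from Theorem \ref{existence}, so I do not expect any serious analytic difficulty beyond careful tracking of the $O(2)$ symmetries already built into the problem.
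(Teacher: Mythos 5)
Your argument is correct in substance but reaches the refined remainder by a genuinely different mechanism than the paper. The paper's proof sets $\lambda=0$ and invokes the uniqueness clause of the Implicit Function Theorem to identify $\partial_{\b_i}\mathcal V(\eps,\omega,0,\b)$ with $\partial_{\a_i}V$ from the existence analysis; the reduced spectral matrix at $\lambda=0$ is then literally the Jacobian of the $O(2)$-equivariant bifurcation function $f(b,k,|\a|^2)(\a_1,\a_2)^\top$ at the bifurcating solution, namely $\diag\big(\mathcal{A}-3\a^2+\mathcal O(\a^4),\ \mathcal{A}-\a^2+\mathcal O(\a^4)\big)$, whose $(2,2)$ entry is exactly the left-hand side of the existence equation \eqref{short} and hence vanishes. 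You instead obtain the off-diagonal vanishing from reflection parity (exactly, and for all $\lambda$ --- stronger than the stated $\mathcal O(\eps^3|\lambda|)$) and the $(2,2)$ vanishing at $\lambda=0$ from the translation mode $\partial_\xi\tilde u_{\eps,\omega}$, whose reduced coordinates are $(0,\b_2^*)$ with $\b_2^*=-\a(1+\mathcal O(\a))$. Both routes are at bottom expressions of the same translation/reflection symmetry, but yours bypasses the explicit bifurcation function, while the paper's version reproduces as a byproduct the value $-\frac43(1-16\omega^2)\eps^2+\mathcal O(\eps^3)$ of the $(1,1)$ entry at $\lambda=0$ (already available from \eqref{spmatrix}, so nothing is lost). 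Two points to tidy up: (i) at the endpoints $\omega=\pm\frac14$ the profile is identically zero, so $\partial_\xi\tilde u_{\eps,\omega}\equiv0$, $\b_2^*=0$, and your translation-mode step degenerates; since the reduced matrix entries depend continuously on $\omega$, the identity $M_{22}(\eps,\omega,0)=0$ extends to the closed interval by continuity, but this should be said because the lemma is used for all $\omega\in[-\frac14,\frac14]$. (ii) Passing from ``$R_{22}=\mathcal O(\eps^2(\eps+|\lambda|))$ and $R_{22}(\eps,0)=0$'' to ``$R_{22}=\mathcal O(\eps^2|\lambda|)$'' requires a bound on $\partial_\lambda R_{22}$, not merely the pointwise estimate; this follows from analyticity of $\mathcal V$ in $\lambda$ and is treated with the same brevity in the paper, so it is not a defect relative to the paper's standard.
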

\begin{proof}
	All we need to show is that if $\lambda=0$, then the reduced spectral equation is of the form
	\be
	\begin{split}
		0=\bp -\frac{4}{3}(1-16\omega^2)\eps^2+\mathcal O(\eps^3) & 0 \\ 0 & 0 \ep\bp \b_1 \\ \b_2 \ep.
	\end{split}
	\ee
	Now, we plug $0$ for $\lambda$ in \eqref{lambda} and then differentiate it with respect to $\b_1$.
	
	\be
	\begin{split}
		0
		& =(I-Q) B(\eps,\omega,0)( U_{1} + \partial_{\b_1}\mathcal V).
	\end{split}
	\ee
	Let us also  differentiate the second equation of \eqref{projection equations} with respect to $\alpha_1$ and then plug in $0$ for $\a_2$.
	\be
	\begin{split}
		& (I-Q)\big[(k^2D\partial_\xi^2+ A)\\&+  \bp (b+4) u_1+ 4 u_2+2 u_1  u_2  & 4 u_1+ u_1^2 \\ -(b+4) u_1- 4 u_2-2 u_1  u_2  & -4 u_1- u_1^2 \ep\big] (U_1+\partial_{\alpha_1}V)=0,
	\end{split}
	\ee
	Due to the uniqueness part in the Implicit Function Theorem, we conclude that 
	 \be
	 	\begin{split}
	 		\partial_{\b_1}\mathcal V(\eps,\omega,0,\b)=\partial_{\alpha_1}V|_{\alpha_2=0}&=f_1\cos \xi\bp 1 \\ -2 \ep +(-\frac{1}{4}f_2\bp 0 \\ 1  \ep+\bp \tilde a \\ \tilde b \ep\cos 2\xi)\alpha_1\\
	 		 &+\frac{1}{2}(\frac{\frac{1}{3}f_3}{20k^2 - \frac{b-3}{3}}\bp 1 \\ -2 \ep \cos \xi+\bp * \\ * \ep\cos 3\xi)\alpha_1^2+\mathcal{O}(|\a_1|^3).
	 	\end{split}
	 	\ee
	 	Similarly, we conclude that 
	 	\be
	 		 	\begin{split}
	 		 		\partial_{\b_2}\mathcal V(\eps,\omega,0,\b)=\partial_{\alpha_2}V|_{\alpha_2=0}=f_1\sin \xi\bp 1 \\ -2 \ep.
	 		 	\end{split}
	 		 	\ee
	 		 	Therefore, in order to find the entries of the spectral matrix from the reduced equation \eqref{sred} we differentiate the first and second equations of \eqref{reduced} with respect to $\a_1$ and $\a_2$ and then plug $0$ for $\a_2$.
	 		 	  Hence,
	 		\be
	 		\begin{split}
	 		\tilde Q B(\eps,\omega,0) W=\bp \mathcal{A}-3\a^2  +\mathcal{O}(|\a|^4) & 0 \\ 0 & \mathcal{A}-\a^2  +\mathcal{O}(|\a|^4) \ep\bp \b_1 \\ \b_2 \ep=0\\
	 		\end{split},
	 		\ee 	  
	 		where $\mathcal{A}-\a^2  +\mathcal{O}(|\a|^4)$ is exactly the left-hand side of \eqref{short}. Using formulas \eqref{Acal} and \eqref{alpha}, we arrive at
	 		\be
	 		\begin{split}
	 			\tilde Q B(\eps,\omega,0) W=\bp -\frac{4}{3}(1-16\omega^2)\eps^2+\mathcal O(\eps^3) & 0 \\ 0 & 0 \ep\bp \b_1 \\ \b_2 \ep=0\\
	 		\end{split}.
	 		\ee 	
\end{proof}

Using the refined remainder estimate, we obtain the following characterization of co-periodic stability.

\begin{proposition}[Co-periodic stability]\label{coperprop}
	Let $u_{\eps,\omega}$ be the solution from Theorem \ref{existence}.  There exist
	 $\tilde\eps_0\in(0,\eps_0]$, where $\eps_0$ is taken from Theorem \ref{existence}, and $\delta>0$ such that for all $\eps\in[0,\tilde\eps_0)$ and all $\omega\in[-\frac{1}{4},\frac{1}{4}]$ the spectrum of $B(\eps,\omega,0)$ has the decomposition:
	\be
	\begin{split}
		\Sp(B(\eps,\omega,0))=S\cup\{\lambda_1,\lambda_2\},
	\end{split}
	\ee
	where
	\be
	\begin{split}
		\lambda_1(\eps,\omega)&=-\frac{4}{3}(1-16\omega^2)\eps^2+\mathcal O(\eps^3),\\
		\lambda_2(\eps,\omega)&=0.
	\end{split}
	\ee
Moreover, if $\lambda\in S$, then $\Re\lambda<-\delta$.
\end{proposition}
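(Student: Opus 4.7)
The plan is to combine a constant-coefficient Bloch analysis bounding the spectrum of $B_0(0)$ away from the origin (except at the two-fold zero eigenvalue at $m=\pm 1$) with the Lyapunov--Schmidt reduction at $\sigma=0$ already developed above, refined by Lemma \ref{remainder}, which yields a $2\times 2$ reduced characteristic equation for the two small eigenvalues $\lambda_1,\lambda_2$.

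First I would identify the set $S$. At $\eps=0$, the constant-coefficient operator $B_0(0)$ has Bloch spectrum given by the eigenvalues of $\mathcal B(0,m)$ in \eqref{B0} as $m$ ranges over $\ZZ$; direct inspection shows $\Re\mu_m\le -\delta_0$ for some $\delta_0>0$ and all $m$ except $m=\pm 1$, where $\det\mathcal B(0,\pm 1)=0$ and the matrix has $0$ as a simple eigenvalue. These two zero eigenvalues correspond precisely to the two-dimensional kernel $\ker L_{per}=\mathrm{span}\{U_1,U_2\}$ that is projected off by $P$. Since $df(\tilde u_{\eps,\omega})-A_c=O(\eps)$ in operator norm by \eqref{df} and $k^2=1/4+O(\eps)$ by \eqref{kk}, a standard perturbation argument on the resolvent (using the Fourier/Bloch representation to bound $(B_0(0)-\lambda)^{-1}$ on $\mathrm{ran}(I-P)$) shows that $(I-Q)(B(\eps,\omega,0)-\lambda)(I-P)$ is invertible for $\Re\lambda\ge -\delta$ whenever $\eps$ is sufficiently small, giving the uniform spectral bound $\Re\lambda\le -\delta$ on $S$.

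Next I would compute $\lambda_1,\lambda_2$. From \eqref{spmatrix} together with Lemma \ref{remainder}, the reduced $2\times 2$ characteristic equation takes the form
\[
\det\bp -\tfrac{4}{3}(1-16\omega^2)\eps^2-\lambda+r_{11} & r_{12} \\ r_{21} & -\lambda+r_{22} \ep=0,
\]
with $r_{11}=O(\eps^2(\eps+|\lambda|))$, $r_{22}=O(\eps^2|\lambda|)$ and $r_{12},r_{21}=O(\eps^3|\lambda|)$. Expanding the determinant yields
\[
\lambda\bigl(\lambda+\tfrac{4}{3}(1-16\omega^2)\eps^2+O(\eps^3)+O(\eps^2|\lambda|)\bigr)-O(\eps^6|\lambda|^2)=0,
\]
so that $\lambda_2=0$ is an exact root (as must be the case by translation invariance, since $\partial_\xi\tilde u_{\eps,\omega}\in\ker B(\eps,\omega,0)$), and the remaining factor determines, by the implicit function theorem, the unique small root $\lambda_1=-\tfrac{4}{3}(1-16\omega^2)\eps^2+O(\eps^3)$.

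The main obstacle is the bookkeeping needed to separate the two small eigenvalues from $S$ cleanly. The crucial input is the refined Lemma \ref{remainder}: without the strengthening that the second-row and off-diagonal remainders vanish at $\lambda=0$, one would only conclude $\lambda_2=O(\eps^3)$ rather than the exact identity $\lambda_2\equiv 0$, and one could not factor $\lambda$ out of the determinant. Translation invariance of the profile equation \eqref{B model1} (present because $\tilde u_{\eps,\omega}(\cdot+s)$ is also a solution for every $s$) forces an exact $\lambda=0$ root of the reduced characteristic equation, which is precisely the content of the proof of Lemma \ref{remainder} via the uniqueness clause of the implicit function theorem and allows the argument to close.
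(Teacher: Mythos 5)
Your proposal is correct and follows essentially the same route as the paper: the non-critical spectrum $S$ is controlled by the constant-coefficient Bloch analysis plus a small-perturbation argument, and the two critical eigenvalues are extracted from the reduced $2\times 2$ characteristic equation \eqref{spmatrix} refined by Lemma \ref{remainder}. The only (minor) organizational difference is that you factor out the exact root $\lambda=0$ first, invoking translation invariance, and then apply the implicit function theorem to the remaining linear factor, whereas the paper solves the quadratic directly via a self-consistent expansion of the square root and treats $\omega=\pm\frac14$ as a separate case; both hinge on exactly the same refined remainder estimate.
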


\begin{proof}
	Setting the determinant of the matrix from \eqref{spmatrix} equal to $0$, we obtain
	\be
	 	\begin{split}
	 		 \big(c(\eps,\omega)-\lambda+ \mathcal O(\eps^2|\lambda|)\big)(-\lambda+\mathcal O(\eps^2|\lambda|))+\mathcal O(|\lambda|^2\eps^6)=0,
	 	\end{split}
	\ee
	where $c(\eps,\omega)=-\frac{4}{3}(1-16\omega^2)\eps^2+\mathcal O(\eps^3)$, or
	\be\label{pol}
	\begin{split}
	\lambda^2-\lambda\big(c(\eps,\omega)+\mathcal O(\eps^2|\lambda|)\big)+\mathcal O(\eps^4|\lambda|)+\mathcal O(|\lambda|^2\eps^6)=0.
	\end{split}
	\ee
	
	\be
	\begin{split}
		\lambda_{1,2}&=\frac{c(\eps,\omega)+\mathcal O(\eps^2|\lambda|)\pm\sqrt{(c(\eps,\omega)+\mathcal O(\eps^2|\lambda|))^2+\mathcal O(\eps^4|\lambda|)}}{2}\\
		&=\eps^2\frac{\tilde c(\eps,\omega)+\mathcal O(|\lambda|)\pm\sqrt{(\tilde c(\eps,\omega)+\mathcal O(|\lambda|))^2+\mathcal O(|\lambda|)}}{2},
	\end{split}
	\ee
	where $\tilde c(\eps,\omega)=-\frac{4}{3}(1-16\omega^2)+\mathcal O(\eps)$. 
	Assume that $\omega\neq\pm\frac{1}{4}$, then we can expand \\$\sqrt{(\tilde c(\eps,\omega)+\mathcal O(|\lambda|))^2+\mathcal O(|\lambda|)}$ as $-\tilde c(\eps,\omega)+\mathcal O(|\lambda|)$. Therefore,
		\be
		\begin{split}
			\lambda_{1}&=c(\eps,\omega)+\mathcal O(\eps^2|\lambda|),\\
			\lambda_{2}&=\mathcal O(\eps^2|\lambda|).\\
		\end{split}
		\ee
		Now assume that $\omega=\pm\frac{1}{4}$. Then $c(\eps,\omega)=0$. Instead of \eqref{pol}, we have 
		\be\label{pol1/4}
		\begin{split}
			\lambda^2-\lambda \mathcal O(\eps^2|\lambda|)+\mathcal O(\eps^4|\lambda|^2)=0.
		\end{split}
		\ee
		Then, both $\lambda_{1}$ and $\lambda_{2}$ are of the form $\mathcal O(\eps^2|\lambda|)$.
\end{proof}

\subsection{Stability of the bifurcating periodic solutions: general case}

We now consider the eigenvalue problem of $B(\eps,\omega,\sigma)$:
\be\label{evp}
0=\Big[ B(\eps,\omega,\sigma)-\l I \Big] W.
\ee
In order to use the Lyapunov-Schmidt reduction, we decompose $W = \b_1 U_{1}+\b_{2} U_{2} + \mathcal V$ and we first solve
\be\label{slambda}
\begin{split}
	0
	& =(I-Q)\Big[ B(\eps,\omega,\sigma)-\l I \Big] (\b_1 U_{1}+\b_{2} U_{2} + \mathcal V).
\end{split}
\ee

Next, we go through steps described in the previous section. 
Next, it is clear that the relation between $\b$ and $\mathcal V$ is linear. Then, let $\mathcal V(\eps,\omega,\sigma,\lambda,\b)=\mathcal V_1(\eps,\omega,\sigma,\lambda)\b_1+\mathcal V_2(\eps,\omega,\sigma,\lambda)\b_2$. Now let us find asymptotic expansions of $\mathcal V_1$ and $\mathcal V_2$ with respect to parameter $\eps$.\\
1.) First, we compute $\mathcal V_i(0,\omega,\sigma,\lambda)=\partial_{\b_i}\mathcal V|_{\eps=0}$. \\
a.) Differentiating \eqref{slambda} with respect $\b_1$ and plugging in $0$ for $\eps$, we obtain
$$
	0  =(I-Q)\Big[ B(0,\omega,\sigma)-\l I \Big] (U_{1} + \partial_{\b_1}\mathcal V|_{\eps=0}).
$$
Therefore,
\be\label{eqV}
\begin{split}
	&(I-Q)\Big[ B(0,\omega,\sigma)-\l I \Big]\partial_{\b_1}\mathcal V|_{\eps=0}
	=-(I-Q)\Big[ B(0,\omega,\sigma)-\l I \Big] U_{1}\\
	&=-(I-Q)(L_{per}U_{1}-2i\sigma\bp 2 \\ -4 \ep\sin \xi-\sigma^2\bp 2 \\ -4 \ep\cos \xi)-\lambda(I-Q)U_1\\
	&=4i\sigma\bp 1 \\ -2 \ep\sin \xi+2\sigma^2\bp 1 \\ -2 \ep\cos \xi.
\end{split}
\ee
We conclude that $\mathcal V_1(0,\omega,\sigma,\lambda)$ is of the form
\be\label{Vs}
\mathcal V_1(0,\omega,\sigma,\lambda)=\partial_{\b_1}\mathcal V|_{\eps=0}=h_1\bp 1 \\ -2 \ep\sin \xi+h_2\bp 1 \\ -2 \ep\cos \xi.
\ee
Let us compute $h_i$.  Plugging \eqref{Vs} into the left-hand side of \eqref{eqV}, we obtain
\be
\begin{split}\label{leqV}
	&(I-Q)\Big[ B(0,\omega,\sigma)-\l I \Big]\partial_{\b_i}\mathcal V|_{\eps=0}
	=(I-Q)(L_{per}(h_1\bp 1 \\ -2 \ep\sin \xi+h_2\bp 1 \\ -2 \ep\cos \xi)\\
	&+2i\sigma h_1\bp 1 \\ -8 \ep\cos \xi-2i\sigma h_2\bp 1 \\ -8 \ep\sin \xi-\sigma^2h_1\bp 1 \\ -8 \ep\sin \xi-\sigma^2h_2\bp 1 \\ -8 \ep\cos \xi)\\
	&=(-6-\lambda)h_1\bp 1 \\ -2 \ep\sin \xi+(-6-\lambda)h_2\bp 1 \\ -2 \ep\cos \xi+10i\sigma h_1\bp 1 \\ -2 \ep\cos \xi\\
	&-10i\sigma h_2\bp 1 \\ -2 \ep\sin \xi-5\sigma^2h_1\bp 1 \\ -2 \ep\sin \xi-5\sigma^2h_2\bp 1 \\ -2 \ep\cos \xi.
\end{split}
\ee
Taking into account \eqref{eqV} and \eqref{leqV}, we have a system of linear equations in $h_1$ and $h_2$:
 \be\label{sys}
 \begin{split}
   &-4i\sigma+(-6-\lambda-5\sigma^2)h_1-10i\sigma h_2=0,\\
 	&-2\sigma^2+(-6-\lambda-5\sigma^2)h_2+10i\sigma h_1=0.
 \end{split}
 \ee
Therefore,
 \be\label{h1,2}
 \begin{split}
   &h_1=\frac{4i\sigma(-6-\lambda)}{(-6-\lambda-5\sigma^2)^2-100\sigma^2},\qquad
 	h_2=\frac{2\sigma^2(14-\lambda-5\sigma^2)}{(-6-\lambda-5\sigma^2)^2-100\sigma^2}.
 \end{split}
 \ee
b.) In a similar fashion, we compute $\mathcal V_2(0,\omega,\sigma,\lambda)$. Since
$$
\begin{aligned}
	(I-Q)\Big[ B(0,\omega,\sigma)-\l I \Big]\partial_{\b_2}\mathcal V|_{\eps=0}
	&=-(I-Q)\Big[ B(0,\omega,\sigma)-\l I \Big] U_{2}\\
	& =-4i\sigma\bp 1 \\ -2 \ep\cos \xi+2\sigma^2\bp 1 \\ -2 \ep\sin \xi,
\end{aligned}
$$
we conclude that $\mathcal V_2(0,\omega,\sigma,\lambda)$ is of the form
\be
\mathcal V_2(0,\omega,\sigma,\lambda)=\partial_{\b_2}\mathcal V|_{\eps=0}=\tilde h_1\bp 1 \\ -2 \ep\sin \xi+\tilde h_2\bp 1 \\ -2 \ep\cos \xi.
\ee

Then, similarly as in \eqref{sys}, we have a system of linear equations in $\tilde h_1$ and $\tilde h_2$:
$$
\begin{aligned}
    4i\sigma+(-6-\lambda-5\sigma^2)\tilde h_2+10i\sigma \tilde h_1&=0,\\
 	-2\sigma^2+(-6-\lambda-5\sigma^2)\tilde h_1-10i\sigma \tilde h_2&=0.
\end{aligned}
$$
Therefore,
 \be\label{h1h2}
 \begin{split}
   &\tilde h_1=h_2, \qquad
 	\tilde h_2=-h_1.
 \end{split}
 \ee

Hence,
$$
\begin{aligned}
\mathcal V(\eps,\omega,\sigma,\lambda,\b) &= \Big( h_1\bp 1 \\ -2 \ep  \sin \xi+h_2\bp 1 \\ -2 \ep  \cos \xi+\mathcal O(\eps)\Big)\b_1
\\
&\quad +\Big(h_2\bp 1 \\ -2 \ep  \sin \xi-h_1\bp 1 \\ -2 \ep  \cos \xi +\mathcal O(\eps) \Big)\b_2.
\end{aligned}
$$

2.) Next, we would like to compute $\partial_{\eps}\partial_{\b_i}\mathcal V|_{\eps=0}$.\\
a.) We start with $\partial_{\eps}\partial_{\b_1}\mathcal V|_{\eps=0}$.
Differentiating \eqref{slambda} with respect $\b_1$ and $\eps$, and plugging in $0$ for $\eps$, we obtain
$
	0
	 =(I-Q)\big\{\partial_{\eps}B(0,\omega,\sigma) (U_{1} + \partial_{\b_1}\mathcal V|_{\eps=0})+[ B(0,\omega,\sigma)-\l I]\partial_{\eps}\partial_{\b_1}\mathcal V|_{\eps=0}\big\}.
$

Therefore,
$
	(I-Q)[ B(0,\omega,\sigma)-\l I]\partial_{\eps}\partial_{\b_1}\mathcal V|_{\eps=0}
	 =-(I-Q)\partial_{\eps}B(0,\omega,\sigma) (U_{1} + \partial_{\b_1}\mathcal V|_{\eps=0}),
$
or, using the second line in \eqref{eqV} and the last line in \eqref{leqV},
\be
\begin{split}\label{Vsel}
	&(I-Q)[ B(0,\omega,\sigma)-\l I]\partial_{\eps}\partial_{\b_1}\mathcal V|_{\eps=0}
	=-(I-Q)[\omega D(\partial^2_{\xi}+2i\sigma-\sigma^2)\\
	&+4\sqrt{\frac{2}{3}(1-16\omega^2)}\cos \xi\bp 1 & 2 \\ -1 & -2 \ep] (U_{1} + \partial_{\b_1}\mathcal V|_{\eps=0})=(16i\omega\sigma\bp 1 \\ -2 \ep\sin \xi+8\omega(1+\sigma^2)\bp 1 \\ -2 \ep\cos \xi)\\
	&-(40i\omega\sigma h_1\bp 1 \\ -2 \ep\cos \xi-40i\omega\sigma h_2\bp 1 \\ -2 \ep\sin \xi-20\omega(1+\sigma^2)h_1\bp 1 \\ -2 \ep\sin \xi\\
	&-20\omega(1+\sigma^2)h_2\bp 1 \\ -2 \ep\cos \xi-12\sqrt{\frac{2}{3}(1-16\omega^2)}h_2\bp 1 \\ -1 \ep\cos^2 \xi\\
	&-12\sqrt{\frac{2}{3}(1-16\omega^2)}h_1\bp 1 \\ -1 \ep\cos \xi\sin \xi).
\end{split}
\ee
Hence, $\partial_{\eps}\partial_{\b_1}\mathcal V|_{\eps=0}$ is of the form
\be
 \begin{split}
&\partial_{\eps}\partial_{\b_1}\mathcal V|_{\eps=0}= g_1\bp 1 \\ -2 \ep  \sin \xi+g_2\bp 1 \\ -2 \ep  \cos \xi+G_3 +G_4\sin 2\xi+G_5\cos 2\xi,
\end{split}
 \ee
where $G_i\in\mathbb{R}^2$.

Let us compute $g_i$.  Note that $span \{\bp * \\ * \ep  \sin \xi,\,\bp * \\ * \ep  \cos \xi\}$ is an invariant subspace for the invertible operator $(I-Q)[ B(0,\omega,\sigma)-\l I](I-P)$. Plugging $g_1\bp 1 \\ -2 \ep  \sin \xi+g_2\bp 1 \\ -2 \ep  \cos \xi$ into the left-hand side of \eqref{Vsel} (c.f. \eqref{leqV}), we obtain
\be\label{eqg1g2}
\begin{split}
	&(I-Q)[ B(0,\omega,\sigma)-\l I](g_1\bp 1 \\ -2 \ep  \sin \xi+g_2\bp 1 \\ -2 \ep  \cos \xi)\\
		&=(-6-\lambda)g_1\bp 1 \\ -2 \ep\sin \xi+(-6-\lambda)g_2\bp 1 \\ -2 \ep\cos \xi\\
			&+10i\sigma g_1\bp 1 \\ -2 \ep\cos \xi-10i\sigma g_2\bp 1 \\ -2 \ep\sin \xi-5\sigma^2g_1\bp 1 \\ -2 \ep\sin \xi-5\sigma^2g_2\bp 1 \\ -2 \ep\cos \xi.
\end{split}
\ee
Taking into account \eqref{Vsel} and \eqref{eqg1g2}, we have a system of linear equations in $g_1$ and $g_2$:
 \be\label{g1,2}
 \begin{split}
   &(-6-\lambda)g_1-5\sigma^2 g_1-10i\sigma g_2=16i\omega\sigma+40i\omega\sigma h_2+20\omega(1+\sigma^2)h_1,\\
 	&(-6-\lambda)g_2-5\sigma^2 g_2+10i\sigma g_1=8\omega(1+\sigma^2)-40i\omega\sigma h_1+20\omega(1+\sigma^2)h_2.
 \end{split}
 \ee
b.) In a similar fashion, we compute $\partial_{\eps}\partial_{\b_2}\mathcal V|_{\eps=0}$. Taking into account \eqref{h1h2}, we arrive at
$$
\begin{aligned}
	&(I-Q)[ B(0,\omega,\sigma)-\l I]\partial_{\eps}\partial_{\b_2}\mathcal V|_{\eps=0}
		=-(I-Q)[\omega D(\partial^2_{\xi}+2i\sigma-\sigma^2)\\
		&+4\sqrt{\frac{2}{3}(1-16\omega^2)}\cos \xi\bp 1 & 2 \\ -1 & -2 \ep] (U_{2} + \partial_{\b_2}\mathcal V|_{\eps=0})=(-16i\omega\sigma\bp 1 \\ -2 \ep\cos \xi\\
		&+8\omega(1+\sigma^2)\bp 1 \\ -2 \ep\sin \xi)-(40i\omega\sigma h_2\bp 1 \\ -2 \ep\cos \xi+40i\omega\sigma h_1\bp 1 \\ -2 \ep\sin \xi\\
		&-20\omega(1+\sigma^2)h_2\bp 1 \\ -2 \ep\sin \xi+20\omega(1+\sigma^2)h_1\bp 1 \\ -2 \ep\cos \xi\\
		&+12\sqrt{\frac{2}{3}(1-16\omega^2)}h_1\bp 1 \\ -1 \ep\cos^2 \xi-12\sqrt{\frac{2}{3}(1-16\omega^2)}h_2\bp 1 \\ -1 \ep\cos \xi\sin \xi),
\end{aligned}
$$
Hence, $\partial_{\eps}\partial_{\b_2}\mathcal V|_{\eps=0}$ is of the form
$$
\partial_{\eps}\partial_{\b_2}\mathcal V|_{\eps=0}= \tilde g_1\bp 1 \\ -2 \ep  \sin \xi+\tilde g_2\bp 1 \\ -2 \ep  \cos \xi+ \tilde G_3 +\tilde G_4\sin 2\xi+\tilde G_5\cos 2\xi,
$$
 and we have the system of linear equations in $\tilde g_1$ and $\tilde g_2$:
$$
\begin{aligned}
    (-6-\lambda)\tilde g_1-5\sigma^2 \tilde g_1-10i\sigma \tilde g_2&=8\omega(1+\sigma^2)-40i\omega\sigma h_1+20\omega(1+\sigma^2)h_2,\\
  	(-6-\lambda)\tilde g_2-5\sigma^2 \tilde g_2+10i\sigma \tilde g_1&=-16i\omega\sigma-40i\omega\sigma h_2-20\omega(1+\sigma^2)h_1.
\end{aligned}
$$
Therefore, $\tilde g_1=g_2$ and $\tilde g_2=-g_1$. \\
Overall, we have
\be
 \begin{split}\label{Vse}
&\mathcal V(\eps,\omega,\sigma,\lambda,\b)= (h_1\bp 1 \\ -2 \ep  \sin \xi+h_2\bp 1 \\ -2 \ep  \cos \xi+\big(g_1\bp 1 \\ -2 \ep  \sin \xi+g_2\bp 1 \\ -2 \ep  \cos \xi+G_3\\
&G_4\sin 2\xi+G_5\cos 2\xi\big)\eps+\mathcal O(\eps^2))\b_1+(h_2\bp 1 \\ -2 \ep  \sin \xi-h_1\bp 1 \\ -2 \ep  \cos \xi\\
&+\big(g_2\bp 1 \\ -2 \ep  \sin \xi-g_1\bp 1 \\ -2 \ep  \cos \xi+\tilde G_3+\tilde G_4\sin 2\xi+\tilde G_5\cos 2\xi\big)\eps+\mathcal O(\eps^2))\b_2.
\end{split}
 \ee

In order to obtain the reduced equation for the spectral problem, we plug $W = \b_1 U_{1}+\b_{2} U_{2} + \mathcal V$, where $\mathcal V$ is given by \eqref{Vse},
 into the equation 
\be\label{redeq}
\begin{split}
	0
	& =\tilde Q\Big[ B(\eps,\omega,\sigma)-\l I \Big] W.
\end{split}
\ee

Using \eqref{QU1}, \eqref{1,-8}, \eqref{qcos}, \eqref{fs}, \eqref{df}, \eqref{kk}, \eqref{Ve^2} and Lemma \ref{remainder}, we arrive at
\be\label{spmatrixs}
\begin{split}
	0
	& =\tilde Q\Big[ B(\eps,\omega,\sigma)-\l I \Big](\b_1 U_{1}+\b_{2} U_{2})+\tilde Q\Big[ B(\eps,\omega,\sigma)-\l I \Big]\mathcal V\\
	&=\bp \frac{2}{3}\eps^2-2(1-16\omega^2)\eps^2-\lambda & 0 \\ 0 & \frac{2}{3}\eps^2-\frac{2}{3}(1-16\omega^2)\eps^2-\lambda \ep\bp \b_1 \\ \b_2 \ep\\
	&+\bp 2\sigma^2h_2-4i\sigma h_1 & -2\sigma^2h_1-4i\sigma h_2 \\ 2\sigma^2h_1+4i\sigma h_2 & 2\sigma^2h_2-4i\sigma h_1 \ep\bp \b_1 \\ \b_2 \ep\\
	&+\bp 8\omega(1+\sigma^2)h_2-16i\omega\sigma h_1+2\sigma^2g_2-4i\sigma g_1 & -8\omega(1+\sigma^2)h_1-16i\omega\sigma h_2-2\sigma^2g_1-4i\sigma g_2 \\ 8\omega(1+\sigma^2)h_1+16i\omega\sigma h_2+2\sigma^2g_1+4i\sigma g_2 & 8\omega(1+\sigma^2)h_2-16i\omega\sigma h_1+2\sigma^2g_2-4i\sigma g_1 \ep\bp \b_1 \\ \b_2 \ep\eps\\
	&+\bp * & * \\ * & * \ep\bp \b_1 \\ \b_2 \ep\eps^2+\mathcal O(\eps^3).
	\end{split}
	\ee
	
Some computations we need based on \eqref{h1,2} and \eqref{g1,2} are:	
	\be\label{est}
	\begin{split}
	&2\sigma^2h_2-4i\sigma h_1=\frac{-16}{\lambda+6}\sigma^2+\mathcal O(\sigma^4(1+|\lambda|))=-\frac{8}{3}\sigma^2+\mathcal O(\sigma^2(|\lambda|+\sigma^2)),\\
	&2\sigma^2h_1+4i\sigma h_2=\mathcal O(\sigma^3(1+|\lambda|)),\\
	&8\omega(1+\sigma^2)h_2-16i\omega\sigma h_1+2\sigma^2g_2-4i\sigma g_1=8\omega(\frac{7}{9}\sigma^2)-16i\omega\sigma(-\frac{2}{3}i\sigma)+2\sigma^2(-\frac{4}{3}\omega)-4i\sigma(\frac{16}{9}i\omega\sigma)\\
	&+\mathcal O(\sigma^2(\sigma^2+|\lambda|))=\mathcal O(\sigma^2(\sigma^2+|\lambda|)),\\
	&-8\omega(1+\sigma^2)h_1-16i\omega\sigma h_2-2\sigma^2g_1-4i\sigma g_2=-8\omega h_1-4i\sigma g_2+\mathcal O(\sigma^3(1+|\lambda|))\\
	&=-8\omega(-\frac{2}{3}i\sigma)-4i\sigma(-\frac{4}{3}\omega)+\mathcal O(\sigma(\sigma^2+|\lambda|))=\frac{32}{3}i\omega\sigma+\mathcal O(\sigma(\sigma^2+|\lambda|)).
		\end{split}
		\ee

Taking into account \eqref{spmatrix}, \eqref{spmatrixs} and \eqref{est}, we arrive at
\be\label{det}
\begin{split}
	&0 =m(\eps,\omega,\sigma,\lambda)\bp \b_1 \\ \b_2 \ep:=\bp c(\eps)-\frac{8}{3}\sigma^2-\lambda & \frac{32}{3}i\omega\sigma\eps \\ -\frac{32}{3}i\omega\sigma\eps & -\frac{8}{3}\sigma^2-\lambda \ep\bp \b_1 \\ \b_2 \ep\\
	&+
	\bp \mathcal O(\sigma^2(|\lambda|+\sigma^2)+\sigma^2(\sigma^2+|\lambda|)\eps+(|\lambda|+\sigma)\eps^2) & \mathcal O(\sigma^3(1+|\lambda|)+\sigma(\sigma^2+|\lambda|)\eps+\sigma\eps^2+|\lambda|\eps^3) \\  \mathcal O(\sigma^3(1+|\lambda|)+\sigma(\sigma^2+|\lambda|)\eps+\sigma\eps^2+|\lambda|\eps^3) & \mathcal O(\sigma^2(|\lambda|+\sigma^2)+\sigma^2(\sigma^2+|\lambda|)\eps+(|\lambda|+\sigma)\eps^2) \ep
	\\
	&\times \bp \b_1 \\ \b_2 \ep,
\end{split}
\ee
where $c(\eps)=-\frac{4}{3}(1-16\omega^2)\eps^2+\mathcal O(\eps^3)$.
One can improve the error estimates in \eqref{det} using symmetric properties of the eigenvalue problem \eqref{evp}. In particular, we gain additional information on elements of matrix $m$.
\begin{lemma}
The diagonal elements of matrix $m$ in \eqref{det} are even in $\sigma$ and off-diagonal elements are odd in $\sigma$. Moreover, if $\lambda$ is real then $m_{11}$ and $m_{22}$ are real-valued while $m_{12}$ and $m_{21}$ are purely imaginary. 
\end{lemma}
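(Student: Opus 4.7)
The plan is to read off both assertions from two natural symmetries of the full Bloch eigenvalue problem \eqref{evp}, and then to transfer them through the Lyapunov--Schmidt reduction.

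First, I would exploit the $\mathbb{Z}_2$ reflection symmetry. Let $R\colon V(\xi)\mapsto V(-\xi)$. By Theorem \ref{existence}, $\tilde u_{\eps,\omega}$ is even in $\xi$, so $df(\tilde u_{\eps,\omega})$ is even in $\xi$, and $R\,\partial_\xi=-\partial_\xi\,R$ yields
\[
R\,B(\eps,\omega,\sigma)=B(\eps,\omega,-\sigma)\,R.
\]
Second, since $k^2D$ and $df(\tilde u_{\eps,\omega})$ are real, complex conjugation $C$ satisfies
\[
C\,B(\eps,\omega,\sigma)=B(\eps,\omega,-\sigma)\,C.
\]
The kernel basis satisfies $RU_1=U_1$, $RU_2=-U_2$, so that $R$ acts on $\ker L_{per}$ as $J:=\diag(1,-1)$; the same holds for $\bar U_1,\bar U_2$. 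Since the $\bar U_j$ are real-valued and the pairing $\langle\cdot,\cdot\rangle$ is $\xi\mapsto-\xi$ invariant (by a change of variables together with $2\pi$-periodicity), the projections $P$, $Q$ and $\tilde Q$ commute with both $R$ and $C$; in particular $\tilde Q\,R=J\,\tilde Q$ and $\overline{\tilde Q u}=\tilde Q\bar u$.

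Next I would apply these symmetries to equation \eqref{slambda}, whose unique solution (via the implicit function theorem) is $\mathcal V(\eps,\omega,\sigma,\lambda,\beta)$. Acting with $R$ shows that $R\mathcal V(\eps,\omega,\sigma,\lambda,\beta)$ solves the $(-\sigma,\lambda,J\beta)$-problem, and acting with $C$ shows that $\overline{\mathcal V(\eps,\omega,\sigma,\lambda,\beta)}$ solves the $(-\sigma,\bar\lambda,\bar\beta)$-problem. By uniqueness,
\[
\mathcal V(\eps,\omega,-\sigma,\lambda,J\beta)=R\,\mathcal V(\eps,\omega,\sigma,\lambda,\beta),\qquad
\overline{\mathcal V(\eps,\omega,\sigma,\lambda,\beta)}=\mathcal V(\eps,\omega,-\sigma,\bar\lambda,\bar\beta).
\]
Substituting these identities into $m(\eps,\omega,\sigma,\lambda)\beta=\tilde Q[B(\eps,\omega,\sigma)-\lambda](\beta_1U_1+\beta_2U_2+\mathcal V)$ and using the equivariance of $\tilde Q$, I obtain the two matrix functional equations
\[
m(\eps,\omega,-\sigma,\lambda)\,J=J\,m(\eps,\omega,\sigma,\lambda),\qquad
\overline{m(\eps,\omega,\sigma,\lambda)}=m(\eps,\omega,-\sigma,\bar\lambda).
\]

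The first identity reads $m(-\sigma)=J\,m(\sigma)\,J$, which flips the sign of the off-diagonal entries only, yielding $m_{11}(-\sigma)=m_{11}(\sigma)$, $m_{22}(-\sigma)=m_{22}(\sigma)$ and $m_{12}(-\sigma)=-m_{12}(\sigma)$, $m_{21}(-\sigma)=-m_{21}(\sigma)$; this is the parity statement. Specialising the second identity to $\lambda\in\mathbb R$ and combining with the first gives $\overline{m(\sigma,\lambda)}=J\,m(\sigma,\lambda)\,J$, which forces $m_{11},m_{22}\in\mathbb R$ and $m_{12},m_{21}\in i\mathbb R$. The one place to be careful---and the main obstacle---is verifying that the symmetry-transformed data $(\eps,\omega,-\sigma,\lambda,J\beta)$ and $(\eps,\omega,-\sigma,\bar\lambda,\bar\beta)$ actually lie in the neighbourhood on which the IFT produced the unique $\mathcal V$; this is dealt with by shrinking the neighbourhood at the outset to one which is symmetric under $\sigma\mapsto-\sigma$, $\beta\mapsto J\beta$ and under $\lambda\mapsto\bar\lambda$, $\beta\mapsto\bar\beta$, which is legitimate because the original $\mathcal V$ is uniquely determined on any smaller neighbourhood.
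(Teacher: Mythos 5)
Your proof is correct and takes essentially the same route as the paper: both rest on the reflection symmetry $W(\xi)\mapsto W(-\xi)$ and the conjugation symmetry $W\mapsto \overline{W}$ of the Bloch eigenvalue problem, transferred to the reduced matrix $m$ to obtain $m(-\sigma)=Jm(\sigma)J$ with $J=\mathrm{diag}(1,-1)$ and $\overline{m(\sigma,\lambda)}=m(-\sigma,\lambda)$ for real $\lambda$. The only difference is one of detail: the paper cites the general equivariance of Lyapunov--Schmidt reduction (Golubitsky--Schaeffer, Prop.\ 3.3, Ch.\ VII), whereas you verify that equivariance directly from the commutation of the projections with the symmetries and uniqueness in the implicit function theorem.
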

\begin{proof}
First, we note that \eqref{evp} possesses two symmetries \cite{M2}
\be\label{symms}
\begin{split}
&[ B(\eps,\omega,\sigma)-\l I \Big]R_1=R_1[ B(\eps,\omega,-\sigma)-\l I \Big],\,\lambda\in\mathbb{C}\\
&[ B(\eps,\omega,\sigma)-\l I \Big]R_2=R_2[ B(\eps,\omega,-\sigma)-\l I \Big],\,\lambda\in\mathbb{R}.
\end{split}
\ee
where $R_1W(\xi)=W(-\xi)$, and $R_2W(\xi)=\overline W(\xi)$. \\
Following the steps of proof of Proposition 3.3 \cite[Chapter VII]{GS}, one can show that the reduced equation \eqref{redeq} commutes with symmetries defined in \eqref{symms}, i.e.
\begin{equation}
m(\eps,\omega,\sigma,\lambda)\bp \b_1 \\ -\b_2 \ep=\bp m_{11}(\eps,\omega,-\sigma,\lambda) &m_{12}(\eps,\omega,-\sigma,\lambda)\\ -m_{21}(\eps,\omega,-\sigma,\lambda)&-m_{22}(\eps,\omega,-\sigma,\lambda) \ep\bp \b_1 \\ \b_2 \ep,\,\lambda\in\mathbb{C}.
\end{equation}
And
\begin{equation}
m(\eps,\omega,\sigma,\lambda)\bp \b_1 \\ \b_2 \ep=\overline m(\eps,\omega,-\sigma,\lambda)\bp \b_1 \\ \b_2 \ep,\,\lambda\in\mathbb{R}.
\end{equation}
\end{proof}
\begin{corollary}\label{errcorr}
The error matrix in \eqref{det} has the form
\begin{equation}
\bp \mathcal O(\sigma^2(|\lambda|+\sigma^2)+\sigma^2(\sigma^2+|\lambda|)\eps+(|\lambda|+\sigma^2)\eps^2) & \mathcal O(\sigma^3(1+|\lambda|)+\sigma(\sigma^2+|\lambda|)\eps+\sigma\eps^2) \\  \mathcal O(\sigma^3(1+|\lambda|)+\sigma(\sigma^2+|\lambda|)\eps+\sigma\eps^2) & \mathcal O(\sigma^2(|\lambda|+\sigma^2)+\sigma^2(\sigma^2+|\lambda|)\eps+(|\lambda|+\sigma^2)\eps^2) \ep,
\end{equation}
i.e. for the diagonal entries we conclude that $O(\sigma\eps^2)=O(\sigma^2\eps^2)$ and for the off-diagonal entries we conclude that $O(|\lambda|\eps^3)=0$.
\end{corollary}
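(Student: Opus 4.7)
The plan is to exploit the parity properties of the entries of $m(\eps,\omega,\sigma,\lambda)$ that were just established in the preceding lemma. Every entry of $m$ depends smoothly on $(\eps,\omega,\sigma,\lambda)$ via the Implicit Function Theorem applied in the Lyapunov--Schmidt reduction, so admits a convergent Taylor expansion about the origin. The lemma says that $m_{11},m_{22}$ are even in $\sigma$ and $m_{12},m_{21}$ are odd in $\sigma$. In terms of Taylor expansions, this forces every monomial in $m_{11},m_{22}$ with an odd power of $\sigma$ to have zero coefficient, and every monomial in $m_{12},m_{21}$ with an even power of $\sigma$ (in particular every $\sigma$-independent monomial) to have zero coefficient.

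For the diagonal entries, I would examine the three error terms in \eqref{det} one at a time. The first two contributions $\sigma^2(|\lambda|+\sigma^2)$ and $\sigma^2(\sigma^2+|\lambda|)\eps$ are manifestly even in $\sigma$ and are left unchanged. The third contribution $(|\lambda|+\sigma)\eps^2$ splits into $|\lambda|\eps^2$ (trivially even) plus $\sigma\eps^2$ (odd in $\sigma$). Evenness forces the coefficient of every monomial of the form $\sigma^{2j+1}\eps^2\lambda^\ell$ to vanish, so the $\sigma\eps^2$ contribution is identically zero on this error source; the next term with a $\sigma$ factor must carry at least $\sigma^2$, yielding $(|\lambda|+\sigma^2)\eps^2$ as the refined diagonal error.

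For the off-diagonal entries, the first two error pieces $\sigma^3(1+|\lambda|)$ and $\sigma(\sigma^2+|\lambda|)\eps$ and $\sigma\eps^2$ are odd in $\sigma$ and are left as stated. The remaining piece $|\lambda|\eps^3$ is $\sigma$-independent, hence even in $\sigma$; by the oddness of $m_{12}$ and $m_{21}$, the coefficient of every monomial of the form $\sigma^{2j}\eps^3\lambda^\ell$ must vanish, so this error term is identically zero and may be dropped. This is exactly the stated improvement $\mathcal O(|\lambda|\eps^3)=0$ in the corollary.

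The only technical obstacle is to make sure that the parity argument acts cleanly on each symbolic source of error rather than on some opaque composite. Since the preceding lemma is phrased for the exact matrix entries (not just their leading parts), and since the entries are analytic in $(\eps,\sigma,\lambda)$, the parity kills the offending Taylor monomials and the promotion of $\sigma\eps^2$ to $\sigma^2\eps^2$ and the elimination of $|\lambda|\eps^3$ both go through without any loss (and without shrinking the range of $(\eps,\sigma,\lambda)$). No additional computation is needed beyond bookkeeping of which monomials are permitted by the parity constraints.
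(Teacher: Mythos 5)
Your argument is correct and is exactly the reasoning the paper intends: the corollary is stated as an immediate consequence of the preceding parity lemma, and your Taylor-monomial bookkeeping (evenness of $m_{11},m_{22}$ in $\sigma$ promoting $\sigma\eps^2$ to $\sigma^2\eps^2$, oddness of $m_{12},m_{21}$ killing the $\sigma$-independent $|\lambda|\eps^3$ contribution) is precisely the implicit proof. No gaps; the analyticity of the reduced matrix entries needed to expand in monomials is available from the Lyapunov--Schmidt construction.
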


\begin{proof}[Proof of Theorem \ref{stabthm}]
	Let us take the determinant of \eqref{det}. 
	\be
	\begin{split}
	\det	m(\eps,\omega,\sigma,\lambda)&:=\lambda^2-\lambda\big(c(\eps)-\frac{16}{3}\sigma^2\\
	&+O(\sigma^2(|\lambda|+\sigma^2)+\sigma^2(\sigma^2+|\lambda|)\eps+(|\lambda|+\sigma^2)\eps^2)\big)\\
		&+(-c(\eps)+\frac{8}{3}\sigma^2)\frac{8}{3}\sigma^2-(\frac{32}{3}\omega\sigma\eps)^2\\
		&+O\big((\eps^2+\sigma^2)(\sigma^2(|\lambda|+\sigma^2)+\sigma^2(\sigma^2+|\lambda|)\eps+(|\lambda|+\sigma^2)\eps^2)\big)\\
		&+O(\sigma\eps)O(\sigma^3(1+|\lambda|)+\sigma(\sigma^2+|\lambda|)\eps+\sigma\eps^2)\\
		&+\big(O(\sigma^3(1+|\lambda|)+\sigma(\sigma^2+|\lambda|)\eps+\sigma\eps^2)\big)^2.
	\end{split}
	\ee
	According to the Weierstrass Preparation Theorem, there exists an analytic function $q(\eps,\sigma,\lambda)$ in a neighborhood of $(0,0,0)$ such that $q(0,0,0)=1$ and
	 	\be\label{wpt}
	 	\begin{split}
	 		q(\eps,\sigma,\lambda)\det m(\eps,\omega,\sigma,\lambda)=\lambda^2+a_1\lambda+a_0.
	 	\end{split}
	 	\ee
	 	Notice that
	 	\be
	 	\begin{split}
	 		&a_0(\eps,\sigma)=q(\eps,\sigma,0)\det m(\eps,\omega,\sigma,0)=\det m(\eps,\omega,\sigma,0)+\mathcal O\big(\sigma^2(\eps+\sigma)^3\big)\\
	 		&=(-c(\eps)+\frac{8}{3}\sigma^2)\frac{8}{3}\sigma^2-(\frac{32}{3}\omega\sigma\eps)^2+\mathcal O\big(\sigma^2(\eps+\sigma)^3\big),\\
	 		&a_1(\eps,\sigma)=q'_{\lambda}(\eps,\sigma,0)\det m(\eps,\omega,\sigma,0)
	 		+q(\eps,\sigma,0)(\det m)'_{\lambda}(\eps,\omega,\sigma,0)\\
	 		&=(\det m)'_{\lambda}(\eps,\omega,\sigma,0)+\mathcal O((\eps+\sigma)^3)=-c(\eps)+\frac{16}{3}\sigma^2+\mathcal O((\eps+\sigma)^3).
	 	\end{split}
	 	\ee
	 	
	 	Therefore, the eigenvalue problems boils down to the second order polynomial
	 	\be
	 	\begin{split}
	 		\lambda^2-\big(c_1(\eps)-\frac{16}{3}\sigma^2+\mathcal O(\sigma^2(\eps+\sigma))\big)\lambda+\frac{32}{9}\sigma^2\big(\eps^2(1-48\omega^2)+2\sigma^2+O((\eps+\sigma)^3)\big)=0,
	 	\end{split}
	 	\ee
	 	where $c_1(\eps,\sigma)=-\frac{4}{3}(1-16\omega^2)\eps^2+\mathcal O(\eps^2(\eps+\sigma))$.
	 	Therefore, the roots are of the form
		\be \label{quad1}
	 		 	\begin{split}
	 		 		\lambda_{1,2}=\frac{c_1(\eps,\sigma)-\frac{16}{3}\sigma^2+\mathcal O(\sigma^2(\eps+\sigma))\pm\sqrt{c_1^2(\eps,\sigma)+\frac{64^2}{9}(\omega\eps\sigma)^2+O(\sigma^2(\eps+\sigma)^3)}}{2}
	 		 	\end{split}
	 		 	\ee
	 		 	
	 	Next, let $\sigma=\eps\hat\sigma$. Then
		\be\label{sqrtexp}
	 		 		 	\begin{split}
	 		 		 		\lambda_{1,2}=\eps^2\frac{\tilde c(\eps,\sigma)-\frac{16}{3}\hat\sigma^2+\mathcal O(\hat\sigma^2(\eps+\sigma))\pm\sqrt{\tilde c^2(\eps,\sigma)+\frac{64^2}{9}(\omega\hat\sigma)^2+O(\hat\sigma^2(1+\hat\sigma)^2(\eps+\sigma))}}{2},
	 		 		 	\end{split}
	 		 		 	\ee
	 		 		 	where $\tilde c(\eps,\sigma)=-\frac{4}{3}(1-16\omega^2)+\mathcal O(\eps+\sigma)$.\\
	 		 		 	Next, we fix $\omega$ such that $\omega^2<\frac{1}{48}$. Then we consider three different cases: 1) $|\hat \sigma|<<1$, 2) $1/C\leq|\hat \sigma|\leq C$, 3) $|\hat \sigma|>>1$.\\
	 		 		 	1) $|\hat \sigma|<<1$. We expand $\lambda_{1,2}$ w.r.t. $\hat \sigma$.\\
	 		 		 	\be\label{eBM}
	 		 		 	\begin{split}
	 		 		 	&\lambda_{1}(\eps,\sigma)=-\frac{4}{3}(1-16\omega^2)\eps^2+\mathcal O(\eps^3)+( c'_{1\sigma}(\eps,0))\sigma-(\frac{8(1+16\omega^2)}{3(1-16\omega^2)}+\mathcal O(\eps))\sigma^2+\mathcal O(\sigma^3),\\
	 		 		 	&\lambda_{2}(\eps,
	 		 		 	\sigma)=-(\frac{8(1-48\omega^2)}{3(1-16\omega^2)}+\mathcal O(\eps))\sigma^2+\mathcal O(\sigma^3).
	 		 		 		\end{split}
	 		 		 			\ee
	 		 		 	2) $1/C\leq|\hat \sigma|\leq C$.\\
						For the unperturbed Ginzburg Landau case, we know (see Section 
						\ref{s:GLcompare} for details) that $\Re\lambda_j\leq\eta<0$. Since $\hat\sigma$ belongs to the compact interval, we deduce that $\Re\lambda_j\leq\hat\eta<0$.\\
	 		 		 	3) $|\hat \sigma|>>1$. The roots $\lambda_1$ and $\lambda_2$ are controlled by $-\frac{16}{3}\sigma^2$. 
	 		 		 	\be
	 		 		 		\begin{split}
	 		 		 			&	\Re\lambda_{1}\leq c(\eps,\omega)+\tilde c(\eps,\omega)\sigma-\tilde{\tilde c}_1(\eps,\omega)\sigma^2+\mathcal O(\sigma^3),\\
	 		 		 			&	\Re\lambda_{2}\leq -\tilde{\tilde c}_2(\eps,\omega)\sigma^2+\mathcal O(\sigma^3),
	 		 		 		\end{split}
	 		 		 		\ee 
	 		 		 		For a fixed $\omega$ such that $\frac{1}{48}<\omega^2\leq\frac{1}{16}$ similar to the case 2 we deduce that  $\max_\sigma\{\Re\lambda_1,\Re\lambda_2\}>0$.
\end{proof}

\subsection{Reality of critical eigenmodes}\label{s:reality}
We now perform a higher-order Lyapunov--Shmidt reduction, addressing the more subtle question of reality of
the critical modes $\lambda_j(\cdot)$.

\begin{proof}[Proof of Theorem \ref{reality}]
	We need further extensions:
	\be
	\begin{split}
		&2\sigma^2h_2-4i\sigma h_1=\frac{-16}{\lambda+6}\sigma^2+\mathcal O(\sigma^4(1+|\lambda|))=-\frac{8}{3}\sigma^2-\frac{38}{27}\sigma^4-\frac{65}{243}\sigma^6+(\frac{{4}}{9}\sigma^2+\frac{43}{27}\sigma^4)\lambda-\frac{2}{27}\sigma^2\lambda^2\\
		&+\mathcal O(\sigma^4(\lambda+\sigma^2)^2+\lambda^3\sigma^2),\\
		&2\sigma^2h_1+4i\sigma h_2=\frac{16}{9}i\sigma^3+\frac{70}{81}i\sigma^5-\frac{28}{27}i\sigma^3\lambda+\mathcal O(\sigma^3(\lambda+\sigma^2)^2),\\
		&8\omega(1+\sigma^2)h_2-16i\omega\sigma h_1+2\sigma^2g_2-4i\sigma g_1=-\frac{212}{81}\omega\sigma^4-\frac{40}{9}\omega\sigma^2\lambda+\mathcal O(\sigma^2(\lambda+\sigma^2)^2),\\
		&-8\omega(1+\sigma^2)h_1-16i\omega\sigma h_2-2\sigma^2g_1-4i\sigma g_2=\frac{32}{3}i\omega\sigma+\frac{16}{27}i\omega\sigma^3-\frac{16}{9}i\omega\sigma\lambda+\mathcal O(\sigma(\lambda+\sigma^2)^2).
	\end{split}
	\ee
		From the co-periodic case, we need the following expansion:
		\be\label{le2}
		\begin{split}
			8\omega\eps\frac{8\omega\eps}{-6-\lambda}=-\frac{32}{3}\omega^2\eps^2+\frac{16}{9}\omega^2\eps^2\lambda+\mathcal O(\eps^2\lambda^2).
		\end{split}
		\ee
	Then,
	\begin{align*}
	\begin{split}
		&m(\eps,\omega,\sigma,\lambda):=\bp m_{11}(\eps,\omega,\sigma,\lambda)&m_{12}(\eps,\omega,\sigma,\lambda)\\
		  m_{21}(\eps,\omega,\sigma,\lambda)&m_{22}(\eps,\omega,\sigma,\lambda)\ep.		
	\end{split}
	\end{align*}
	where
\begin{align*}
		\begin{split}
			m_{11}&=c(\eps)-\lambda-\frac{8}{3}\sigma^2-\frac{38}{27}\sigma^4-\frac{65}{243}\sigma^6+(\frac{{4}}{9}\sigma^2+\frac{43}{27}\sigma^4)\lambda-\frac{2}{27}\sigma^2\lambda^2-\frac{212}{81}\omega\sigma^4\eps-\frac{40}{9}\omega\sigma^2\lambda\eps\\
			&+p_1\sigma^2\eps^2+\frac{16}{9}\omega^2\eps^2\lambda+\mathcal O(\sigma^4(\lambda+\sigma^2)^2+\lambda^3\sigma^2+\sigma^2(\lambda+\sigma^2)^2\eps+(\lambda+\sigma^2)^2\eps^2+(\lambda+\sigma^2)\eps^3),\\
			m_{12}&=-\frac{16}{9}i\sigma^3-\frac{70}{81}i\sigma^5+\frac{28}{27}i\sigma^3\lambda+\frac{32}{3}i\omega\sigma\eps+\frac{16}{27}i\omega\sigma^3\eps-\frac{16}{9}i\omega\sigma\lambda\eps+(\frac{32}{27}\omega^2+\frac{10}{27})i\sigma\eps^2\\			
			&+\mathcal O(\sigma^3(\lambda+\sigma^2)^2+\sigma(\lambda+\sigma^2)^2\eps+(\sigma\lambda+\sigma^3)\eps^2+\sigma\eps^3)),\\			
		\end{split}
	\end{align*}
		\begin{align*}
		\begin{split}				
			m_{21}&=\frac{16}{9}i\sigma^3+\frac{70}{81}i\sigma^5-\frac{28}{27}i\sigma^3\lambda-\frac{32}{3}i\omega\sigma\eps-\frac{16}{27}i\omega\sigma^3\eps+\frac{16}{9}i\omega\sigma\lambda\eps-(\frac{32}{27}\omega^2+\frac{10}{27})i\sigma\eps^2\\			
						&+\mathcal O(\sigma^3(\lambda+\sigma^2)^2+\sigma(\lambda+\sigma^2)^2\eps+(\sigma\lambda+\sigma^3)\eps^2+\sigma\eps^3)),\\
	m_{22}&=-\lambda-\frac{8}{3}\sigma^2-\frac{38}{27}\sigma^4-\frac{65}{243}\sigma^6+(\frac{{4}}{9}\sigma^2+\frac{43}{27}\sigma^4)\lambda-\frac{2}{27}\sigma^2\lambda^2-\frac{212}{81}\omega\sigma^4\eps-\frac{40}{9}\omega\sigma^2\lambda\eps\\
				&+p_2\sigma^2\eps^2+\frac{16}{9}\omega^2\eps^2\lambda+\mathcal O(\sigma^4(\lambda+\sigma^2)^2+\lambda^3\sigma^2+\sigma^2(\lambda+\sigma^2)^2\eps+(\lambda+\sigma^2)^2\eps^2+(\lambda+\sigma^2)\eps^3).\\		
		\end{split}
	\end{align*}
	Let us take the determinant of $m$. Note that $\det m$ is a real-valued function.
	\be\lb{deter}
	\begin{split}
		&\det	m(\eps,\omega,\sigma,\lambda)=\\
		&=\frac{4}{27}\sigma^2\lambda^3+\{(1-\frac{{4}}{9}\sigma^2-\frac{43}{27}\sigma^4+\frac{40}{9}\omega\sigma^2\eps-\frac{16}{9}\omega^2\eps^2)^2-(\frac{28}{27}\sigma^3-\frac{16}{9}\omega\sigma\eps)^2+\frac{32}{81}\sigma^4\}\lambda^2\\
		&+\lambda\big(-c(\eps)+\frac{16}{3}\sigma^2+(\frac{76}{27}-\frac{64}{27})\sigma^4+(\frac{130}{243}-\frac{38}{27}\cdot\frac{8}{9}-\frac{16}{3}\cdot\frac{43}{27}+\frac{32}{9}\cdot\frac{28}{27})\sigma^6\\
		&+(\frac{424}{81}+\frac{16}{3}\cdot\frac{40}{9}-\frac{32}{9}\cdot\frac{16}{9}-\frac{56}{27}\cdot\frac{32}{3})\omega\sigma^4\eps+\frac{4}{9}c(\eps)\sigma^2+(-p_1-p_2-\frac{16}{3}\cdot\frac{16}{9}\omega^2+\frac{64}{3}\cdot\frac{16}{9}\omega^2)\sigma^2\eps^2\\
		&+\mathcal O(\sigma^4(\lambda+\sigma^2)^2+\lambda^3\sigma^2+\sigma^2(\lambda+\sigma^2)^2\eps+(\lambda+\sigma^2)^2\eps^2+(\lambda+\sigma^2)\eps^3)\big)\\
		&+(-c(\eps)+\frac{8}{3}\sigma^2)\frac{8}{3}\sigma^2+(\frac{608}{81}-\frac{16^2}{81})\sigma^6+(\frac{16}{3}\cdot\frac{65}{243}+\frac{38^2}{27^2}-\frac{32}{9}\cdot\frac{70}{81})\sigma^8\\
		&+(\frac{32^2}{27}\sigma^4+(\frac{16}{3}\cdot\frac{212}{81}+\frac{32}{9}\cdot\frac{16}{27}+\frac{140}{81}\cdot\frac{32}{3})\sigma^6)\omega\eps-\frac{38}{27}c(\eps)\sigma^4+(-(\frac{32}{3}\omega\sigma)^2+(-\frac{8}{3}(p_1+p_2)\\
		&+\frac{32}{9}(\frac{32}{27}\omega^2+\frac{10}{27})-\frac{64}{3}\cdot\frac{16}{27}\omega^2)\sigma^4)\eps^2\\
		&+\mathcal O\big((\eps^2+\sigma^2)(\sigma^4(\lambda+\sigma^2)^2+\lambda^3\sigma^2+\sigma^2(\lambda+\sigma^2)^2\eps+(\lambda+\sigma^2)^2\eps^2+(\lambda+\sigma^2)\eps^3)\big)\\
		&+\mathcal O(\sigma\eps)\mathcal O(\sigma^3(\lambda+\sigma^2)^2+\sigma(\lambda+\sigma^2)^2\eps+(\sigma\lambda+\sigma^3)\eps^2+\sigma\eps^3)\\
		&+\big(O(\sigma^3(\lambda+\sigma^2)^2+\sigma(\lambda+\sigma^2)^2\eps+(\sigma\lambda+\sigma^3)\eps^2+\sigma\eps^3)\big)^2.
	\end{split}
	\ee
	According to the Weierstrass Preparation Theorem, there exists a real analytic function $q(\eps,\sigma,\lambda)$ in a neighborhood of $(0,0,0)$ such that $q(0,0,0)=1$ and
	\be\label{wpt2}
	\begin{split}
		q(\eps,\sigma,\lambda)\det m(\eps,\omega,\sigma,\lambda)=\lambda^2+a_1\lambda+a_0.
	\end{split}
	\ee
	
	Let a real analytic function $q(\eps,\sigma,\lambda)$ be of the form
	$
	q(\eps,\sigma,\lambda)=1+\nu_0(\eps,\sigma)+\nu_1(\eps,\sigma)\lambda+\nu_2(\eps,\sigma)\lambda^2+\ldots
	$
	Then,
	$$
	\begin{aligned}
	&a_0(\eps,\sigma)=(1+\nu_0(\eps,\sigma))\det m(\eps,\omega,\sigma,0)
	,\\
	&a_1(\eps,\sigma)=\nu_1(\eps,\sigma)\det m(\eps,\omega,\sigma,0)
	+(1+\nu_0(\eps,\sigma))(\det m)'_{\lambda}(\eps,\omega,\sigma,0).\\	 	
	\end{aligned}
	$$
	Next, in \eqref{wpt2}, we compare the coefficients in front of different powers of $\sigma$, to obtain:
	
	\begin{align*}
	\sigma^0:\quad&(1+\nu_0(\eps,0)+\nu_1(\eps,0)\lambda+\nu_2(\eps,0)\lambda^2+\ldots)((1-\frac{32}{9}\omega^2\eps^2)\lambda^2+\lambda(-c(\eps)+\mathcal O(\lambda^2\eps^2+\lambda\eps^3)))\\
	&=\lambda^2+\lambda(-c(\eps)+\mathcal O(\eps^3)),\\	
	\sigma^1:\quad&(\nu'_{0\sigma}(\eps,0)+\nu'_{1\sigma}(\eps,0)\lambda+\nu'_{2\sigma}(\eps,0)\lambda^2+\ldots)(\lambda^2+\lambda(-c(\eps)+\mathcal O(\lambda\eps^2)))\\
	&=\mathcal O(\eps^2)\lambda,\\ 	
		\end{align*}
	\begin{align*}
	\sigma^2:\quad&(1+\nu_0(\eps,0)+\nu_1(\eps,0)\lambda+\nu_2(\eps,0)\lambda^2+\ldots)\big(\frac{4}{27}\lambda^3+(-\frac{{8}}{9}+\frac{80}{9}\omega\eps)\lambda^2\\
	&+\lambda(\frac{{16}}{3}+\mathcal O(\lambda^3+\lambda^2\eps+\lambda\eps^2+\eps^2))-\frac{8}{3}c(\eps)-(\frac{32}{3}\omega\eps)^2+\mathcal O(\lambda^2\eps^2+\lambda\eps^3+\eps^4)\big)\\
	& +\frac{1}{2}(\nu''_{0\sigma}(\eps,0)+\nu''_{1\sigma}(\eps,0)\lambda+\nu''_{2\sigma}(\eps,0)\lambda^2+\ldots)(\lambda^2+\lambda(-c(\eps)+\mathcal O(\lambda\eps^2)))\\
	&=(\frac{16}{3}+\mathcal O(\eps))\lambda-\frac{8}{3}c(\eps)-(\frac{32}{3}\omega\eps)^2+\mathcal O(\eps^3),\\ 
	\sigma^3:\quad&(\nu'_{0\sigma}(\eps,0)+\nu'_{1\sigma}(\eps,0)\lambda+\nu'_{2\sigma}(\eps,0)\lambda^2+\ldots)\big(\frac{4}{27}\lambda^3+(-\frac{{8}}{9}+\frac{80}{9}\omega\eps)\lambda^2\\
	&+\lambda(\frac{{16}}{3}+\mathcal O(\lambda^3+\lambda^2\eps+\lambda\eps^2+\eps^2))+\mathcal O(\eps^2)\big)+\frac{1}{3!}(\nu'''_{0\sigma}(\eps,0)+\nu'''_{1\sigma}(\eps,0)\lambda+\nu'''_{2\sigma}(\eps,0)\lambda^2+\ldots)\\
	&\times(\lambda^2+\lambda(-c(\eps)+\mathcal O(\lambda\eps^2)))=\mathcal O(1+\eps)\lambda+\mathcal O(\eps^2),\\
	\sigma^4:\quad&(1+\nu_0(\eps,0)+\nu_1(\eps,0)\lambda+\nu_2(\eps,0)\lambda^2+\ldots)\big((\frac{{16}}{81}-\frac{86}{27}+\frac{32}{81})\lambda^2+\lambda(\frac{{4}}{9}+\mathcal O(\lambda^2+\lambda\eps+\eps))
	\\&+\frac{64}{9}+\mathcal O(\lambda^3+\lambda\eps^2+\eps)\big)+\frac{1}{2}(\nu''_{0\sigma}(\eps,0)+\nu''_{1\sigma}(\eps,0)\lambda+\nu''_{2\sigma}(\eps,0)\lambda^2+\ldots)\\
	&\times(\frac{4}{27}\lambda^3+(-\frac{{8}}{9}+\frac{80}{9}\omega\eps)\lambda^2+\lambda(\frac{{16}}{3}+\mathcal O(\lambda^3+\lambda^2\eps+\lambda\eps^2+\eps^2))\\
		&-\frac{8}{3}c(\eps)-(\frac{32}{3}\omega\eps)^2+\mathcal O\big(\lambda^2\eps^2+\lambda\eps^3+\eps^4\big))\\
		& +\frac{1}{4!}(\nu^{(4)}_{0\sigma}(\eps,0)+\nu^{(4)}_{1\sigma}(\eps,0)\lambda+\nu^{(4)}_{2\sigma}(\eps,0)\lambda^2+\ldots)\big(\lambda^2+\lambda(-c(\eps)+\mathcal O(\lambda\eps^2))\big)\\
		&=\mathcal O(1+\eps)\lambda+\mathcal O(1+\eps).
	\end{align*}
	
	Therefore, comparing the coefficients in front of different powers of $\lambda$ leads to
	$$
	\begin{aligned}
	&(1+\nu_0(\eps,0))(1-\frac{32}{9}\omega^2\eps^2+\mathcal O(\eps^3))-\nu_1(\eps,0)c(\eps)=1,\\
	&(1+\nu_0(\eps,0))\mathcal O(\eps^2)+\nu_1(\eps,0)(1-\frac{32}{9}\omega^2\eps^2+\mathcal O(\eps^3))-\nu_2(\eps,0) c(\eps)=0,\\
	&(1+\nu_0(\eps,0))\mathcal O(\eps^2)+\nu_1(\eps,0)\mathcal O(\eps^2)+\nu_2(\eps,0) (1+\mathcal O(\eps^2))+\nu_3(\eps,0)\mathcal O(\eps^2)=0,\\
	&\nu'_{0\sigma}(\eps,0)(1+\mathcal O(\eps^2))-\nu'_{1\sigma}(\eps,0)c(\eps)=0,\\
	&\nu'_{0\sigma}(\eps,0)\mathcal O(\eps^2)+\nu'_{1\sigma}(\eps,0)(1+\mathcal O(\eps^2))-\nu'_{2\sigma}(\eps,0) c(\eps)=0,\\
	&(1+\nu_0(\eps,0))(-\frac{{8}}{9}+\frac{80}{9}\omega\eps+\mathcal O(\eps^2))+\nu_1(\eps,0)(\frac{16}{3}+\mathcal O(\eps^2))+\nu_2(\eps,0)(-\frac{8}{3}c(\eps)-(\frac{32}{3}\omega\eps)^2+\mathcal O(\eps^4))\\
	&+\frac{1}{2}\nu''_{0\sigma}(\eps,0)(1+\mathcal O(\eps^2))-\frac{1}{2}\nu''_{1\sigma}(\eps,0) c(\eps)=0,\\
	&(1+\nu_0(\eps,0))(\frac{4}{27}+\mathcal O(\eps))+\nu_1(\eps,0)(-\frac{{8}}{9}+\mathcal O(\eps))+\nu_2(\eps,0)(\frac{16}{3}+\mathcal O(\eps^2))+\mathcal O(\eps^2)\\
	&+\frac{1}{2}\nu''_{0\sigma}(\eps,0)\mathcal O(\eps^2)+\frac{1}{2}\nu''_{1\sigma}(\eps,0)(1+\mathcal O(\eps^2))-\frac{1}{2}\nu''_{2\sigma}(\eps,0)c(\eps)=0,\\
	&\nu'_{0\sigma}(\eps,0)(-\frac{8}{9}+\mathcal O(\eps))+\nu'_{1\sigma}(\eps,0)(\frac{16}{3}+\mathcal O(\eps^2))+\mathcal O(\eps^2)+\frac{1}{3!}\nu'''_{0\sigma}(\eps,0)(1+\mathcal O(\eps^2))+\mathcal O(\eps^2)=0,\\
		\end{aligned}
		$$
		$$
		\begin{aligned}
	&(1+\nu_0(\eps,0))(-\frac{210}{81}+\mathcal O(\eps))+\nu_1(\eps,0)(\frac{4}{9}+\mathcal O(\eps))+\nu_2(\eps,0)(\frac{64}{9}+\mathcal O(\eps))+\frac{1}{2}\nu''_{0\sigma}(\eps,0)(-\frac{8}{9}+\mathcal O(\eps))\\
	&+\frac{1}{2}\nu''_{1\sigma}(\eps,0)(\frac{16}{3}+\mathcal O(\eps^2))+\mathcal O(\eps^2)+\frac{1}{4!}\nu^{(4)}_{0\sigma}(\eps,0)(1+\mathcal O(\eps^2))+\mathcal O(\eps^2)=0.
	\end{aligned}
	$$
	It is clear that $\nu_0(\eps,0)=\frac{32}{9}\omega^2\eps^2+\mathcal O(\eps^3)$, $\nu_1(\eps,0)=\mathcal O(\eps^2)$, $\nu_2(\eps,0)=\mathcal O(\eps^2)$, $\nu'_{0\sigma}(\eps,0)=\mathcal O(\eps^2)$, $\nu'_{1\sigma}(\eps,0)=\mathcal O(\eps^2)$, $\frac{1}{2}\nu''_{0\sigma}(\eps,0)=\frac{8}{9}-\frac{80}{9}\omega\eps+\mathcal O(\eps^2) $, $\frac{1}{2}\nu''_{1\sigma}(\eps,0)=-\frac{4}{27}+\mathcal O(\eps) $, $\frac{1}{3!}\nu'''_{0\sigma}(\eps,0)=\mathcal O(\eps^2)$, $\frac{1}{4!}\nu^{(4)}_{0\sigma}(\eps,0)=\frac{210}{81}+\frac{64}{81}+\frac{64}{81}+\mathcal O(\eps)=\frac{338}{81}+\mathcal O(\eps)$.\\
	Then,
	$$
	\begin{aligned}
	a_0(\eps,\sigma)&=(1+\nu_0(\eps,\sigma))\det m(\eps,\omega,\sigma,0)\\
	&=(-c(\eps)+\frac{8}{3}\sigma^2)\frac{8}{3}\sigma^2+(\frac{352}{81}+\frac{8^3}{81})\sigma^6+(\frac{244}{3^6}+\frac{8}{9}\cdot\frac{352}{81}+\frac{338}{81}\cdot\frac{64}{9})\sigma^8\\
	&+(\frac{32^2}{27}\sigma^4+(\frac{8384}{3^5}+\frac{8}{9}\cdot\frac{32^2}{27}-\frac{80}{9}\cdot\frac{64}{9})\sigma^6)\omega\eps-\frac{38}{27}c(\eps)\sigma^4+(-(\frac{32}{3}\omega\sigma)^2+(-\frac{8}{3}(p_1+p_2)\\
	&-\frac{2048}{3^5}\omega^2+\frac{320}{3^5}+\frac{32}{9}\cdot\frac{64}{9}\omega^2-\frac{8}{9}\cdot\frac{32^2}{3^2}\omega^2)\sigma^4)\eps^2-\frac{8}{9}\cdot\frac{8}{3}\sigma^4c(\eps)\\
	&+\mathcal O\big(\sigma^{9}+\sigma^7\eps+\sigma^5\eps^2+\sigma^4\eps^3+\sigma^2\eps^4\big)
	,\\
	a_1(\eps,\sigma)&=\nu_1(\eps,\sigma)\det m(\eps,\omega,\sigma,0)
	+(1+\nu_0(\eps,\sigma))(\det m)'_{\lambda}(\eps,\omega,\sigma,0)\\
	&=-c(\eps)+\frac{16}{3}\sigma^2+(\frac{4}{9}+\frac{8}{9}\cdot\frac{16}{3})\sigma^4+(-\frac{1342}{3^5}+\frac{8}{9}\cdot\frac{4}{9}+\frac{338}{81}\cdot\frac{16}{3}-\frac{4}{27}\cdot\frac{64}{9})\sigma^6\\
	&+(\frac{40}{81}-\frac{80}{9}\cdot\frac{16}{3})\omega\sigma^4\eps+(\frac{4}{9}-\frac{8}{9})c(\eps)\sigma^2+(-p_1-p_2+\frac{768}{27}\omega^2+\frac{32}{9}\cdot\frac{16}{3}\omega^2)\sigma^2\eps^2\\
	&+O\big(\sigma^7+\sigma^5\eps+\sigma^3\eps^2+\sigma^2\eps^3+\sigma\eps^4\big).\\	 	
	\end{aligned}
	$$

	Therefore, the eigenvalue problems boils down to the second order polynomial
	
	\begin{align}\lb{pol2}
	\begin{split}
	&\lambda^2+\big(-c(\eps)+\frac{16}{3}\sigma^2+\frac{140}{27}\sigma^4+\frac{434}{27}\sigma^6-\frac{3800}{81}\omega\sigma^4\eps-\frac{4}{9}c(\eps)\sigma^2+(-p_1-p_2+\frac{1280}{27}\omega^2)\sigma^2\eps^2\\
	&+O\big(\sigma^7+\sigma^5\eps+\sigma^3\eps^2+\sigma^2\eps^3+\sigma\eps^4\big)\big)\lambda+(-c(\eps)+\frac{8}{3}\sigma^2)\frac{8}{3}\sigma^2+\frac{32}{3}\sigma^6+\frac{24692}{3^6}\sigma^8\\
	&+\frac{32^2}{27}\omega\sigma^4\eps+\frac{1216}{3^5}\omega\sigma^6\eps-\frac{34}{9}c(\eps)\sigma^4-(\frac{32}{3}\omega\sigma\eps)^2+(-\frac{8}{3}(p_1+p_2)-\frac{2^{12}\cdot5}{3^5}\omega^2+\frac{320}{3^5})\sigma^4\eps^2\\
	&+\mathcal O\big(\sigma^{9}+\sigma^7\eps+\sigma^5\eps^2+\sigma^4\eps^3+\sigma^2\eps^4\big)=0,
	\end{split}
	\end{align}

	yielding the roots
	\be\label{quad2}
	\begin{split}
		\lambda_{1,2}=&\frac{\tilde c(\eps,\sigma)
			\pm\sqrt{c^2(\eps)+\frac{32^2}{9}\sigma^2(\frac{1}{3}\sigma^2-2\omega\eps)^2+\frac{2^8}{3^5}\sigma^4(\frac{179}{3}\sigma^4-494\omega\sigma^2\eps+800\omega^2\eps^2-5\eps^2)+
			}}{2}\\
			&\frac{\overline{\mathcal O\big(\sigma^{9}+\sigma^7\eps+\sigma^5\eps^2+\sigma^4\eps^3+\sigma^2\eps^4+\sigma\eps^6\big)}}{}.
		\end{split}
		\ee
		Next, let us consider different ranges for $\omega$.\\
		Case 1. Assume that $\omega\in(-\frac{1}{4},0]$. It follows from \eqref{quad2} that the discriminant in nonnegative for small enough values of $\sigma$ and $\eps$ which implies that $\lambda_1$ and $\lambda_2$ are real.\\
		Case 2. Assume that $\omega=-\frac{1}{4}$. Then $c(\eps)=0$. Moreover, it follows from \eqref{pol2} that the error term in the discriminant is of the form  $O\big(\sigma^{9}+\sigma^7\eps+\sigma^5\eps^2+\sigma^4\eps^3+\sigma^2\eps^4\big)$ i.e. $\mathcal O(\sigma\eps^6\big)=0$. Therefore, the discriminant is nonnegative for small enough values of $\sigma$ and $\eps$ which implies that $\lambda_1$ and $\lambda_2$ are real.\\
		Case 3. Assume that $\omega\in(0,\frac{1}{4}]$. We use the following ansatz for $\sigma$, i.e. $\sigma^2=6\omega\eps$. Then 
			\be\label{quad3}
			\begin{split}
				\lambda_{1,2}=&\frac{\tilde c(\eps,\sigma)
					\pm\sqrt{-\frac{2^{12}}{3^3}\eps^4(\omega^2-\omega_1)(\omega^2-\omega_2)+\mathcal O(\eps^5)
					}}{2},
				\end{split}
				\ee
				where $\omega_1\approx0.007$, $\omega_2\approx-1.63$. Therefore, for $\omega\in(\sqrt{\omega_1},\frac{1}{4}]$, $\lambda_1$ and $\lambda_2$ are not real.
			\end{proof}


\section{Comparison with Ginzburg Landau approximation}\label{s:GLcompare}
In this section we study in more detail the various operations in the Ginzburg Landau expansion, 
showing that, after natural preconditioning passes on each side, these can be matched step by step
with those of the exact Lyapunov-Schmidt reduction procedure.
This gives a deeper explanation why the two procedures give the same expansion to their common order of approximation.
In the process, we carry out the proof of Theorem \ref{dispthm}.

\subsection{Ginzburg Landau derivation through multiscale expansion}\label{s:GLexp}
We start by deriving in detail the Ginzburg Landau equation as a modulation equation of the 
Brusselator model \eqref{New Brusselator model}, i.e.
\be \label{sec3}
\begin{split}
	\partial_t U & = D\partial_x^2U +\bp 3+\eps^2 & 4 \\ -4+\eps^2 & -4 \ep U +\bp(\frac{\eps^2}{2}+2)u_1^2 + 4u_1u_2+ u_1^2 u_2\\-(\frac{\eps^2}{2}+2)u_1^2 - 4u_1u_2- u_1^2 u_2\ep.
\end{split}
\ee

The derivation is based on the ansatz
\be 
\begin{split}
	& u(t,x)\approx U_A(\hat t,\hat x)=\frac{1}{2}\eps A(\hat t,\hat x)e^{i\frac{1}{2}x}\bp 2 \\ -1 \ep+c.c.+\eps^2\Psi_0(\hat t,\hat x)\\&+\frac{1}{2}\eps^2(e^{i\frac{1}{2}x}\Psi_1(\hat t,\hat x)+e^{ix}\Psi_2(\hat t,\hat x)+c.c.)+\frac{1}{2}\eps^3e^{i\frac{1}{2}x}\Psi_3(\hat t,\hat x)+c.c.+h.o.t.,
\end{split}
\ee
where $(\hat t,\hat x)=(\eps^2 t,\eps x)$.

Substituting this ansatz into \eqref{sec3} and collecting terms of the form $\eps^{j_1}e^{i\frac{1}{2}{j_2}x}$, we arrive at the equations:
\be 
\begin{split}\label{steps}
 \eps e^{i\frac{1}{2}x}:\quad 0=	& \frac{1}{2}(-\frac{1}{4}AD+A\bp 3 & 4 \\ -4 & -4 \ep)\bp 2 \\ -1 \ep,\\
	\eps^2 :\quad 0=	&\bp 3 & 4 \\ -4 & -4 \ep\Psi_0+|A|^2(2\cdot 2e^{i\frac{1}{2}x}e^{-i\frac{1}{2}x}+4\cdot2\cdot(-\frac{1}{2}))\bp 1 \\ -1 \ep,\\
	\eps^2e^{i\frac{1}{2}x} :\quad 0=&\frac{1}{2}\big\{-\frac{1}{4}D\Psi_1+\bp 3 & 4 \\ -4 & -4 \ep\Psi_1+i\partial_{\hat x}A \bp 8 \\ -16  \ep\big\},\\
	\eps^2e^{ix} :\quad 0=	&\frac{1}{2}\big\{-D\Psi_2+\bp 3 & 4 \\ -4 & -4 \ep\Psi_2\big\}+A^2 (2+4\cdot(-\frac{1}{2}))\bp 1 \\ -1  \ep.\\
\end{split}
\ee

Therefore, $\Psi_0=\Psi_2=0$ and $\Psi_1=c\bp 2 \\ -1  \ep+\frac{4}{3}i\partial_{\hat x}A\bp 1 \\ -2  \ep$.\\
We also arrive at the compatibility condition:
 \be 
 \begin{split}
 	\eps^3 e^{i\frac{1}{2}x}:\quad \frac{1}{2}\partial_{\hat t}A\bp 2 \\ -1 \ep=	& \frac{1}{2}\big\{D\partial^2_{\hat x}A\bp 2 \\ -1 \ep+A\bp 2 \\ -2 \ep+iD\partial_{\hat x}\Psi_1-\frac{1}{4}D\Psi_3+\bp 3 & 4 \\ -4 & -4 \ep\Psi_3\big\}\\
 	&+(-\frac{1}{2}\cdot 2e^{i\frac{1}{2}x}e^{-i\frac{1}{2}x}-\frac{1}{2}e^{ix}e^{-i\frac{1}{2}x})|A|^2A\bp 1 \\ -1 \ep,
 \end{split}
 \ee
or
  \be \label{e^3}
  \begin{split}
  	\eps^3 e^{i\frac{1}{2}x}:\quad \bp 2\psi_3+4\tilde\psi_3 \\ -4\psi_3-8\tilde\psi_3 \ep=	& \partial_{\hat t}A\bp 2 \\ -1 \ep-\partial^2_{\hat x}A\bp 8 \\ -16 \ep-A\bp 2 \\ -2 \ep-i\partial_{\hat x}c\bp 8 \\ -16 \ep\\
  	&+\frac{4}{3}\partial^2_{\hat x}A\bp 4 \\ -32 \ep+3|A|^2A\bp 1 \\ -1 \ep.
  \end{split}
  \ee
  Therefore, we have the compatibility condition
  \be 
    \begin{split}\label{com}
    	0=	& 3\partial_{\hat t}A-2A-32\partial^2_{\hat x}A+3|A|^2A.
    \end{split}
    \ee
    Hence, we arrive at the Ginzburg Landau equation:
    \be 
        \begin{split}\label{GL1}
        	\partial_{\hat t}A=	& \frac{32}{3}\partial^2_{\hat x}A+\frac{2}{3}A-|A|^2A.
        \end{split}
        \ee
        
        Note that \eqref{GL1} has the explicit solution:
        \be 
        \begin{split}\label{GL2}
        	A_{\omega}(\hat x)=\sqrt{\frac{2}{3}(1-16\omega^2)}e^{i\omega\hat x},\,\,\omega\in[-\frac{1}{4},\frac{1}{4}].
        \end{split}
        \ee
        Hence, if we go through the steps in formulas \eqref{steps}-\eqref{GL2}, replace $\partial_{\hat x}$ by $i\omega$ and ignore $\hat t $ dependence of $A$, we will arrive at the equation for $A$:
$$
        	-\frac{32}{3}\omega^2A+\frac{2}{3}A-|A|^2A=0.
$$

\subsection{Existence: exact theory vs. Ginzburg-Landau approximation}
Now that we know the precise scaling in the existence part we use the following ansatz to go through the existence steps and compare them to the steps of the Ginzburg Landau derivation:
$$
 u=\cos \xi\bp 2 \\ -1 \ep\alpha\eps +V(\eps,\alpha).
$$
We substitute this ansatz into the equation
\be 
\begin{split}\label{exist}
& (I-Q)N(b(\eps),k(\eps,\omega), u)=0.
\end{split}
\ee

1.) First of all, it is clear that $V(0,\alpha)=0$.\\
2.) Now, we differentiate  equation \eqref{exist} with respect to $\eps$.
\be\label{eps}
\begin{split}
	& \partial_{\eps}(I-Q)N(b(\eps),k(\eps), \alpha\eps U_1+ V)=(I-Q)[((\omega+\mathcal O(\eps))D\partial_\xi^2+ 2\eps\bp 1 & 0 \\ -1 & 0 \ep)(\alpha\eps U_1 +V)\\&+((\frac{1}{4}+\mathcal O(\eps))D\partial_\xi^2+ A)(\alpha U_1+\partial_{\eps}V)\\&+  \bp (\eps^2+4) u_1+ 4 u_2+2 u_1  u_2  & 4 u_1+ u_1^2 \\ -(\eps+4) u_1- 4 u_2-2 u_1  u_2  & -4 u_1- u_1^2 \ep (\alpha U_1+\partial_{\eps}V)]=0,
\end{split}
\ee
where $\bp u_1 \\ u_2 \ep=\alpha\eps U_1 + V$.\\
Hence, by step 1.),
\be
\begin{split}
 (I-Q)L_{per}\partial_{\eps}V|_{\eps=0}=-\alpha(I-Q)L_{per}U_1=0.
 \end{split}
 \ee
 Since $(I-Q)L_{per}(I-P)$ is invertible and $\partial_{\eps}V|_{\eps=0}\in\ran(I-P)$, $\partial_{\eps}V|_{\eps=0}=0$.

3.) Next, we would like to compute $\partial^2_{\eps}V|_{\eps=0}$.  Differentiating \eqref{eps} with respect to $\eps$, we obtain
%
\be\label{eps2}
\begin{split}
	& \partial^2_{\eps}(I-Q)N(b(\eps),k(\eps), \alpha\eps U_1+ V)=(I-Q)[((4\omega^2+\mathcal O(\eps))D\partial_\xi^2+ 2\bp 1 & 0 \\ -1 & 0 \ep)(\alpha\eps U_1 +V)\\
	&+2((\omega+\mathcal O(\eps))D\partial_\xi^2+ 2\eps\bp 1 & 0 \\ -1 & 0 \ep)(\alpha U_1 +\partial_{\eps}V)\\
	&+((\frac{1}{4}+\mathcal O(\eps))D\partial_\xi^2+ A)\partial^2_{\eps}V+2\eps\bp  u_1  & 0 \\ - u_1  & 0 \ep \partial^2_{\eps}V
	\\&+  \bp (\eps^2+4)+2 u_2  & 0 \\ -(\eps^2+4) -2   u_2  & 0 \ep (\alpha U_1+\partial_{\eps}V)^{\circ2}+ \bp 4 +2 u_1  & 4 + 2u_1 \\ - 4 -2 u_1  & -4 - 2u_1 \ep \left\{(\alpha\hat U_1+\partial_{\eps}\hat V)\circ(\alpha U_1+\partial_{\eps}V)\right\}\\&+  \bp (\eps^2+4) u_1+ 4 u_2+2 u_1  u_2  & 4 u_1+ u_1^2 \\ -(\eps+4) u_1- 4 u_2-2 u_1  u_2  & -4 u_1- u_1^2 \ep \partial^2_{\eps}V]=0,
\end{split}
\ee
where $\circ$ indicates the Hadamard product sign.
Therefore,
\be\label{eps^2}
\begin{split}
	& \partial^2_{\eps}(I-Q)N(b(\eps),k(\eps), \alpha\eps U_1+ V)|_{\eps=0}=(I-Q)[2\omega\alpha D\partial_\xi^2 U_1+L_{per}\partial^2_{\eps}V|_{\eps=0}\\&+  \bp 4 & 0 \\ -4   & 0 \ep (\alpha U_1)^{\circ2}+ \bp 4   & 4  \\ - 4   & -4  \ep \left\{(\alpha\hat U_1)\circ(\alpha U_1)\right\}]=0.
\end{split}
\ee

Notice that
\be
\begin{split}
	&(\alpha U_1)^{\circ2}=\alpha^2\bp 4 \\ 1 \ep \cos^2 \xi,\\
	&(\alpha\hat U_1)\circ(\alpha U_1)=-2\alpha^2\bp 1 \\ 1 \ep \cos^2 \xi.\\
\end{split}
\ee

It is easy to see that 
\be
Q \Big[DU_1 \Big] = 0,\quad Q \Big[\bp 1 \\ -1 \ep \Big] = 0, \quad Q \Big[\bp 1 \\ -1 \ep \cos 2\xi \Big] = 0.
\ee
Therefore, using \eqref{eps^2}, we obtain
\ba \label{formula1^2}
	(I-Q)L_{per}\partial^2_{\eps}V|_{\eps=0}& =2\omega\alpha \bp 8 \\ -16  \ep\cos \xi \\
&\quad -8\alpha^2\bp 1 \\ -1  \ep \cos 2\xi-8\alpha^2\bp 1 \\ -1  \ep+8\alpha^2\bp 1 \\ -1  \ep \cos 2\xi+8\alpha^2\bp 1 \\ -1  \ep.
\ea
Since $\mathbb{R}^2$, $\mathbb{R}^2\cos \xi$ and $\mathbb{R}^2\cos 2\xi$ are invariant subspaces for the invertible operator $(I-Q)L_{per}(I-P)$, $\frac{1}{2}\partial^2_{\eps}V|_{\eps=0}$ is of the form $\Psi_0 +\Psi_1 \cos \xi+\Psi_2 \cos 2\xi$. Notice that $\Psi_1$ should be of the form $\phi_1\bp 1 \\ -2  \ep$,
and have the equations for $\Psi_j$:
$$
\begin{aligned}
	\quad 0=	&\bp 3 & 4 \\ -4 & -4 \ep\Psi_0+4\alpha^2\bp 1 \\ -1  \ep-4\alpha^2\bp 1 \\ -1  \ep,\\
	\quad 0=	&-\frac{1}{4}D\Psi_1+\bp 3 & 4 \\ -4 & -4 \ep\Psi_1-\omega\alpha \bp 8 \\ -16  \ep,\\
	\quad 0=	&-D\Psi_2+\bp 3 & 4 \\ -4 & -4 \ep\Psi_2+4\alpha^2\bp 1 \\ -1  \ep-4\alpha^2\bp 1 \\ -1  \ep.
\end{aligned}
$$
Therefore, $\Psi_0=\Psi_2=0$ and $\Psi_1=-\frac{4}{3}\omega\alpha\bp 1 \\ -2  \ep$.

So far, we have shown that 
\be
\begin{split}\label{uu}
	&u=\alpha\cos \xi\bp 2 \\ -1 \ep\eps -\frac{4}{3}\omega\alpha\cos \xi\bp 1 \\ -2  \ep\eps^2+\mathcal O(\eps^3).
\end{split}
\ee
Also, notice that in formula \eqref{uu} $\mathcal O(\eps^3)=\mathcal O(\a\eps^3)$. Then
\be
\begin{split}\label{u}
	&u=\alpha\cos \xi\bp 2 \\ -1 \ep\eps -\frac{4}{3}\omega\alpha\cos \xi\bp 1 \\ -2  \ep\eps^2+\mathcal O(\a\eps^3).
\end{split}
\ee

In order to obtain the reduced equation, we plug \eqref{u} into the first equation from \eqref{projection equations}, obtaining
for $\bp u_1 \\ u_2 \ep=u$:
$$
\begin{aligned}
& \tilde QN(b(\eps),k(\eps), u)=\tilde Q[(k^2(\eps)D\partial_\xi^2+ A)u\\&+  \bp  (\frac{\eps^2}{2}+2)u_1^2 + 4u_1u_2+ u_1^2 u_2  \\    -(\frac{\eps^2}{2}+2)u_1^2 - 4u_1u_2- u_1^2 u_2  \ep ]=0.
\end{aligned}
$$

a.) {\bf Linear part.} By direct calculation, we have:
$$
D\bp 2 \\ -1 \ep = 8 \bp 1 \\ -2 \ep , \quad A\bp 2 \\ -1 \ep = \bp 2\eps^2+2 \\ -2\eps^2-4 \ep,
$$
$$
Q  k^2(\eps)D\partial_\xi(  \a\eps U_1) =0, \quad Q A ( \a\eps U_1 ) = \frac{2\eps^3\a}{3} \bp 2 \\ -1 \ep \cos \xi ,
$$
$$
Q\bp 1 \\ -2 \ep =  \bp 4 \\ -32 \ep , \quad A\bp 1 \\ -2 \ep = \bp \eps^2-5 \\ -\eps^2+4 \ep,
$$
$$
Q\Big[D\bp 1 \\ -2 \ep \cos \xi \Big] =  -8   \bp 2 \\ -1 \ep\cos \xi,\,\,\,
Q \Big[A\bp 1 \\ -2 \ep \cos \xi \Big] = \frac{\eps^2-6}{3}  \bp 2 \\ -1 \ep\cos \xi.
$$

Therefore,
\be
\begin{split}
& \tilde Q(k^2(\eps)D\partial_\xi^2+ A)(\a U_1)= \frac{2\a}{3}\eps^3 +( 8(\frac{1}{4}+\omega\eps)+\frac{-6}{3})(-\frac{4}{3}\omega\alpha\eps^2) +\mathcal O(\a\eps^4).
\end{split}
\ee

b.) {\bf Non-linear part.}
By direct calculation, we have:
$$
\begin{aligned}
&\tilde Q\bp  (\frac{\eps^2}{2}+2)u_1^2 + 4u_1u_2+ u_1^2 u_2  \\    -(\frac{b}{\eps^2}+2)u_1^2 - 4u_2u_2- u_1^2 u_2  \ep =\tilde Q\big( ((\frac{\eps^2}{2}+2)u_1^2 + 4u_1u_2+ u_1^2 u_2)\bp 1 \\ -1 \ep\big)\\
&=-\a^3\eps^3+\mathcal O(\a\eps^4).
\end{aligned}
$$

Hence, we have the reduced equation:
\be
\begin{split}
\frac{2}{3}\a\eps^3-\frac{32}{3}\omega^2\alpha\eps^3 -\a^3\eps^3+\mathcal O(\a\eps^4)=0.
\end{split}
\ee
It is equivalent to 
\be
\begin{split}
	-\frac{32}{3}\omega^2\alpha+\frac{2}{3}\a -\a^3+\mathcal O(\a\eps)=0.
\end{split}
\ee

Note that, under the imposed scaling, the computations of the reduced (existence) equation derived by Lyapunov--Schmidt
reduction agree at each order with derived by formal Ginzburg Landau approximation.

\subsection{Stability: exact vs. Ginzburg-Landau linearized dispersion relations}\label{s:ccred}
Next, we derive the linearized dispersion relations for the Ginzburg Landau equation
\be 
\begin{split}
	\partial_{\hat t}A=	& \frac{32}{3}\partial^2_{\hat x}A+\frac{2}{3}A-|A|^2A.
\end{split}
\ee
In order to study the linearized stability of $A_{\omega}(\hat x)=\sqrt{\frac{2}{3}(1-16\omega^2)}e^{i\omega\hat x},\,\,\omega\in[-\frac{1}{4},\frac{1}{4}]$ we use the ansatz $A=A_{\omega}+e^{i\omega\hat x}(b_r-ib_i)$ and linearize in $(b_r,b_i)$. Therefore, we obtain
\be 
\begin{split}\label{spect}
	\partial_{\hat t}\bp b_r \\ b_i  \ep=	& \bp \frac{32}{3}\partial^2_{\hat x}-\frac{4}{3}(1-16\omega^2) & \frac{64}{3}\omega\partial_{\hat x}\\ -\frac{64}{3}\omega\partial_{\hat x}& \frac{32}{3}\partial^2_{\hat x} \ep\bp b_r \\ b_i  \ep.
\end{split}
\ee

Now we would like to derive equation \eqref{spect} using linearized equation 
\be
\begin{split}\label{B1}
\partial_{\hat t}B
	& =D\partial^2_{\hat x}B+\bp \eps^2+3 & 4 \\ -(\eps^2+4) & -4 \ep +   \bp (\eps^2+4)\tilde u_1+ 4\tilde u_2+2\tilde u_1 \tilde u_2  & 4\tilde u_1+\tilde u_1^2 \\ -(\eps^2+4)\tilde u_1- 4\tilde u_2-2\tilde u_1 \tilde u_2  & -4\tilde u_1-\tilde u_1^2 \ep B,
\end{split}
\ee
where

\be 
\begin{split}
&\bp	\tilde u_1\\ \tilde u_2\ep=U_A(\hat t,\hat x)=\frac{1}{2}\eps A_{\omega}(\hat x)e^{i\frac{1}{2}x}\bp 2 \\ -1 \ep+c.c.+\frac{1}{2}\eps^2e^{i\frac{1}{2}x}\Psi_1(\hat x)+c.c.+h.o.t.,\\
&\Psi_1(\hat x)=-\frac{4}{3}\omega A_{\omega}(\hat x)\bp 1 \\ -2  \ep,
\end{split}
\ee
and the ansatz
\be 
\begin{split}
	&\B(\hat t,\hat x)=b_re^{i(\omega\hat x+\frac{1}{2}x)}\bp 2 \\ -1 \ep-ib_ie^{i(\omega\hat x+\frac{1}{2}x)}\bp 2 \\ -1 \ep+c.c.+\eps e^{i(\omega\hat x+\frac{1}{2}x)}(\Phi_1(\hat t,\hat x)-i\tilde\Phi_1(\hat t,\hat x)+c.c.)\\
	&\eps^2 e^{i(\omega\hat x+\frac{1}{2}x)}(\Phi_2(\hat t,\hat x)-i\tilde\Phi_2(\hat t,\hat x)+c.c.)+h.o.t.
\end{split}
\ee
	
Substituting this ansatz into \eqref{B} and collecting terms of the form $(-i)^{j_1}\eps^{j_2}e^{i(\omega\hat x+\frac{1}{2}x)}$, we arrive at the equations:

\be 
\begin{split}
	e^{i(\omega\hat x+\frac{1}{2}x)}:\quad 0=	& \big[-\frac{1}{4}b_rD+b_r\bp 3 & 4 \\ -4 & -4 \ep\big]\bp 2 \\ -1 \ep,\\
	-ie^{i(\omega\hat x+\frac{1}{2}x)}:\quad 0=	& \big[-\frac{1}{4}b_iD+b_i\bp 3 & 4 \\ -4 & -4 \ep\big]\bp 2 \\ -1 \ep,\\
	\eps e^{i(\omega\hat x+\frac{1}{2}x)} :\quad 0=&-\omega b_rD \bp 2 \\ -1  \ep+\partial_{\hat x}b_iD \bp 2 \\ -1  \ep-\frac{1}{4}D\Phi_1+\bp 3 & 4 \\ -4 & -4 \ep\Phi_1,\\
	-i\eps e^{i(\omega\hat x+\frac{1}{2}x)} :\quad 0=&-\partial_{\hat x} b_rD \bp 2 \\ -1  \ep-\omega b_iD \bp 2 \\ -1  \ep-\frac{1}{4}D\tilde\Phi_1+\bp 3 & 4 \\ -4 & -4 \ep\tilde\Phi_1.\\
\end{split}
\ee

Therefore, $\Phi_1=c\bp 2 \\ -1  \ep+\frac{4}{3}(-\omega b_r+\partial_{\hat x}b_i)\bp 1 \\ -2  \ep$ and $\tilde\Phi_1=\tilde c\bp 2 \\ -1  \ep+\frac{4}{3}(-\partial_{\hat x} b_r-\omega b_i)\bp 1 \\ -2  \ep$.\\
We also arrive at the compatibility conditions:
\be 
\begin{split}
	\eps^2 e^{i(\omega\hat x+\frac{1}{2}x)}:\quad& \partial_{\hat t}b_r\bp 2 \\ -1 \ep=	 (\partial^2_{\hat x}b_r-\omega^2 b_r+2\omega \partial_{\hat x}b_i)D\bp 2 \\ -1 \ep+b_r\bp 2 \\ -2 \ep-\omega D\Phi_1+D\partial_{\hat x}\tilde\Phi_1\\
	&-\frac{1}{4}D\Phi_2+\bp 3 & 4 \\ -4 & -4 \ep\Phi_2+(2(-\frac{1}{2}-\frac{1}{2}-\frac{1}{2})2b_r+(1+2)(-b_r))|A_{\omega}|^2\bp 1 \\ -1 \ep,\\
	-i\eps^2 e^{i(\omega\hat x+\frac{1}{2}x)}:\quad& \partial_{\hat t}b_i\bp 2 \\ -1 \ep=	 (\partial^2_{\hat x}b_i-\omega^2 b_i-2\omega \partial_{\hat x}b_r)D\bp 2 \\ -1 \ep+b_i\bp 2 \\ -2 \ep-D\partial_{\hat x}\Phi_1-\omega D\tilde\Phi_1\\
		&-\frac{1}{4}D\tilde\Phi_2+\bp 3 & 4 \\ -4 & -4 \ep\tilde\Phi_2+(2(-\frac{1}{2}-\frac{1}{2}+\frac{1}{2})2b_i+(-1+2)(-b_i))|A_{\omega}|^2\bp 1 \\ -1 \ep.
\end{split}
\ee

Similar to \eqref{e^3}-\eqref{com}, we have the compatibility conditions:
\be 
\begin{split}
	0=	& -3\partial_{\hat t}b_r+2b_r-\omega(-24)\frac{4}{3}(-\omega b_r+\partial_{\hat x}b_i)+(-24)\frac{4}{3}(-\partial^2_{\hat x} b_r-\omega \partial_{\hat x}b_i)-9|A_{\omega}|^2b_r,\\
	0=	& -3\partial_{\hat t}b_i+2b_i-(-24)\frac{4}{3}(-\omega \partial_{\hat x}b_r+\partial^2_{\hat x}b_i)-\omega(-24)\frac{4}{3}(-\partial_{\hat x} b_r-\omega b_i)-3|A_{\omega}|^2b_i.
\end{split}
\ee
Finally, we arrive at
\be 
\begin{split}
	\partial_{\hat t}\bp b_r \\ b_i  \ep=	& \bp \frac{32}{3}\partial^2_{\hat x}-\frac{4}{3}(1-16\omega^2) & \frac{64}{3}\omega\partial_{\hat x}\\ -\frac{64}{3}\omega\partial_{\hat x}& \frac{32}{3}\partial^2_{\hat x} \ep\bp b_r \\ b_i  \ep.
\end{split}
\ee
Hence, the spectral matrix of the linearized operator is of the form
\be\label{gldisp}
\begin{split}
	0
	&=\bp -\frac{4}{3}(1-16\omega^2)-\frac{32}{3}\hat\sigma^2-\hat\lambda & \frac{64}{3}i\omega\hat\sigma \\ -\frac{64}{3}i\omega\hat\sigma & -\frac{32}{3}\hat\sigma^2-\hat\lambda \ep\bp \b_1 \\ \b_2\ep.
\end{split}
\ee

Then, for $|\hat\sigma|<<1$
	\be\label{eGL}
	 		 		 	\begin{split}
	 		 		 	&\hat\lambda_{1}(\hat\sigma)=-\frac{4}{3}(1-16\omega^2)-\frac{32(1+16\omega^2)}{3(1-16\omega^2)}\hat\sigma^2+\mathcal O(\hat\sigma^3),\\
	 		 		 	&\hat\lambda_{2}(
	 		 		 	\hat\sigma)=-\frac{32(1-48\omega^2)}{3(1-16\omega^2)}\hat\sigma^2+\mathcal O(\hat\sigma^3).
	 		 		 		\end{split}
	 		 		 			\ee

Note that, as in the existence part, the derivation by rigorous Lyapunov-Schmidt reduction, in the Ginzburg Landau scaling, 
agrees at each step/order with that by formal Ginzburg Landau approximation as can be seen below.

Now that we know the explicit form of solution in the existence part we use the scaling $\lambda=\eps^2\hat\lambda, \,\sigma=2\eps\hat \sigma$ to go through the spectral matrix steps and compare them to the steps in section 5.\\
We now consider the eigenvalue problem of $B(\eps,\omega,\sigma)$:
\be
0=\Big[ B(\eps,\omega,2\hat\sigma\eps)-\hat\l\eps^2 I \Big] W.
\ee
In order to use the Lyapunov-Schmidt reduction, we decompose $W = \b_1 U_{1}+\b_{2} U_{2} + \mathcal V$ and we first solve
\be\label{scaling}
\begin{split}
	0
	& =(I-Q)\Big[ B(\eps,\omega,2\hat\sigma\eps)-\hat\l\eps^2 I\Big] (\b_1 U_{1}+\b_{2} U_{2} + \mathcal V),
\end{split}
\ee
It is clear that the relation between $\b$ and $\mathcal V$ is linear. Then, let $\mathcal V=\mathcal V_1\b_1+\mathcal V_2\b_2$. Now let us find asymptotic expansions of $\mathcal V_1$ and $\mathcal V_2$ with respect to parameter $\eps$.\\
1.) First, we compute $\mathcal V_i|_{\eps=0}$. \\
\be
\begin{split}
	0
	& =(I-Q) B(0,\omega,0) (\b_1 (U_{1}+\mathcal V_1|_{\eps=0})+\b_{2} (U_{2}+\mathcal V_i|_{\eps=0})).
\end{split}
\ee

Since $(I-Q) B(0,\omega,0) (\b_1 U_{1}+\b_{2} U_{2})=0$ and $(I-Q) B(0,\omega,0)(I-P)$ is invertible, $\mathcal V_i|_{\eps=0}=0$.

2.) Now, we differentiate  equation \eqref{scaling} with respect to $\eps$.
\be
\begin{split}
	0
	& =(I-Q)\big\{\partial_{\eps}B(0,\omega,0) (\b_1 U_{1}+\b_{2} U_{2})+ B(0,\omega,0)(\b_1 \partial_{\eps}\mathcal V_1|_{\eps=0}+\b_{2} \partial_{\eps}\mathcal V_i|_{\eps=0})\big\}.
\end{split}
\ee
Or,
\be\label{derPhi}
\begin{split}
	0
	& =(I-Q)\big\{[-\omega(\b_1 DU_{1}+\b_{2}D U_{2})]+i\hat\sigma(-\b_1 DU_{2}+\b_{2}D U_{1})+ L_{per}(\b_1 \partial_{\eps}\mathcal V_1|_{\eps=0}+\b_{2} \partial_{\eps}\mathcal V_i|_{\eps=0})\big\}\\
	&=-\omega(\b_1 DU_{1}+\b_{2}D U_{2})+i\hat\sigma(-\b_1 DU_{2}+\b_{2}D U_{1})+(I-Q)L_{per}(\b_1 \partial_{\eps}\mathcal V_1|_{\eps=0}+\b_{2} \partial_{\eps}\mathcal V_i|_{\eps=0}).
\end{split}
\ee
Therefore,

\be\label{Phi}
\begin{split}
	&\b_1 \partial_{\eps}\mathcal V_1|_{\eps=0}+\b_{2} \partial_{\eps}\mathcal V_i|_{\eps=0}= \Phi_1  \cos \xi+\tilde\Phi_1  \sin \xi,
\end{split}
\ee
where $\Phi_1=\phi_1\bp 1 \\ -2 \ep$ and $\tilde\Phi_1=\tilde\phi_1\bp 1 \\ -2 \ep$.
Substituting \eqref{Phi} into \eqref{derPhi} leads to
\be
\begin{split}
	0
	&=-\omega(\b_1 DU_{1}+\b_{2}D U_{2})+i\hat\sigma(-\b_1 DU_{2}+\b_{2}D U_{1})+\big[-\frac{1}{4}D+\bp 3 & 4 \\ -4 & -4 \ep\big](\Phi_1  \cos \xi+\tilde\Phi_1  \sin \xi).
\end{split}
\ee
Therefore, we have the following two equations:
\be 
\begin{split}
	0=&-\omega \b_1D \bp 2 \\ -1  \ep+i\hat\sigma\b_2D \bp 2 \\ -1  \ep-\frac{1}{4}D\Phi_1+\bp 3 & 4 \\ -4 & -4 \ep\Phi_1,\\
	0=&-i\hat\sigma \b_1D \bp 2 \\ -1  \ep-\omega \b_2D \bp 2 \\ -1  \ep-\frac{1}{4}D\tilde\Phi_1+\bp 3 & 4 \\ -4 & -4 \ep\tilde\Phi_1.\\
\end{split}
\ee

Therefore, $\Phi_1=\frac{4}{3}(-\omega \b_1+i\hat\sigma\b_2)\bp 1 \\ -2  \ep$ and $\tilde\Phi_1=\frac{4}{3}(-i\hat\sigma \b_1-\omega \b_2)\bp 1 \\ -2  \ep$.\\

Hence, $W = \b_1 U_{1}+\b_{2} U_{2} + \mathcal V=\b_1 U_{1}+\frac{4}{3}\eps(-\omega \b_1+i\hat\sigma\b_2)\bp 1 \\ -2  \ep+\b_{2} U_{2} +\frac{4}{3}\eps(-i\hat\sigma \b_1-\omega \b_2)\bp 1 \\ -2  \ep+\mathcal O(\eps^2)$. Next, we plug $W$ into 
\be
\begin{split}
	0
	& =\tilde Q\Big[ B(\eps,\omega,2\hat\sigma\eps)-\hat\l\eps^2 I\Big] W.
\end{split}
\ee
Then we arrive at
\be 
\begin{split}
	0=	& -\hat\lambda\eps^2\b_1+\frac{2}{3}\eps^2\b_1-\omega(-8)\frac{4}{3}\eps^2(-\omega \b_1+i\hat\sigma\b_2)+(-8i\hat\sigma)\frac{4}{3}\eps^2(-i\hat\sigma \b_1-\omega \b_2)-3\eps^2\frac{2}{3}(1-16\omega^2)\b_1,\\
	0=	& -\hat\lambda\eps^2\b_2+\frac{2}{3}\eps^2\b_2-(-8i\hat\sigma)\frac{4}{3}\eps^2(-\omega \b_1+i\hat\sigma\b_2)-(-8)\frac{4}{3}\eps^2(-i\hat\sigma \b_1-\omega \b_2)-\eps^2\frac{2}{3}(1-16\omega^2)\b_2.
\end{split}
\ee
Or,
\be
\begin{split}
	0
	&=\bp -\frac{4}{3}(1-16\omega^2)-\frac{32}{3}\hat\sigma^2-\hat\lambda & \frac{64}{3}i\omega\hat\sigma \\ -\frac{64}{3}i\omega\hat\sigma & -\frac{32}{3}\hat\sigma^2-\hat\lambda \ep\bp \b_1 \\ \b_2 \ep\eps^2+O(\eps^3).
\end{split}
\ee

This yields in passing agreement of the critical linearized dispersion relations.

\begin{proof}[Proof of Theorem \ref{dispthm}]
	Note that the exact reduced spectral system given by \eqref{det} and Corollary \ref{errcorr}
	agrees after Ginzburg Landau scaling ($\sigma =:2\eps \hat \sigma$, $\lambda_j=:\eps^2 \hat \lambda_j$) to appropriate order with the matrix eigenvalue problem \eqref{gldisp}.
	Also, it follows from formulas \eqref{eBM} and \eqref{eGL} that the roots likewise agree to lowest order, 
	giving the first result, \eqref{exactdisp}.
	The second result, reality of $\lambda_j$ for $|\hat\sigma|\leq C$, $16-\omega^2>0$, and $\eps>0$ sufficiently small,
	follows from \eqref{sqrtexp} by the observation that the argument of the square root is dominated
	by $\tilde c>0$ for $\eps$ sufficiently small.
\end{proof}


\end{document}